\definecolor{RED}{rgb}{1,0,0}
\setlist[itemize,1]{label=$\circ$}
\newtheorem{thm}{Theorem}[section]
\newtheorem{cor}[thm]{Corollary}
\newtheorem{lem}[thm]{Lemma}
\newtheorem{prop}[thm]{Proposition}
\theoremstyle{remark}
\newtheorem{rem}[thm]{Remark}
\theoremstyle{definition}
\newtheorem{defs}[thm]{Definition}
\theoremstyle{plain}
\newcommand{\thistheoremname}{}
\newtheorem{genericthm}[thm]{\thistheoremname}
  \newtheorem*{genericthm*}{\thistheoremname}
\newenvironment{namedthm*}[1]
  {\renewcommand{\thistheoremname}{#1}%
   \begin{genericthm*}}
  {\end{genericthm*}}
\newcommand{\mycomment}[1]{}
\author[C.\ Fougeron]{Charles Fougeron}
\address{LAGA - Université Sorbonne Paris Nord, 99 Av. Jean Baptiste Clément,
93430 Villetaneuse, France}
\email{charles.fougeron@math.cnrs.fr}
\author[S.\ Schmidhuber]{Sophie Schmidhuber}
\address{Institut f\"ur Mathematik, Universit\"at Z\"urich, Winterthurerstrasse 190,
CH-8057 Z\"urich, Switzerland}
\email{sophie.schmidhuber@math.uzh.ch}
\author[C.\ Ulcigrai]{Corinna Ulcigrai}
\address{Institut f\"ur Mathematik, Universit\"at Z\"urich, Winterthurerstrasse 190,
CH-8057 Z\"urich, Switzerland}
\email{corinna.ulcigrai@math.uzh.ch}
\title[Dynamical decomposition of GIETs]{Dynamical decomposition of generalized interval exchange transformations}
\begin{document}
\begin{abstract} We develop a renormalization scheme which extends the classical Rauzy-Veech induction used to study interval exchange tranformations (IETs) and allows to study generalized interval exchange transformation (GIETs) $T: [0,1) \to [0,1)$ with possibly more than one quasiminimal component (i.e.~not infinite-complete, or, equivalently, not semi-conjugated to a minimal IET). The renormalization is defined for more general maps that we call \textit{interval exchange transformations with gaps} (g-GIETs), namely~partially defined GIETs which appear naturally as the first return map of $C^r$-flows on two-dimensional manifolds to \textit{any} transversal segment. We exploit this renormalization scheme  to find a decomposition of $[0,1)$ into finite unions of intervals which either contain no recurrent orbits, or contain only recurrent orbits which are closed, or contain a unique quasiminimal.
 This provides an alternative approach to the decomposition results for foliations and flows on surfaces by Levitt, Gutierrez and Gardiner from the $1980s$.

\end{abstract}

\maketitle

\section{Introduction}
In this paper, we provide a contribution to the study of flows on surfaces as well as their Poincar{\'e} maps, namely \emph{generalized interval exchange transformations} (GIETs). We introduce a new  approach to some of the classical decomposition and structure results established by Levitt, Gutierrez and Gardiner in the 1980s, which allow to decompose a surface with a flow into subsurfaces that each contain at most one \emph{quasiminimal} component. Our  approach, which provides both alternative proofs as well as new decomposition results at the level of the Poincar{\'e} map, is based on a new extension of Rauzy-Veech induction, a powerful tool which was introduced in the $1980s$ and exploited in the last $40$ years to study interval exchange maps and orientable measured foliations. Section \S~ 1.1 provides a historical overview over some key results in the study of flows on one hand and interval exchange transformations on the other, in Section \S~\ref{sec:resultsintro} we present an overview of our results and in Section \S~ 1.3 we present an example decomposition of GIET called the \textit{Disco map}.

\subsection{A historical overview}\label{sec:history}
\subsubsection{Flows on surfaces} \label{sec:recurrenceintro}
In 1886, Poincaré became interested in the study of recurrent trajectories both in the plane and on the torus \cite{poincare}. In a statement whose proof was completed by Bendixson in 1901, Poincaré showed that there exist no \emph{non-trivially recurrent} trajectories in the plane, namely~trajectories which are \emph{recurrent} (i.e.~come back infinitely often arbitraritly close to the initial point) but \emph{non-trivial} in the sense that  they are not periodic, or equivalently not closed. However, such trajectories may exist on two-dimensional manifolds: the simplest example is given by the irrational flow on the torus. The closure of such a non-trivially recurrent trajectory is called a \textit{quasiminimal}. Poincaré distinguished between two types of non-trivially recurrent trajectories: those which are \emph{everywhere dense} in a subsurface and those which are \emph{nowhere dense}. He called the latter trajectories \emph{exceptional}, their closure is locally homeomorphic to the product of the Cantor set and a segment. Famous examples of flows with exceptional trajectories were given by Denjoy \cite{Denjoy} in 1932 and Cherry \cite{CherryFlow} in 1937 by modifying a countable number of orbits of the irrational flow on the torus (see also sections \S~ \ref{sec:Denjoyflow} and \S~ \ref{sec:Cherryflow}).

By the end of the 1930s Maier \cite{Maier} proved that every trajectory contained in a quasiminimal is dense therein. Together with a result by Cherry \cite{Cherry} this then implies that an orientable surface of genus $g$ can have at most $g$ distinct quasiminimals. Maier's work inspired the idea of a decomposition of a surface with a flow into '\emph{simpler}' components, i.e a set of curves that divide our original surface into subsurfaces, each of which contains either no quasiminimal or exactly one quasiminimal (see Section \S~\ref{sec:decompositionthm}). G. Levitt proved the existence of such decomposition for flows with saddle equilibria in 1978 \cite{Levitt}. For a wider class of flows the decomposition theorem was established by C. Gardiner \cite{gardiner} in 1985.

\subsubsection{Interval exchange transformations and Rauzy-Veech induction}\label{sec:IETsintro}
In the 1980s, Keane, Rauzy and Veech introduced new tools to study
\emph{interval exchange
transformations} (IETs for short), i.e.~piecewise isometries which appear in connection  to the study of billiards in rational polygons and  Teichmüller dynamics. These maps are intimately related to the study of orientable \emph{measured foliations}, namely foliations on surfaces (with saddle-type singularities with even number of prongs) which carry a non-atomic invariant\footnote{Invariance of the transverse measure refers here to \emph{holonomy invariance}: if a transverse curve is homotoped in such a way that the endpoints slide on trajectories of the flow and the image does not hit saddles, its  transverse measure does not change.}  \emph{transverse measure}. Indeed, the \emph{first return map} of such a flow on a transverse section is, in suitable coordinates, an interval exchange transformation.\footnote{The invariance of the transverse measure by holonomy implies indeed that the Poincar{\'e} map on a transverse segment, restricted to any continuity interval, is an isometry of the induced measure; since the measure is not atomic, up to reparametrization, it can be assumed to be the Lebesgue measure, so that the Poincar{\'e} map is a piecewise translation.}

It was conjectured by Keane, and proved by Masur \cite{Masur} and Veech \cite{Veech} independently, that IETs are 
typically\footnote{Typically is meant here in the measure theoretical sense, i.e.~for Lebesgue-almost every choice of the lenghts of the IETs continuity intervals, as long as the associated permutation is \emph{irreducible}.} uniquely ergodic.
The proof given by Veech uses a renormalization scheme nowadays called \emph{Rauzy-Veech induction}, since it appeared for the first time in the seminal work of Rauzy \cite{Rauzy} and was later developed by Veech himself (in \cite{Veech} and further papers). The renormalization scheme associates to an IET \emph{without connections}, which correspond to a Poincar{\'e} section of a flow without \emph{saddle-connections} (i.e without trajectories which join a saddle singularity to another saddle singularity), the IETs obtained by considering its restriction to smaller and smaller subintervals. The assumption on the absence of connections is important for the induction to be well-defined, but for IETs it also implies that the IET is minimal \cite{Keane1975}, which in particular implies that there is exactly one quasiminimal and no periodic orbits.

Rauzy-Veech induction also allows to associate to any IET $T$ with no connections an invariant\footnote{This combinatorial rotation number is a \emph{topological conjugacy invariant}, i.e if two IETs $T_0$ and $T_1$ are \emph{conjugated}, i.e.~$T_1=h\circ T_0\circ h^{-1}$ for some homeomorphism $h$  of $[0,1)$, then $T_0$ and $T_1$ have the same combinatorial rotation number.} known as \emph{combinatorial rotation number}\footnote{In the literature what we here call \emph{combinatorial rotation number} is sometimes called a 'IET rotation number' of simply 'rotation number'. It is an infinite path on a graph called Rauzy-class which describe all the possible permutations of IETs obtained by iterating the renormalization.}, which essentially consists of a sequence of permutations which describe the combinatorics of the successive induced maps produced by the renormalization scheme and thus encode the \emph{order} of orbits of $T$ inside $[0,1)$.


\
\subsubsection{Rauzy-Veech induction in the study of generalized and affine IETs}\label{sec:RV_GIETs} A natural generalization of IETs, which arises taking the Poincaré section of a general (rather than \emph{measured}) foliations are the so-called \emph{generalized interval exchange transformations} (GIETs for short). While IETs are piecewise isometries, GIETs are piecewise diffeomorphisms (see Section \S~\ref{sec:IETsintro}).
A  special  class of GIETs that is somehow easier to study, although many dynamical questions are widely open, are \emph{affine interval exchange transformations} (or AIETs in short) which are GIETs whose branches are affine maps. They
arise as the Poincaré section of transversally affine flows as well as linear flows on \emph{dilation} surfaces\footnote{Dilation surfaces are obtained by glueing sides of polygons using maps $z \to \lambda z + v$, where $\lambda \in \mathbb{R}_{>0}$ and $ v \in \mathbb{C}$ (see also \cite{ghazouani}) and generalize the notion of a \emph{translation surface}. On a dilation surface, as on a translation surface, one can define for every direction $\theta\in S^1$ a linear flow by moving with unit speed along lines in that  direction.}.

Rauzy-Veech induction has proven to be a useful tool in the study of AIETs and GIETs.
The definition of Rauzy-Veech induction can be extended, essentially verbatim, to GIETs with no connections and hence allows to define a meaninful notion of a combinatorial rotation number for those GIETs which are \emph{minimal}.
The induction has been used to prove the existence of (uniquely-ergodic) AIETs with a \emph{wandering interval}, i.e an interval whose forward images are all disjoint.
In particular, greatly extending  the ideas used to build  previous isolated examples\footnote{The first example of an AIET with a wandering interval was built by Levitt in \cite{levitt_feuilletages_1982}, but it was not uniquely ergodic. The first example of a uniquely ergodic AIET with a wandering interval was  found by Camelier and Gutierrez in \cite{CAMELIER_GUTIERREZ_1997} through the use of Rauzy-Veech induction for AIETs and later studied in detail by M. Cobo \cite{cobo_piece-wise_2002}.  Building on the example by Camelier and Gutierrez \cite{CAMELIER_GUTIERREZ_1997}, X. Bressaud, P. Hubert, and A. Maass  constructed in  \cite{bressaud_persistence_2010} several families of AIETs with wandering intervals. Recent examples in the Arnoux-Yoccoz family were also built in \cite{cobo_wandering_2018}.},
 Marmi, Moussa and Yoccoz proved  in \cite{MarmiMoussaYoccoz}, using very subtle and technical properties of Rauzy-Veech induction, that AIETs with wandering intervals exist for all combinatorial rotation numbers\footnote{More precisely, Marmi-Moussa-Yoccoz show that for almost every IET $T$ (in the sense of Lebesgue almost every choice of lenghts, assuming that the permutation is irreducible) and almost every choice of the \emph{slopes} of the affine branches, there exists an AIET with a wandering interval  semi-conjugated to $T_0$ and with the prescribed slopes.}. \\

Marmi, Moussa and Yoccoz also used Rauzy-Veech induction  from the early 2000s to study GIETs with the aim of  generalizing part of the theory of circle diffeomorphisms to higher genus. Their profound body of work {\cite{MMY_Cohomological, MMY_Linearization, MY_Holder}} has in particular allowed one to describe the local $\mathcal{C}^r$-conjugacy class\footnote{Marmi, Moussa and Yoccoz showed in particular in \cite{MMY_Linearization} that,  under suitable regularity assumptions, among GIETs which are a small (simple) perturbation of a IET, those which can be differentiably \emph{linearized} (in the sense that they are \emph{differentiably} conjugated to an IET, see, for example, \cite{Ulcigrai-Ghazouani} for definitions and results) form a submanifold of finite codimension.} of typical IETs for $r\geq 2$ (see also the work \cite{ghazouaniinventiones} by S. Ghazouani for a partial result in $r=1$ and the work \cite{Ulcigrai-Ghazouani} of Ghazouani with the last author   for a \emph{rigidity} results on GIETs achieved exploiting Rauzy-Veech induction).

\subsubsection{Beyond infinite completeness} All the results mentioned in the previous Section \S~\ref{sec:RV_GIETs} concern
AIETs and GIETs that, by the nature of the problems and constructions (in particular since they  are semi-conjugated or conjugated to a minimal IET) 
are \emph{dynamically indecomposable}, in the sense that they contain exactly \emph{one quasiminimal} (and no periodic orbits). In the language of Rauzy-Veech induction and combinatorial rotation numbers, the standing technical assumption in the works by Marmi, Moussa and Yoccoz on GIETs is that the combinatorial rotation number is infinite complete (see Definition~\ref{def:infcomplete})\footnote{Since rotation numbers of IETs with no connections are always infinite-complete, as shown  by Marmi, Moussa and Yoccoz in \cite{MarmiMoussaYoccoz} (see the lecture notes \cite{Poincaré-Yoccoz} for a proof), the invariance of combinatorial rotation numbers under semi-conjugacy implies that infinite completeness is a necessary condition  for a GIET to be conjugate or semi-conjugated to a minimal IET. It can be shown that it also sufficient, see Theorem \ref{thm:PoincareYoccoz}).}.

\smallskip
Let us stress though that given a GIET $T$, in general the rotation number is not infinite-complete and $T$ may have periodic orbits or more than one quasiminimal.  There are indications that the most common dynamical behaviour  for a GIET is \emph{Morse-Smale}, namely that one should expect to see finitely many attracting or repelling periodic orbits\footnote{Instances  of this phenomenon are easier to study for AIETs, or in the flow language, for transversally affine flows. In the latter setting,  see for example the work by Liousse, who proved in \cite{liousse} that generically (in the Baire category sense) transversely affine flows have Morse Smale dynamics. It has been conjectured by Ghazouani in the setting of AIETs (see \cite{ghazouani}) that this should also be the expected behaviour from the measure-theoretical point of view, i.e.~that for almost every dilation surface the vertical flow is Morse Smale.}
and that therefore non-infinite complete combinatorial rotation numbers are rare (like irrational rotation numbers are rare in the setting of circle diffeomorphisms); other dynamical behaviours, in particular the existence of Cantor-like attractors, are also possible (see e.g.~the work \cite{cascades} and the examples described in \S~\ref{sec:2chambers} and \S~\ref{sec:ex_quasiminimal}).

\subsection{Overview of the main results}\label{sec:resultsintro}
The main aim of our work is to show that the Rauzy-Veech renormalization, which is classically applied to study GIETs which have infinite complete rotation number and hence consist of only one dynamically independent component, can be suitably modified and extended to study GIETs with several quasiminimals or several dynamical components and provides rich information also in this setting; it allows us to \emph{locate} and \emph{separate} dynamically independent components and thus produce a \emph{dynamical decomposition}.

Far-reaching generalizations of Rauzy-Veech induction to more general maps (such as systems of linear isometries) have been also developed and exploited  by various authors mainly to identify in the parameter space of these maps those which are minimal (see for example the work by Avila, Hubert and Skripchenko \cite{Avila_2016} on the Novikov conjecture and the references therein). The aim behind these generalizations is often to identify in  a parameter space the dynamical systems which are minimal. Our goal, on the contrary, is to produce a dynamical decomposition and list and describe all  possible dynamical behaviours.

\subsubsection{Some definitions.}
Before we can state our results, we need a few definitions. A g-GIET $T$ is a map $T: I^t \subset [0,1) \to I^b \subset [0,1)$, where $I^t$ and $I^b$ are a collection of disjoint right-open subintervals of $[0,1)$, called the \textit{top intervals} and \textit{bottom intervals} respectively (as illustrated in Figure \ref{fig:g-GIET}), such that $T$ restricted to each top interval is an orientation preserving diffeomorphism onto a bottom interval (see Definition \ref{def:gGIETs} and Figure~\ref{fig:gGIETgraph}). The connected components of the complement of $I^t$ (resp. $I^b$) are called the top (resp. bottom) \textit{gaps}.

\begin{figure}[h]
\centering
\includegraphics[width=0.55\textwidth]{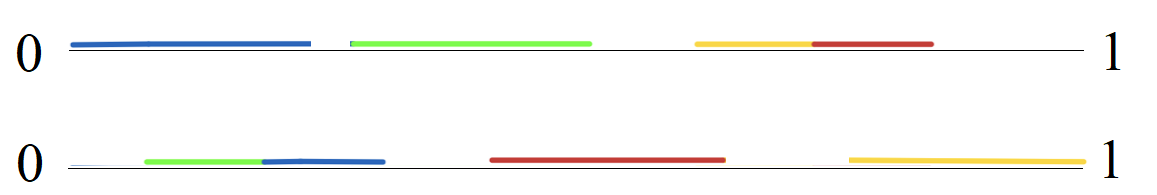}
\caption{A g-GIET on $4$ intervals.
 The top line represents the intervals $I^t$ (and the $3$ gaps), the bottom line the intervals $I^b$ (also with $3$ gaps).
Intervals with the same color are mapped to each other via an orientation preserving diffeomorphism.}\label{fig:g-GIET}
\end{figure}

Given a g-GIET $T: I^t \to I^b \subset [0,1)$, we say that $p \in I^t$ is \textit{transient} if there exists a finite orbit segment  \{$T^{-m} (p), \dots, T^{-1} (p), \, p , \,  T(p), \dots, T^{l}(p)$\},
for some $l,m \in \mathbb{N}$,  on which $T$ and $T^{-1}$ are defined (meaning that all these points belong to both the domain of $T$ and $T^{-1}$), but $T^{l+1}(p)$ lies in a top gap and $T^{-(m+1)}(p)$ lies in a bottom gap of $T$ (informally, we say that the orbit of $p$ \emph{escapes} out the domain of $T$ both in the future and in the past, for a formal definition and more examples, see \S~\ref{sec:typeorbits}).

If every orbit in $I^t$ is transient, we say that $I^t$ is a \textit{domain of transition}. If no $p \in I^t$ is transient, we say that $I^t$ is a \textit{domain of recurrence}. A \textit{periodic orbit at an endpoint} of a g-GIET $T:I^t \to I^b$ is a periodic orbit of the (continuous extension of the) restriction of $T$  to a continuity interval,  which passes through the right or left endpoint of this interval, see also Definition \ref{def:orbitatendpoint}.

\subsubsection{Decomposition into domains of recurrence}
We can now state our main result.
\begin{thm}[{\bf Decomposition theorem}]\label{thm:decomposition} Given any GIET $T:[0,1) \to [0,1)$, there exists a partition of $[0,1)$ into a finite disjoint union
\begin{align*}
    [0,1) = D \sqcup \left( \ \bigsqcup_{i=1}^k R_i \right)
\end{align*}
where   $D$ and $R_i$, for $1\leq i\leq k$, are finite unions of  right-open intervals, $D$ is a domain of transition and each $R_i$ is a domain of recurrence such that the g-GIET defined by restricting $T$ to $R_i$
is described by exactly one of the  two following cases:
\begin{enumerate}
    \item there exists a periodic orbit (possibly at an endpoint) and all recurrent orbits are periodic of the same period. \textbf{(periodic domain)};

    \vspace{1.3mm}
    \item there exists a unique quasiminimal and all recurrent orbits are non-trivially recurrent \textbf{(quasiminimal domain)}.
\end{enumerate}
\end{thm}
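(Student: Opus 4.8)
The plan is to prove the theorem by induction on the combinatorial complexity of $T$ (the number of its continuity intervals), using the g-GIET Rauzy--Veech renormalization as the main engine to \emph{locate} and \emph{peel off} dynamically independent components. First I would reduce the statement to a description of the recurrent dynamics. The points whose orbit transits between recurrence regions form an open set: escaping into a gap is an open condition, and finite orbit segments vary continuously away from the finitely many discontinuity points and endpoints. Its boundary is therefore contained in the forward and backward orbits of these finitely many singular points, so taking the interval hull produces the domain of transition $D$ as a finite union of right-open intervals, and $R := [0,1)\setminus D$ as a finite union of right-open intervals carrying all recurrent orbits. It then suffices to decompose $R$ into domains of recurrence of the two prescribed types.

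The heart of the argument is the trichotomy produced by applying the induction to the restriction of $T$ to a component of $R$. At the orbit level an individual orbit either escapes into a gap (transient, already assigned to $D$), returns periodically through a singularity, or persists along an infinite connection-free path; at the level of the whole g-GIET this dichotomizes as follows. Either the induction proceeds indefinitely along an \emph{infinite-complete} combinatorial rotation number, in which case the component has no connections and, by the sufficiency direction of the Poincar\'e--Yoccoz criterion (Theorem~\ref{thm:PoincareYoccoz}), is semi-conjugate to a minimal IET; it then contains a \emph{unique} quasiminimal and only non-trivially recurrent orbits, a domain of type (2). Or the induction detects a \emph{connection}, i.e.\ an orbit joining two singular points. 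A connection lets me cut the domain along the corresponding separatrix into g-GIETs whose number of surviving continuity intervals is strictly smaller --- exactly as a connection forces a reduction in classical Rauzy--Veech induction --- so that the induction hypothesis applies to each piece.

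Periodic domains (type (1)) arise as the degenerate outcome of this cutting: when a renormalized branch returns into itself through a singularity, one obtains a maximal interval of periodic points of a common period $n$, bordered by periodic orbits at its endpoints, all of whose recurrent orbits are periodic of period $n$. Finiteness of such domains is forced because each is bordered by one of the finitely many singular orbits and because renormalization keeps the relevant period bounded at each fixed complexity level, so that no cascade of ever-higher periods can accumulate. Assembling the pieces, the finitely many connection-cuts partition $R$ into finitely many g-GIETs, each of which is either connection-free --- hence a single quasiminimal by Theorem~\ref{thm:PoincareYoccoz} --- or periodic, yielding the claimed decomposition $[0,1)=D\sqcup\bigsqcup_{i=1}^{k}R_i$.

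I expect the main obstacle to be the \emph{termination and finiteness} of this splitting procedure. One must produce a single monotone complexity measure that is preserved under connection-free induction and strictly decreases at every connection-induced cut, so that only finitely many cuts occur before each remaining piece is connection-free or periodic. The delicate point is to rule out an infinite, accumulating family of separatrices or periodic domains --- a potential Cantor-like set of cut points --- which would destroy the finite-union-of-intervals structure; I would control this by combining the strict drop in the number of surviving branches at each cut with the analogue, at the level of the interval count, of the Maier--Cherry genus bound on the number of quasiminimals, guaranteeing that the recursion bottoms out after boundedly many steps and that all partition endpoints lie among the finitely many points generated by the cuts.
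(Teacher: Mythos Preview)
Your proposal has a genuine gap at its core: the dichotomy ``either the induction proceeds along an infinite-complete combinatorial rotation number, or it detects a connection'' is false. Consider a GIET with two quasiminimals and no connections (such as the Disco map in \S\ref{sec:ex_quasiminimal} for a suitable exceptional parameter). Rauzy--Veech induction runs forever and never encounters Case~1b, yet the resulting path is \emph{not} infinite-complete: the letters labelling intervals that ``belong'' to the second quasiminimal, or to wandering intervals, lose infinitely often and never win. Your scheme is then stuck---there is no connection to cut along, and Theorem~\ref{thm:PoincareYoccoz} does not apply because its infinite-completeness hypothesis fails. The missing idea is precisely what the paper supplies: one analyses the set $\mathcal{A}_\infty\subset\mathcal{A}$ of letters that win infinitely often, shows (Propositions~\ref{mainprop} and~\ref{findingdomrec}) that $I_\infty^t=\bigcup_{\alpha\in\mathcal{A}_\infty} I_\alpha^t$ is a domain of recurrence containing exactly one recurrent orbit closure, removes these intervals, and restarts the induction on the complement with strictly fewer letters. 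Termination is then immediate---each basic step removes at least one letter, so at most $d$ steps suffice---and no separate Maier--Cherry-type bound is needed.

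A second issue is that your initial separation of $D$ is circular. For a GIET $T:[0,1)\to[0,1)$ there are no gaps, hence no transient orbits: every point has infinite forward and backward orbit. Transience only becomes meaningful \emph{after} restriction to a proper subdomain, so one cannot define $D$ before the $R_i$ are known. In the paper $D$ is simply the complement of $\bigsqcup_i R_i$ after the recurrence domains have been peeled off; it is an output of the construction, not an input. Relatedly, your plan to work on ``a component of $R$'' presumes that connected components carry at most one recurrent orbit closure, but already in Example~1 (Figure~\ref{fig:periodicdecomp}) the periodic domain $R_1$ sits in the middle of the quasiminimal domain $R_2$, so connected components are not the right units of the decomposition.
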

\noindent An example of this decomposition is given  in \S~\ref{sec:ex_decomposition}. Notice that although we start from a GIET with no gaps, the restriction of $T$ (formally defined in \S~\ref{sec:domains}) is a g-GIET (an example is shown in Figure ~\ref{fig:restriction}). The decomposition is in general not unique, see the remark in \S~\ref{sec:notunique}.

\subsubsection{Structure of domain of recurrence.}
The decomposition theorem is accompanied by a \emph{structure theorem}, which describes the structure of the $R_i$ given in Theorem 1.1. Namely, we introduce the notion of a \emph{tower representation over a g-GIET} (for which we refer the reader to the definition given in Section\S~\ref{sec:towers}) and show that each of the domains of recurrence given by Theorem 1.1 is a tower representation over a g-GIET that satisfies the following:
\begin{thm}[{\bf Structure Theorem} ]\label{thm:structure}
Each domain of recurrence $R_i$ for $i \in \{1, \dots, k\}$ given in Theorem \ref{thm:decomposition} is a tower representation of $T$ over a g-GIET $T_i: E_i^t \to E_i^b$. If $d_i$ denotes the number of intervals of $T_i$, then:
\begin{enumerate}[label=\roman*)]
\item if $R_i$ is in case 1, then $d_i = 1$ and $T_i$ is a $C^r$-diffeomorphism on its domain with at least one fixed point (possibly at an endpoint) and whose recurrent orbits are all fixed points;
\item if $R_i$ is in case 2, then $d_i >1$, the combinatorial rotation number of $T_i$ is infinite complete and $T_i$ is semi-conjugated to a minimal IET with the same combinatorial rotation number.
\end{enumerate}
\end{thm}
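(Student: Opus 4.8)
The plan is to realize each base g-GIET $T_i$ as the \emph{first-return map} of $T$ to a carefully chosen union of sub-intervals $E_i^t \subset R_i$, obtained by running the extended Rauzy-Veech renormalization on the restriction $T|_{R_i}$ until its combinatorics stabilizes, and then to read off the tower representation directly from the return structure (in the sense of \S~\ref{sec:towers}). Since the induced map produced by the renormalization is by construction a return map to a smaller transversal, following each point of $R_i$ forward until its first return to $E_i^t$ exhibits $R_i$ as a tower over $T_i$, whose floors are the successive images $T^j$ of the sub-intervals with return time exceeding $j$. The only thing to verify at this stage is that the return time is finite on all recurrent orbits, so that $R_i$ is recovered \emph{exactly} as a tower over $T_i$; this is where the hypothesis that $R_i$ is a domain of recurrence enters, since no recurrent orbit escapes into a gap of $T$.

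Case 1 (periodic domain). Here every recurrent orbit is periodic of the same period $p$, and I would argue that a common period forces the return combinatorics to collapse to a single interval, so that $d_i = 1$ and the tower has constant height $p$. The induced map is then $T^p$ restricted to the single base interval, which is an orientation-preserving $C^r$-self-diffeomorphism. The periodic orbits of $T$ in $R_i$ correspond precisely to its fixed points, so the existence of at least one periodic orbit (possibly at an endpoint, in the sense of Definition~\ref{def:orbitatendpoint}) yields at least one fixed point of $T_i$ (at an endpoint in the corresponding case). Finally, an orientation-preserving diffeomorphism of an interval has every recurrent orbit fixed, which gives the last assertion of (i).

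Case 2 (quasiminimal domain). Here the recurrent set of $R_i$ is a single quasiminimal on which every orbit is non-trivially recurrent. First I would observe that $d_i > 1$: a one-interval g-GIET is an interval diffeomorphism, whose recurrent orbits are fixed points, hence it cannot carry a non-trivially recurrent orbit. Next, I would check that the recurrent dynamics of the first-return g-GIET $T_i$ is \emph{connection-free} and minimal, so that the extended Rauzy-Veech induction runs indefinitely and produces, as in the classical IET setting, a well-defined combinatorial rotation number; minimality forces this rotation number to be infinite complete in the sense of Definition~\ref{def:infcomplete}. The semi-conjugacy to a minimal IET with the same combinatorial rotation number then follows from the necessary-and-sufficient characterization of Theorem~\ref{thm:PoincareYoccoz}, giving (ii).

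The main obstacle lies in Case 2, and it is precisely the point at which the g-GIET framework must be reconciled with the classical theory. The infinite-completeness and semi-conjugacy results of Theorem~\ref{thm:PoincareYoccoz} are formulated for honest (gapless, connection-free) minimal GIETs, whereas $T_i$ is a priori a g-GIET with gaps. The crux is therefore to show that the gaps of $T_i$ meet only transient orbits and never cut through the quasiminimal, so that the recurrent dynamics of $T_i$ is genuinely connection-free and supported away from the gaps; equivalently, that every recurrent orbit of $R_i$ returns to the base transversal $E_i^t$ without ever falling into a gap. Establishing this is what lets me pass from the g-GIET world back to the classical GIET setting and invoke Theorem~\ref{thm:PoincareYoccoz}.
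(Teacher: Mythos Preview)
Your proposal has a misconception at its core and is organized backwards relative to the paper.

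First, the obstacle you identify in Case 2 does not exist: Theorem~\ref{thm:PoincareYoccoz} is stated and proved in this paper for RV-stable, $\infty$-complete \emph{g-GIETs}, not for gapless GIETs. There is no need to argue that the gaps of $T_i$ avoid the quasiminimal or to pass back to the classical setting; the Poincar{\'e}--Yoccoz result already applies to maps with gaps as soon as the rotation number is $\infty$-complete.

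Second, and more structurally, the paper proves Theorems~\ref{thm:decomposition} and~\ref{thm:structure} \emph{simultaneously}, not sequentially. The domains $R_i$ are not handed to you abstractly and then analyzed; they are \emph{defined} as $R_i := \mathrm{Rep}(E_i^t)$, the tower representation of $T$ over a base $E_i^t$ produced by the renormalization scheme of \S\ref{sec:proofs}. So the assertion ``$R_i$ is a tower representation over $T_i$'' is true by construction, and there is nothing to recover by following orbits back to a transversal. Your approach of starting from $R_i$ and running Rauzy--Veech on $T|_{R_i}$ to rediscover $T_i$ is a detour, and forces you to re-prove facts that are definitional in the paper's setup.

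This matters concretely in Case 2. You assert that ``minimality forces the rotation number to be infinite complete,'' but give no argument; in fact this is exactly the nontrivial direction. In the paper's construction the base $E_i^t$ in the quasiminimal case is $I_\infty^t$, the union of intervals whose labels \emph{win infinitely often} under RV induction (Proposition~\ref{findingdomrec}). Infinite completeness of $T_i := T^{(n)}|_{I_\infty^t}$ is then immediate from the definition: every label of $T_i$ wins infinitely often. Similarly, in Case 1 the fact that $d_i=1$ is not deduced from ``common period forces collapse to a single interval'' (which you do not justify) but comes from Proposition~\ref{winningletterstrichotomy}: $|\mathcal{A}_\infty|=1$ precisely when the renormalization limit is the leftmost point of a periodic orbit.
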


While decomposition and structure theorems in this spirit have already been proven for $C^r$-flows on orientable surfaces by Gutierrez, Gardiner and Levitt (see ~\S \ref{sec:decompositionthm}), our proof relies essentially on Rauzy-Veech induction at the level of the Poincaré map and consists of purely combinatorial arguments. It consists of building an explicit renormalization scheme which allows to find the decomposition given in Theorem~\ref{thm:decomposition} and \ref{thm:structure} and thus exhibits a new, powerful application of the Rauzy-Veech induction when studying GIETs which are not infinite complete.

\subsubsection{Bounds on periodic domains} As a corollary of our argument we receive the following bounds on periodic domains:

\smallskip

\begin{thm}\label{thm:boundonperiodicdomains} Let $T:[0,1) \to [0,1)$ be a GIET with $d$ intervals. Let $p$ be the number of periodic domains and $q$ be the number of quasiminmal domains of $T$ described by Theorem 1.1 and 1.2. Then
\begin{align*}
q < \left\lfloor \frac{d-p}{2}\right\rfloor
\end{align*}
\end{thm}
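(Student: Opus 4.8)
The plan is to deduce the bound directly from Theorems~\ref{thm:decomposition} and~\ref{thm:structure} by a counting argument on the number of branches. Applying the Decomposition Theorem, we write $[0,1) = D \sqcup \bigsqcup_{i=1}^{k} R_i$ with $k = p+q$, where exactly $p$ of the domains of recurrence are periodic and exactly $q$ are quasiminimal. By the Structure Theorem each $R_i$ is a tower representation over a base g-GIET $T_i : E_i^t \to E_i^b$ on $d_i$ intervals, with $d_i = 1$ when $R_i$ is periodic and $d_i \ge 2$ when $R_i$ is quasiminimal; in particular $\sum_{i=1}^{k} d_i \ge p + 2q$. The whole argument then reduces to showing that the $d$ branches of the ambient GIET $T$ must \emph{pay} for the branches of the base g-GIETs, together with an unavoidable boundary surplus, so that $d \ge p + 2q + 2$.

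First I would set up the bookkeeping of branches via their \emph{confined separatrices}, i.e.~the forward and backward orbits of the $d-1$ interior breakpoints of $T$ in $(0,1)$. The point is that the base g-GIET $T_i$ of a quasiminimal tower is recurrent and, by Theorem~\ref{thm:structure}~(ii), semi-conjugate to a minimal IET, so each of its $d_i - 1$ interior breakpoints descends to a genuine (non-removable) singular orbit of $T$ which stays inside $R_i$ and hence cannot be shared with any other domain. I would make this precise by attaching to each domain the family of breakpoints of $T$ whose orbit never escapes into the gaps of a different domain, and by checking that distinct domains contribute \emph{disjoint} such families. Counting the branches of $T$ against these disjoint families yields a first estimate $d \ge \sum_{i} d_i \ge p + 2q$, where the branches absorbed by the domain of transition $D$ and by the interfaces between consecutive domains are, so far, simply discarded.

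The delicate point, and the step I expect to be the main obstacle, is to upgrade this to the \emph{strict} inequality by recovering the additive constant $2$ that is lost in the naive count. Here I would exploit the tower structure together with the presence of gaps: passing from each base $T_i$ to the restriction of $T$ to $R_i$, and then reassembling these restrictions into a global GIET $T:[0,1)\to[0,1)$ \emph{with no gaps}, forces every recurrent domain to be flanked by boundary orbits (saddle-connection--type orbits, or the transient separatrices bounding $D$) that are not already counted among the $\sum_i (d_i - 1)$ interior breakpoints internal to the towers. Accounting for these boundary branches is most transparent if one reads the count off the Rauzy--Veech renormalization used to establish Theorems~\ref{thm:decomposition} and~\ref{thm:structure}, where the number of intervals is tracked explicitly; this gives $d \ge p + 2q + 2$. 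The conclusion is then formal: from $d \ge p + 2q + 2$ we get $2q \le d - p - 2$, hence $q \le \tfrac{d-p}{2} - 1$, and since $q$ is an integer this yields $q \le \lfloor \tfrac{d-p}{2}\rfloor - 1 < \lfloor \tfrac{d-p}{2}\rfloor$, as claimed.
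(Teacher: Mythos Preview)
Your strategy has a concrete error: the inequality $d \ge p + 2q + 2$ that you aim for is simply false. Take $T$ to be an irrational rotation, viewed as an IET on $d=2$ intervals; then $p=0$, $q=1$, and $p+2q+2=4>2=d$. So the ``boundary surplus'' you hope to extract from the absence of gaps in a global GIET does not exist in general, and the whole second half of your argument collapses. (Incidentally this same example shows that the statement with a \emph{strict} inequality is delicate; the honest inequality one gets is $q \le \lfloor (d-p)/2\rfloor$.) Even the first half of your argument --- assigning disjoint families of breakpoints of $T$ to the different domains via their confined separatrices --- is only sketched, and making it rigorous would essentially force you to re-run the renormalization analysis anyway.

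The paper's proof bypasses all of this by reading the bound directly off the renormalization scheme that \emph{produces} the decomposition, rather than off the decomposition itself. In that scheme one keeps a running g-GIET; each time a domain of recurrence $E_i^t$ is detected, one restricts to its complement, and the restricted map (equation~\eqref{eq:newstepmap}) has exactly $d_i$ fewer intervals, where $d_i$ is the number of intervals of the base g-GIET $T_i$. By Proposition~\ref{winningletterstrichotomy} (equivalently Theorem~\ref{thm:structure}), $d_i=1$ for periodic domains and $d_i\ge 2$ for quasiminimal domains. Since one starts with $d$ intervals and the number of intervals is nonincreasing along the scheme, $\sum_i d_i \le d$, i.e.\ $p+2q\le d$, which gives the bound immediately. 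There is no separate accounting of breakpoints, separatrices, or boundary orbits --- the interval count is already a built-in potential function for the algorithm.
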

\noindent Note that $q$ corresponds to the number of quasiminimals of $T$. In the special case when $T$ is an AIET, every periodic domain contains either infinitely many or a unique periodic orbit (which corresponds to having either a \textit{flat} or \textit{affine} cylinder, see also \cite{ghazouani}). Therefore we get the following:

\begin{cor} Let $T:[0,1) \to [0,1)$ be an affine interval exchange map (AIET) with $d$ intervals. Then the number of periodic orbits is either infinite or at most
$d-2q$, where $q$ is the number of
quasiminimals of $T$.
\end{cor}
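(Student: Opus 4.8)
The plan is to deduce the Corollary from Theorem~\ref{thm:boundonperiodicdomains} together with the Decomposition and Structure Theorems, by first localizing all periodic orbits inside the periodic domains and then counting them. First I would recall that, by Theorem~\ref{thm:decomposition}, every recurrent orbit of $T$ lies in one of the domains of recurrence $R_i$, since the complementary set $D$ is a domain of transition and hence contains no recurrent (in particular no periodic) orbit. Among the $R_i$, the quasiminimal domains (case 2) contain only non-trivially recurrent orbits and therefore no periodic orbit, so every periodic orbit of $T$ is contained in one of the $p$ periodic domains (case 1).

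Next I would analyze a single periodic domain $R_i$ in the affine setting. By Theorem~\ref{thm:structure}, such an $R_i$ is a tower representation over a g-GIET $T_i$ with $d_i = 1$, so $T_i$ is a single branch; for an AIET this branch is an affine map $x \mapsto \lambda x + c$ of an interval admitting at least one fixed point. Here the dichotomy recorded before the statement of the Corollary enters through the elementary classification of affine maps: if $\lambda = 1$, the existence of a fixed point forces $c = 0$, so $T_i$ is the identity and $R_i$ is a flat cylinder foliated by infinitely many periodic orbits; if $\lambda \neq 1$, the map has a unique fixed point (possibly at an endpoint), so $R_i$ is an affine cylinder carrying exactly one periodic orbit. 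Thus each periodic domain contributes either infinitely many or precisely one periodic orbit.

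Finally I would combine these counts. If at least one periodic domain is a flat cylinder, then $T$ has infinitely many periodic orbits and there is nothing further to prove. Otherwise every periodic domain is an affine cylinder, whence the total number of periodic orbits equals the number $p$ of periodic domains. Theorem~\ref{thm:boundonperiodicdomains} yields $2q \le d-p$ (indeed $q < \lfloor (d-p)/2 \rfloor$), hence $p \le d - 2q$, and since $q$ is exactly the number of quasiminimals of $T$ as noted after that theorem, this is the claimed bound.

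I expect the only delicate point to be the second step, namely certifying that in the affine case the two alternatives (flat versus affine cylinder) are genuinely exhaustive and that an affine cylinder carries a single periodic orbit, including the bookkeeping of periodic orbits passing through an endpoint. This rests entirely on the $d_i = 1$ conclusion of the Structure Theorem together with the classification of fixed points of affine maps, so I anticipate it will require no new machinery beyond a careful application of results already established.
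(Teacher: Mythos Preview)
Your proposal is correct and follows essentially the same approach as the paper: the paper simply records (citing \cite{ghazouani}) that for AIETs each periodic domain is either a flat or an affine cylinder, hence contributes infinitely many or exactly one periodic orbit, and then invokes Theorem~\ref{thm:boundonperiodicdomains}. You spell out explicitly why the $d_i=1$ structure from Theorem~\ref{thm:structure} forces this dichotomy in the affine case, which is exactly the content the paper leaves implicit.
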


\subsubsection{Bounds on ergodic measures} Finally, our renormalization scheme also yields a way to {bound}  the number of (non-atomic) ergodic measures of a GIET $T$. By Theorem \ref{thm:decomposition}, we can decompose $T$ into $p$ periodic domains and $q$ quasiminimal domains. 
Notice that $T$ restricted to a periodic domain has as many ergodic measures as there are periodic orbits (i.e possibly infinite), given by the sum of Dirac measures on the points in each orbits. Let us therefore consider only \emph{non-atomic}\footnote{Let us recall that a (Borel) measure $\mu$ on $[0,1]$ is called \emph{non-atomic} if for $\mu(\{x\})=0$ for any $0\leq x\leq 1$.} invariant measures.

\begin{cor}\label{cor:ergodic_bound} Given a GIET $T:[0,1) \to [0,1)$ with $q$ quasiminimal domains, the number $\mathcal{E}_T$ of non-atomic ergodic invariant probability measures for $T$ is finite and bounded above by
$$
\mathcal{E}_T \leq \sum_{i=1}^q \left[ \frac{d_i}{2}\right]
$$
where $[x]$ denotes the fractional part of $x$ and  $d_i$, for $1\leq i\leq q$,  denote the number of continuity intervals of the IETs $T_i$ given by the Structure Theorem~\ref{thm:structure}. 
\end{cor}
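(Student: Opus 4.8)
The plan is to reduce the bound on non-atomic ergodic measures for $T$ to a bound on each quasiminimal domain separately, and then to the combinatorial structure supplied by the Structure Theorem. First I would observe that, by Theorem~\ref{thm:decomposition}, the interval $[0,1)$ decomposes as $D \sqcup \bigsqcup_i R_i$, and that any non-atomic ergodic invariant probability measure $\mu$ must be supported on the recurrent part of the dynamics. Indeed, the domain of transition $D$ consists entirely of transient orbits, which wander out of the domain and hence carry no invariant mass; and on a periodic domain every recurrent orbit is periodic, so every ergodic invariant measure supported there is a uniform average of Dirac masses along a single periodic orbit, hence \emph{atomic}. Therefore every non-atomic ergodic $\mu$ is supported on the union of the quasiminimal domains $R_i$, $1 \le i \le q$, and by ergodicity it is supported on exactly one of them. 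This gives the splitting $\mathcal{E}_T \le \sum_{i=1}^q \mathcal{E}_{R_i}$, where $\mathcal{E}_{R_i}$ counts the non-atomic ergodic measures carried by $R_i$.

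The next step is to pass from the tower representation $R_i$ to its base g-GIET $T_i \colon E_i^t \to E_i^b$. Since $R_i$ is a tower over $T_i$ (Section~\ref{sec:towers}), there is a measure-theoretic correspondence between $T$-invariant measures supported on $R_i$ and $T_i$-invariant measures on the base: every ergodic $T$-invariant measure on $R_i$ induces, by restriction to the base and normalization, an ergodic $T_i$-invariant measure, and this correspondence is a bijection on ergodic measures that preserves non-atomicity (a measure is non-atomic upstairs if and only if its induced base measure is non-atomic, since the tower has finite return time on each column). Consequently $\mathcal{E}_{R_i}$ equals the number of non-atomic ergodic invariant measures of the base g-GIET $T_i$. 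By part ii) of the Structure Theorem, $T_i$ has infinite complete combinatorial rotation number and is semi-conjugated, via some monotone $h_i$, to a minimal standard IET $S_i$ on the same number $d_i$ of intervals, with the same combinatorial rotation number.

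Now I would invoke the classical Veech bound on the number of ergodic invariant measures of an IET. Katok's theorem (and Veech's refinement) states that a minimal IET on $d_i$ intervals has at most $\lfloor d_i/2 \rfloor$ ergodic invariant probability measures. Pushing $T_i$-measures forward through the semiconjugacy $h_i$ sends distinct \emph{non-atomic} ergodic $T_i$-measures to distinct ergodic $S_i$-measures: the map $h_i$ collapses only the wandering intervals and the preimages of $S_i$-orbits, but a non-atomic $T_i$-measure gives zero mass to the (at most countable) set of collapsed points, so the pushforward is well-defined, non-atomic, injective on ergodic classes, and $S_i$-invariant. Hence $\mathcal{E}_{R_i} \le \lfloor d_i/2 \rfloor$. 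Combining with the splitting above yields
\begin{equation*}
\mathcal{E}_T \;\le\; \sum_{i=1}^q \mathcal{E}_{R_i} \;\le\; \sum_{i=1}^q \left\lfloor \frac{d_i}{2} \right\rfloor,
\end{equation*}
which is the asserted bound (interpreting the bracket as the integer-part function on the positive quantity $d_i/2$).

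The main obstacle I anticipate is the injectivity of the pushforward under the semiconjugacy $h_i$, that is, ruling out that two genuinely distinct non-atomic ergodic $T_i$-measures collapse to the same $S_i$-measure. This requires a careful analysis of the fibres of $h_i$: one must show that the semiconjugacy, being monotone and collapsing only a measure-zero (for any non-atomic invariant measure) set of wandering intervals, does not identify the supports of two ergodic measures. The cleanest route is to argue that each non-atomic ergodic $T_i$-measure is the unique invariant measure whose pushforward is a given ergodic component of $S_i$ restricted to the appropriate minimal subset, or alternatively to bound $\mathcal{E}_{R_i}$ directly by the number of ergodic measures of $S_i$ via the fact that invariant measures pull back uniquely along a monotone semiconjugacy off the wandering set. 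The counting bound $\lfloor d_i/2 \rfloor$ itself is classical and needs only to be cited, so the entire weight of the argument rests on making the tower-to-base and base-to-IET measure correspondences precise and verifying they are injective on non-atomic ergodic classes.
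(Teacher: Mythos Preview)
Your proposal is correct and follows essentially the same approach as the paper: reduce to the quasiminimal domains via the decomposition, pass to the base g-GIETs $T_i$ via the tower representation, transfer measures through the semiconjugacy to the minimal IETs $S_i$, and invoke the classical $\lfloor d_i/2\rfloor$ bound. The paper's proof is much terser---it simply asserts that ``the ergodic measures of the base g-GIET $T_i$ are given by the pullback of the ergodic measures of the semi-conjugated IET'' without further justification---whereas you correctly identify the injectivity of the measure correspondence across the semiconjugacy as the one step requiring care, and your sketch of why it holds (fibres of $h_i$ are points outside a countable union of wandering intervals, hence null for any non-atomic invariant measure) is exactly the missing argument.
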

\noindent Furthermore, the output of the renormalization scheme actually allows theoretically to \emph{compute}  $\mathcal{E}_T$ from the permutations of the IETs $T_i$ (see \S~\ref{sec:ergodic_bound}).   



\smallskip
\subsection{Organization of the paper and overview.} \label{sec:outlook}
In Section \S~\ref{sec:flowsintro}, which is written in the language of flows on surfaces,  we informally introduce some of the classical examples of quasiminimals in genus one and in higher genus. This section is meant to provide motivation and intuition to the reader for the objects that we describe combinatorially, as well as a historical account of some related results in the literature.

The basic object that we introduce in this paper and which is essential for the dynamical decomposition of a GIET are \textit{interval exchange transformations with gaps} (g-GIETs), whose formal definition and first properties are stated in Section \S~\ref{sec:gGIETsec}. The key tool for finding the dynamical decomposition of a GIET will be Rauzy-Veech induction, which we generalize to the setting of g-GIETs in Section \S~\ref{sec:RVsection}. The induction and related constructions as well a the notion of combinatorial rotation number are defined in \S~\ref{sec:RV g-GIETs}. When applied to a g-GIET with periodic orbits, or quasiminimals, Rauzy-Veech induction '\emph{accumulates}' towards the left-most endpoint of a periodic orbit, or a quasiminimal. Section \S~\ref{sec:renormandrecurrencesec} is therefore dedicated to identifying these \emph{renormalization limits} and, for each one of them, to find a suitable region of recurrence which contains the corresponding periodic orbit or quasiminimal and which satisfies the assumption of Theorem \ref{thm:decomposition} and \ref{thm:structure}. In Section \S~\ref{sec:decompositionproof}, we then define a renormalization scheme which successively \emph{removes} the regions of recurrence described in the previous section from the domain of any given GIET in order to \emph{separate} the dynamics and to be able to restart the renormalization scheme on the subset of the domain that remains. From this renormalization scheme we then deduce the proofs of Theorem \ref{thm:decomposition}, \ref{thm:structure} and \ref{thm:boundonperiodicdomains}.



\section{Flows with Quasiminimals}\label{sec:flowsintro} In this section, we give a brief, informal overview over the theory of flows on surfaces with a special focus on quasiminimal sets. The motivation for this chapter is on one hand to give the reader some intuition as well as a historical account of some classical examples of quasiminimals in genus one (see Denjoy- and Cherry flow in Section \S~\ref{sec:quasiminimals in genus 1}) as well as in higher genus (see the two chamber flow in Section \S~\ref{sec:DenjoyCherryHigherg}). Furthermore, we want to show how the previous work done on the decompostion of flows on surfaces throughout the 20th century, in particular the work by Levitt (\cite{levitt_feuilletages_1982}), Gardiner (\cite{gardiner}) and Gutierrez (\cite{gutierrez}), is related to our results. The reader who prefers can skip this section and move directly to the definitions in \S~\ref{sec:gGIETmotivation}.
\subsection{Recurrence and quasiminimals} We define the notion of \textit{non-trivial recurrence} and \textit{quasiminimals} for flows on surfaces.

\subsubsection{Recurrence} Given a continuous flow on a (compact, connected, orientable, topological)
surface with finitely many fixed points, the $\omega$-limit (respectively $\alpha$-limit) set of a trajectory $l$ is defined as the set of accumulation points of the positive (negative) semi-trajectory
\footnote{where the positive (negative) semi-trajectory of $p$ is defined as $\{f^t(p): t \geq 0 \}$ $(\{f^t(p): t \leq 0 \})$.}
$l^+(p)(l^-(p))$ starting at any point $p \in l$. A trajectory for which both the $\omega$-limit set and the $\alpha$-limit consist of a fixed point is called a \textit{saddle connection}. A trajectory for which either the $\omega$-limit set or the $\alpha$-limit set (but not both) consist of a fixed point is called a \textit{separatrix}. The trajectory $l$ is said to be \textit{$\omega$-recurrent} (resp.~\textit{$\alpha$-recurrent}) if it is contained in its $\omega$-limit (resp. $\alpha$-limit) set and \textit{recurrent} if it is both $\omega$- and $\alpha$- recurrent. We split recurrent trajectories into two types:

\begin{defs}A closed trajectory or a fixed point is called \textit{trivially recurrent}. All other recurrent trajectories are called \textit{non-trivially recurrent}. The topological closure of a recurrent trajectory is called a \textit{quasiminimal}.
\end{defs}

\subsubsection{First return map} Let $f^t$ be a flow on a surface $S$. A classical tool to analyze the trajectories of $f^t$, dating back to the seminal work of Poincaré in dynamics, is to consider the \textit{first return map} (also called Poincar{\'e} map) of $f^t$ on a transversal\footnote{An arc $I \subset S$ is called a \textit{transversal segment} if it does not contain any singularity of the foliation in its interior and for every point $p \in \Sigma$
it is \emph{transverse} to the leaves of the foliation.}
segment $I \subset S$. Given a transversal segment $I$, for any point $p \in I$ whose forward trajectory $l^+(p)$ intersects the transversal $I$ again, we define $T(p)$ as the \emph{first} point of intersection of $l^+(p)$.  
Notice that here $T$ is not
necessarily everywhere defined;
 we call  \textit{first return map} of $T$ to $I $ the map $T:I^t\to I$ where $I^t\subset I$ denotes the maximal domain of definition of $T$ in $I$.

\subsection{Some examples of quasiminimals on the torus}\label{sec:quasiminimals in genus 1} The simplest example of a flow with a quasiminimal set is provided by the irrational linear flow\footnote{Given a direction $\theta \in S^1$, the \textit{directional flow} $(f^t_{\theta})_{t\in \mathbb{R}}$ on the torus $\mathbb{T}^2$ is the projection to $\mathbb{T}^2=\mathbb{R}^2/\mathbb{Z}^2$ of the linear flow on  $\mathbb{R}^2$ which assigns to $(x,y)\in [0,1)^2$ the point reached in time $t$ flowing with unit speed along the line in direction $\theta$, see Figure~\ref{fig:flowstorus}, left.}
on the torus (see Figure \ref{fig:flowstorus}, left).
This 
flow has a Poincar{\'e} first return map which is an irrational rotation\footnote{Recall that an irrational rotation $R_\alpha$ with rotation number $\alpha\in \mathbb{R}$ is a map of $\mathbb{R}\backslash \mathbb{Z}$ given by $R_\alpha(x)=  x+\alpha \mod 1$. The rotation is called \emph{irrational} if $\alpha\notin \mathbb{Q}$; the map $R_\alpha$ can also be seen as an IET of $d=2$ intervals.}, and therefore is minimal and has a unique quasiminimal that is equal to the entire torus.  

At the end of last century, Poincaré [199] conjectured that there exists a $C^{\infty}$-flow on the torus without singularities and with a unique quasiminimal that is \emph{not} the entire torus, also called an \textit{exceptional set}. This conjecture has lead to the construction of two celebrated classes of examples of flows on the torus, namely \textit{Denjoy flows} and \textit{Cherry flows}.

\subsubsection{Denjoy flows}\label{sec:Denjoyflow}  Denjoy proved in \cite{Denjoy} that the Poincaré conjecture is false if the flow  is sufficiently regular\footnote{Denjoy showed that  if a circle diffeomorphism $f:S^1\to S^1$ is $C^{1+BV}$, namely $f$ is $C^{1}$ and the derivative has bounded variation, then $f$ has no wandering intervals. This implies in particular that no flow on the torus of class $\mathcal{C}^{1+BV}$ and without singularities can have a quasiminimal which is not the whole torus.} (in particular if it is of class $C^2$), but he constructed in \cite{Denjoy} a flow of class $\mathcal{C}^1$ on the torus with an exceptional set. This example, known as the Denjoy flow, has neither singularities nor closed trajectories and has a unique quasiminimal $\Omega$ whose closure is \textit{Cantor-like}, i.e.~locally homeomorphic to the direct product of a segment and the Cantor set.
Informally, the Denjoy flow is obtained \emph{blowing up} an orbit of the irrational flow on the torus, namely replacing it with an open \emph{band} (i.e.~topologically a strip) of shrinking size,  as depicted in Figure 2, and then filling the band again with orbits going in the same direction as the original orbit (for the actual  construction and definition of the flow, see \cite{Denjoy}). The quasiminimal $\Omega$ is then the closure of the complement of the band and for all $p \in T^2$, $\alpha(l(p)) = \omega(l(p)) = \Omega$.   The first return map of such a flow to any transversal is \textit{semi-conjugated} to a minimal interval exchange transformation with $d=2$ or $d=3$ intervals (more precisely, to an irrational circle diffeomorphism where the transversal is closed).

\begin{figure}
\centering
\includegraphics[width=1.0\textwidth]{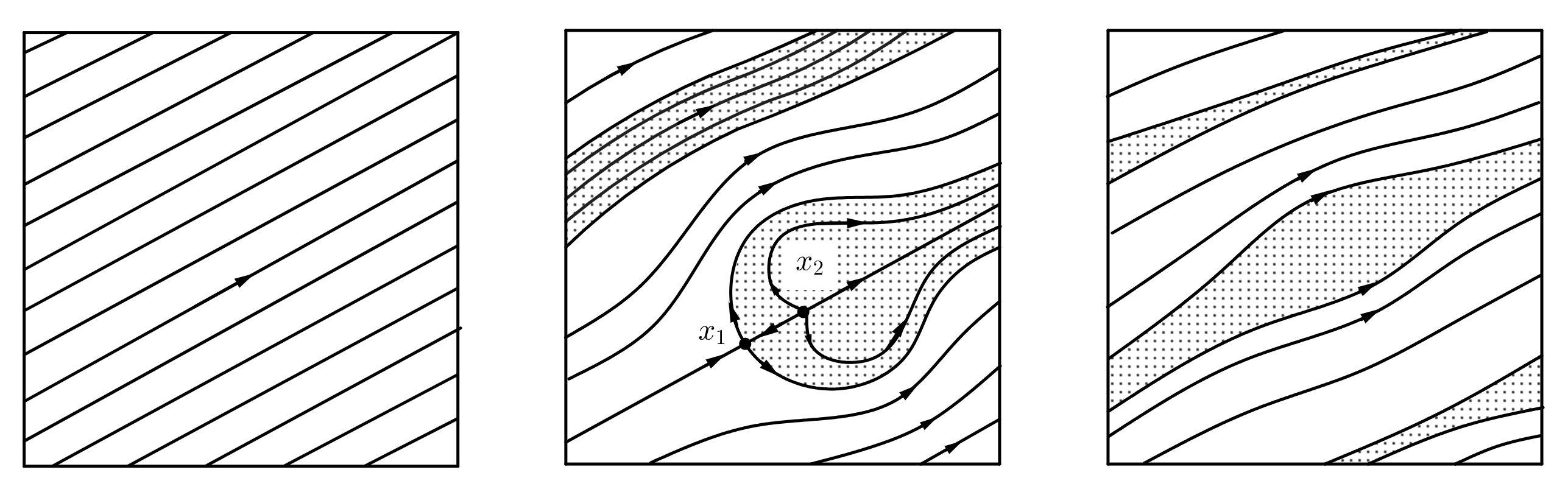}
\caption{\label{fig:flowstorus} Three examples of flows on the torus $\mathbb{T}^2=\mathbb{R}^2/\mathbb{Z}^2$: an irrational linear flow (left), a Cherry flow (center) and a Denjoy flow (right).}
\end{figure}

\subsubsection{Cherry flows}\label{sec:Cherryflow}
In 1937, T.~Cherry showed on the other hand in \cite{CherryFlow} that the Poincar{\'e} conjecture is true if one allows singularities: namely, he constructed a $C^{\infty}-$flow on the torus with a Cantor-like quasiminimal set and with two singularities, one of which is a saddle\footnote{A saddle singularity is a singularity with the same number of \textit{ingoing} and \textit{outgoing} separatrices, whereas for a source singularity, there are only outgoing separatrices} and one of which is a source.
Topologically, the Cherry flow on the torus is obtained from the irrational flow through choosing a point $p \in T^2$ and \textit{blowing up} only the forward (backward) trajectory of $p$, i.e replacing the forward (backward) trajectory with a band of shrinking size. One then places a source inside the strip and fills the band with orbits emitted from the source all going in the same direction. The original point $p$ is then replaced with a saddle singularity as shown in Figure 3. The Cantor-like quasiminimal $\Omega$ is then the closure of the complement of the band and for any $p \in T^2$, unless $p$ is a singularity or a a forward (backward) separatrix, it holds that $\omega(l(p)) = \Omega$ (resp. $\alpha(l(p)) = \Omega$).

\subsubsection{Attractors and repellors in genus one}\label{sec:attractorsrepulsors} A quasiminimal $\Omega$ of a flow $f^t$ is called an \textit{attractor} (\textit{repellor}) if there exists an invariant neighbourhood  $U \supset \Omega$ such that $\bigcap_{t \in \mathbb{R}_{>0}}f^{(-)t}(U) = \Omega$. One of the main differences between the Denjoy flow and the Cherry flow is that the quasiminimal $\Omega$ of a Cherry flow is an attractor (repellor) if one performs the forward (backward) construction, whereas the quasiminimal of a Denjoy flow is neither an attractor nor a repellor.

Levitt showed (see Theorem II.5, \cite{Levitt}) that in genus one, for $C^2$-foliations which are \textit{arational}, meaning foliations without closed trajectories and whose singularities satisfy certain restrictions\footnote{\label{fn:arational}i.e they are not allowed to be of thorn-saddle or thorn-thorn type, see also \cite{Levitt}.}, all quasiminimals whose closure is not a subsurface are either attractors or repellors.
He calls quasiminimals which are either attractors or repellors \textit{pure}, and quasiminimals which are neither attractors nor repellors \textit{non-pure}.



\subsection{Higher genus examples} On surfaces of genus $g\geq2$, in addition to generalizations of both the Denjoy and Cherry-type of construction (see \S~\ref{sec:DenjoyCherryHigherg}), one encounters new phenomena such as the coexistence of several quasiminimals (see \S~\ref{sec:2chambers}).

\subsubsection{Denjoy and Cherry-type construction in genus 2 for $C^1-$flows.\label{sec:DenjoyCherryHigherg}}
The Denjoy and Cherry-type constructions can be performed in higher genus too, namely one can build flows obtained by blowing up either a regular trajectory, or one (or more) seperatrices of a minimal flow; see \cite{Carrand} for explicit examples of $\mathcal{C}^1-$flows on a surface of genus two with an exceptional Cantor-like minimal set, obtained by perturbing the directional flow on a \emph{translation surface}\footnote{
A \emph{translation surface} is a surface with an atlas such that (outside a finite number of singular points) transition maps between two charts are translations, i.e.~maps of the form $z \to z + c$. Such surfaces can be obtained identifying pairs of parallel edges of a polygon in the plane as depicted in Figure \ref{fig:DenjoyCherryHigherg}.} of genus two.
In Figure \ref{fig:DenjoyCherryHigherg} (reproduced from \cite{Carrand}), one can see an example obtained by blowing up a regular trajectory (left) and one where all the separatrices emanating from a saddle (right) are blown up. The unique quasiminimal obtained in this way is in both cases, as for the Denjoy example in genus one, neither an attractor nor a repellor. 

\begin{figure}[h]
\centering
\includegraphics[width=0.39\textwidth]{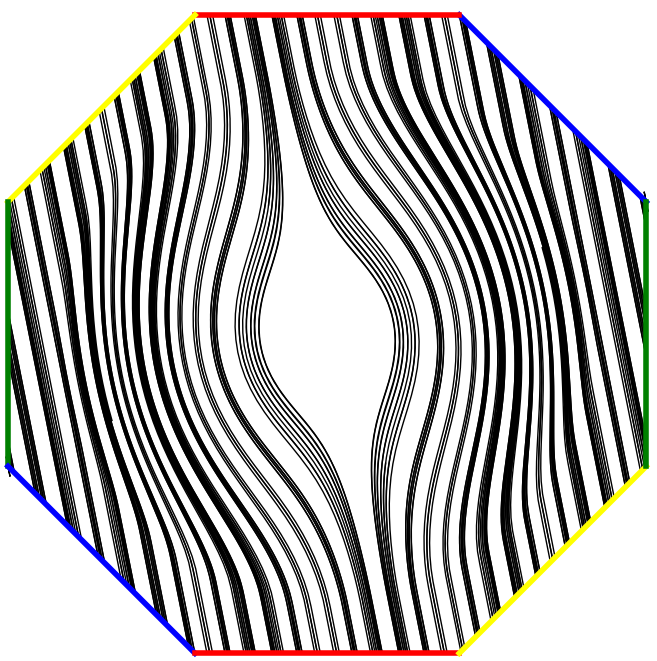}\hspace{10mm}
\includegraphics[width=0.39\textwidth]{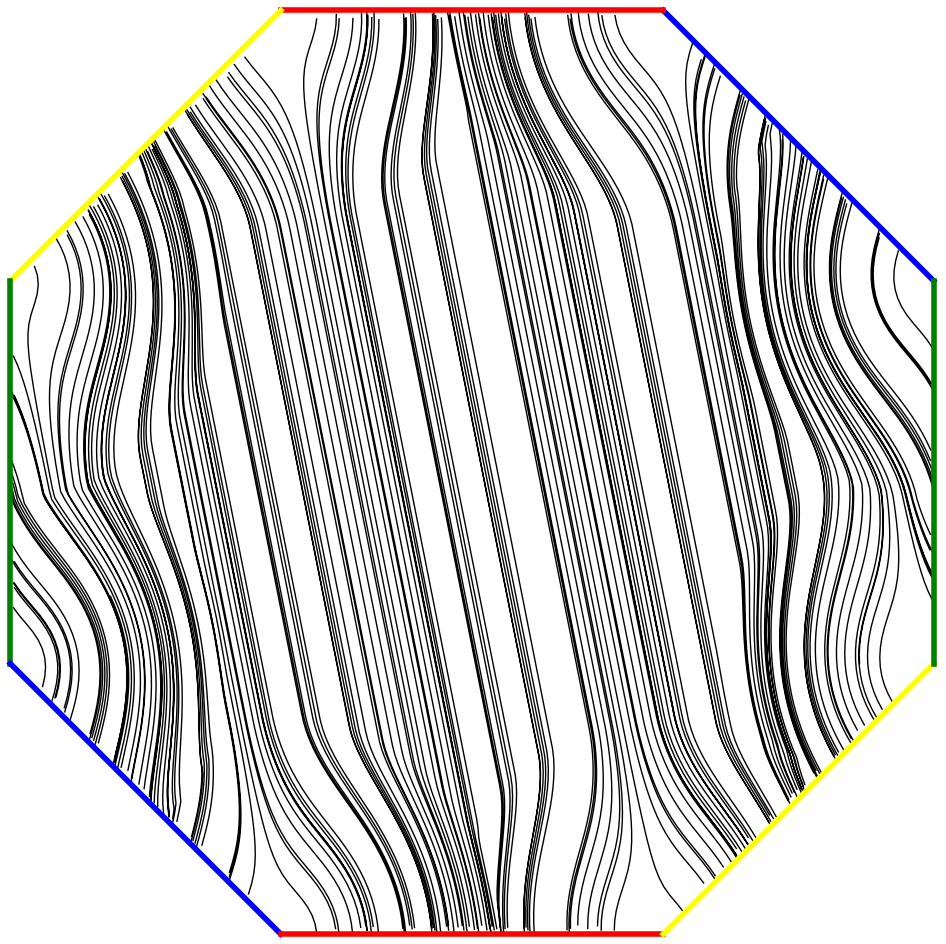}
\caption{\label{fig:DenjoyCherryHigherg} Denjoy-like and Cherry-like flows in $g=2$ (reproduced from \cite{Carrand}, courtesy of J.~Carrand).}
\end{figure}

\subsubsection{Existence of non-pure quasiminimals in higher genus}\label{sec:MMYquasiminimal} After Levitt proved that all quasiminimals in genus one for regularity $r \geq 2$ are \textit{pure} (see Section\S~\ref{sec:attractorsrepulsors}), he asked whether their exist nonpure quasiminimals in higher genus. To answer (affirmatively) this question, Camelier and Gutierrez in \cite{CAMELIER_GUTIERREZ_1997} constructed, using Rauzy-Veech induction, the first example of a uniquely ergodic AIET with a wandering interval; this map can be realized as the Poincar{\'e} map of the directional flow on a dilation surface
\footnote{A \emph{dilation} surface \label{footnote:dilation} is a translation surface for which the transition maps between two charts are affine maps, i.e.~maps of the form $z \to \lambda z + c$ where $\lambda \in \mathbb{R}_{>0}$.}
which contains a unique quasiminimal that is neither attracting nor repelling.  Similarly, the AIETs with wandering intervals produced by \cite{bressaud_persistence_2010} and, typically, by \cite{MarmiMoussaYoccoz} give examples in any genus $g\geq 2$.


\subsubsection{Cherry-type construction for smooth flows in higher genus} \label{sec:2chambers}
Finally, a higher genus example of a transversely affine foliation where two Cantor-like attractors coexist was recently studied in \cite{Dilationtori} as well as \cite{cascades} by Boulanger, Ghazouani, and the first author. Consider the genus two surface known as the \textit{two chamber surface}, obtained by identifying opposite edges of the same color of the polygon in Figure \ref{fig:twochambersurfacepol} using affine maps.
This surface may be cut along the pink dotted line (which is a simple closed curve on the surface after identifications, since all vertices of the polygon project to the same point) to obtain two \emph{chambers}; each chamber is, topologically, a genus one surface with boundary.

\begin{figure}[h]
\centering
\includegraphics[width=1.0\textwidth]{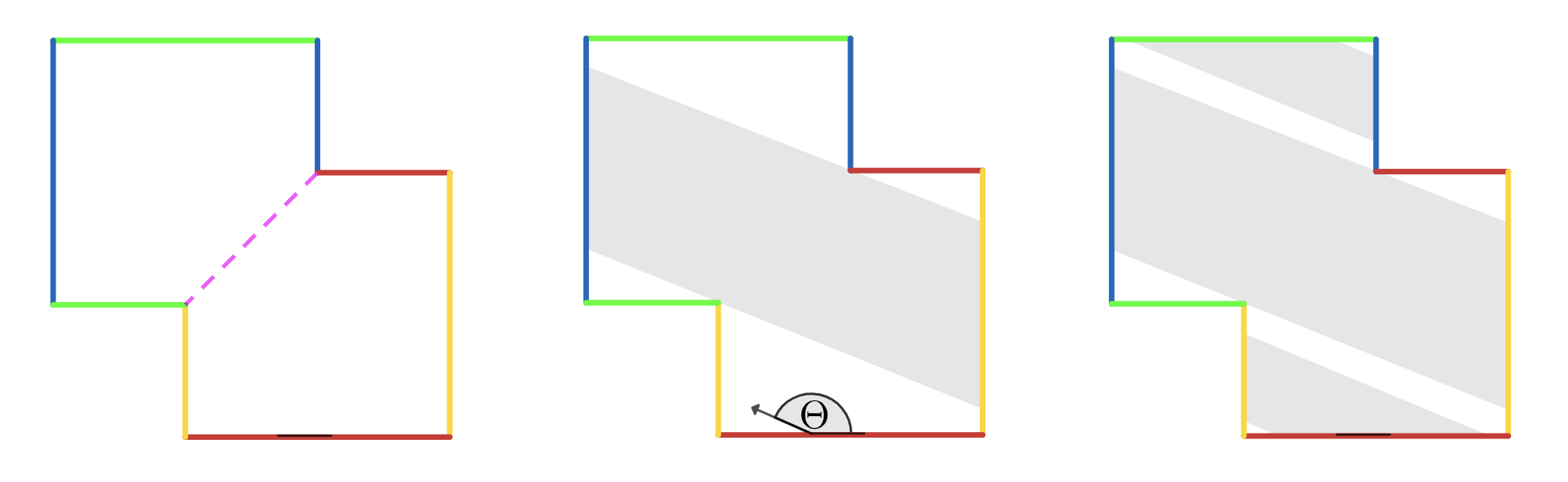}
\hspace{6mm}
\caption{The two chamber surface (polygonal representation). Consider the grey region in the center bounded by two parallel lines in direction $\theta$. For directions  $\theta \in S^1$ chosen in a carefully constructed exceptional set (uncountable, but of measure zero) the union of forward and backward iterates of this region with respect to $f^t_{\theta}$, where one iteration is depicted on the right, form an infinite band which \textit{winds} around the surface. Its complement consists of two Cantor sets $\Omega^+$ and $\Omega^-$, each contained in one of the chambers. }\label{fig:twochambersurfacepol}
\end{figure}
\begin{figure}[h]
\centering
\includegraphics[width=0.5\textwidth]{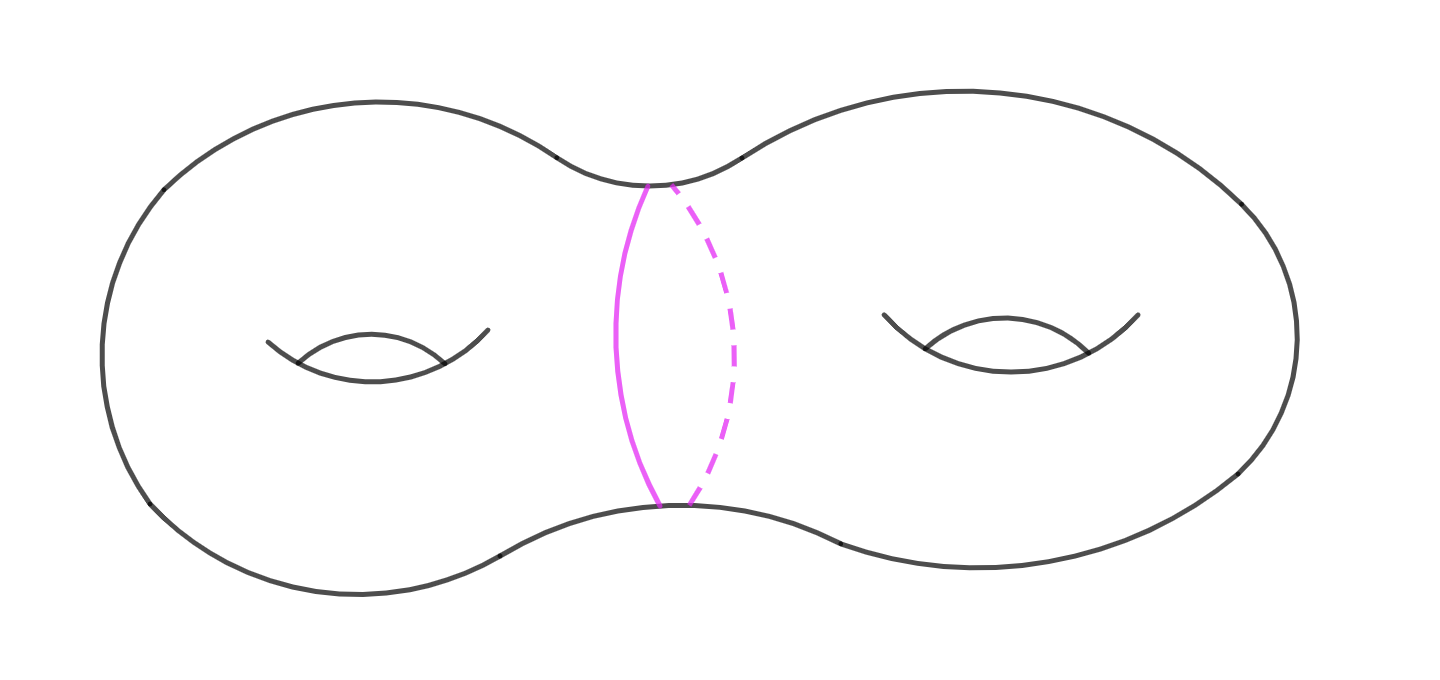}
\caption{Topological representation of the two chamber surface. The pink curve represents its dynamical decomposition (see also Theorem \ref{thm:gardiner}).}\label{fig:twochambersurfacetop}
\end{figure}


Consider the directional flow $f^t_{\theta}$, where $\theta \in S^1$. Notice that the top chamber is invariant under $f^t_{\theta}$, whereas the bottom chamber is invariant under $f^{-t}_{\theta}$ (or vice versa). It follows from the work in \cite{Dilationtori} and \cite{cascades} that for a full measure set of directions $\theta \in S^1$, the directional flow $f^t_{\theta}$ has two periodic orbits, one in each chamber, of which one is attracting and the other repelling. However, their work also shows that there exist infinitely many directions $\theta \in S^1$ (belonging to a Cantor set, of Lebesgue measure zero) for which $f^t_{\theta}$ has no saddle connections and for which there exists an invariant grey band as depicted in Figure \ref{fig:twochambersurfacetop} whose iterates are all disjoint and whose complement intersected with the top chamber forms an invariant Cantor-like quasiminimal $\Omega^+$, whereas its complement intersected with the bottom chamber forms an invariant Cantor-like quasiminimal $\Omega^-$. Since for any leaf $l$ which is not in $\Omega^+$ or $\Omega^-$ it holds that $\omega(l) = \Omega^+$ and $\alpha(l) = \Omega^-$ (see Figure \ref{fig:twochambersurfacetop}), it follows that $\Omega^+$ is an attractor, whereas $\Omega^-$ is a repellor. Note that the two quasiminimals may be \textit{separated} using the closed pink curve.

Topologically, the two chamber surface with this particular directional flow can be obtained by performing the Cherry construction on two tori, once for the forward flow to obtain a source within the wandering band and once for the backward flow to obtain a sink within the wandering band and then gluing the tori along the sink and source.

\subsection{Decomposition Theorems}\label{sec:decompositionthm} After it became known through the work of Cherry \cite{Cherry} and Maier \cite{Maier} that a flow on a surface of genus $g$ has at most $g$ quasiminimals, a natural question addressed throughout the 1980s was whether one can always find simple closed curves which separate the quasiminimals, as we have seen in the example of the two chamber surface. More precisely, the question was whether it is possible to decompose a flow into \emph{components}, obtained by restricting the flow to subsurfaces (with boundary) on which the flow either contains no quasiminimals or is \emph{(dynamically) irreducible}\footnote{A flow on a surface $S$ is called \textit{irreducible} if there is a unique quasiminimal and for every homotopically non-trivial closed curve $c$ on $S$ there exists $p \in S$ such that the forward trajectory through $p$ is non-trivially recurrent and intersects $c$.},
meaning morally that there exists a unique quasiminimal that \textit{fills} the entire space. The following result was shown by Gardiner \cite{gardiner} in 1985, after G. Levitt proved a similar result for foliations that we introduce below, and became later known as the \textit{Gardiner-Levitt decomposition}.

\begin{thm}[Gardiner, \cite{gardiner}]\label{thm:gardiner}
    Given a continuous flow on a surface with finitely many singularities, there exists a finite set $\mathcal{C}$ of homotopically non-trivial, disjoint, closed curves on $S$ such that the connected components of $S \backslash \mathcal{C}$ are either irreducible or do not contain a quasiminimal.
\end{thm}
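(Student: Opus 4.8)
The plan is to deduce the surface decomposition from the interval-level Decomposition and Structure Theorems (Theorems~\ref{thm:decomposition} and~\ref{thm:structure}) applied to a well-chosen Poincaré map of $f^t$. First I would fix a transversal: since by Maier's theorem every trajectory inside a quasiminimal is dense in it, and since a genus-$g$ surface carries at most $g$ quasiminimals $\Omega_1,\dots,\Omega_N$, I can choose a single transversal segment $I$ (or a finite disjoint union of segments, which one concatenates into a single g-GIET) meeting the interior of every quasiminimal. The first return map $T\colon I^t\to I$ is then a g-GIET to which the machinery of the paper applies, and each $\Omega_j$ leaves a recurrent trace on $I$.

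Next I would apply Theorem~\ref{thm:decomposition} to obtain $I=D\sqcup\bigsqcup_{i=1}^k R_i$ with $D$ a domain of transition and each $R_i$ either a periodic or a quasiminimal domain. By the correspondence between recurrent orbits of $T$ and non-trivially recurrent trajectories of $f^t$, the quasiminimal domains are in bijection with the quasiminimals $\Omega_1,\dots,\Omega_N$ of the flow, while the periodic domains and the transition domain contain none. The Structure Theorem~\ref{thm:structure} then tells me that over each quasiminimal domain $R_i$ the induced g-GIET $T_i$ has infinite complete combinatorial rotation number and is semiconjugated to a minimal IET.

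The heart of the argument is to convert the endpoints of the domains into the separating curves $\mathcal{C}$. The boundary of each domain $R_i\subset I$ consists of finitely many interval endpoints; flowing these endpoints forward and backward until they next meet $I$, I obtain finitely many flow arcs, and the orbits of boundary points either hit a singularity (yielding saddle connections or separatrices) or return to $I$. Concatenating these flow arcs with the subarcs of $I$ lying between consecutive returns, I would close them up into finitely many piecewise-smooth simple closed curves $\mathcal{C}_i$ bounding the flow-saturation $S_i$ of $R_i$; here the tower representation furnished by Theorem~\ref{thm:structure} is exactly what guarantees that the saturation is a subsurface and that its boundary closes up combinatorially. Setting $\mathcal{C}=\bigcup_i\mathcal{C}_i$ and discarding any null-homotopic curve, the complementary components are the $S_i$ together with the regions over $D$ and over the periodic domains.

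Finally I would verify the conclusion on each component. A component $S_i$ carrying a quasiminimal domain contains exactly one quasiminimal $\Omega_i$; the semiconjugacy to a minimal IET supplied by Theorem~\ref{thm:structure} forces this quasiminimal to fill $S_i$, so that every homotopically non-trivial curve in $S_i$ is crossed by a non-trivially recurrent trajectory, i.e.\ $S_i$ is irreducible. The components coming from $D$ and from the periodic domains contain no non-trivially recurrent trajectory by construction, hence no quasiminimal. The main obstacle I anticipate is precisely this realization step: showing that the orbits of the domain endpoints are saddle connections, separatrices, or first-return arcs that genuinely close up into \emph{disjoint}, \emph{homotopically non-trivial} simple closed curves, and handling the degenerate cases (endpoints of $I$, singularities lying on $I$, and the interface between the transition domain and the recurrent domains) so that no curve cuts through a quasiminimal. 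This is the point at which the purely combinatorial output of the Poincaré-map decomposition must be matched carefully with the topology of $S$.
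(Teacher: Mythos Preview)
The paper does not give its own proof of this theorem: it is stated in \S\ref{sec:decompositionthm} as Gardiner's result with a citation to \cite{gardiner}, serving as historical context and motivation. So there is no proof in the paper to compare your proposal against.

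That said, the paper does contain, in \S6.4.2, a brief sketch of how to recover a surface-level decomposition (attributed there to Gutierrez, not Gardiner) from Theorems~\ref{thm:decomposition} and~\ref{thm:structure}, and your plan is essentially the same as that sketch: take a transversal, apply the interval-level decomposition, and saturate the recurrence domains $R_i$ by flow arcs to obtain subsurfaces $S_{R_i}$ with boundary. The paper observes that the boundaries $\partial S_{R_i}$ so obtained are unions of saddle connections, which matches what you anticipate in your ``realization step.''

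One point worth flagging: Gardiner's statement asks specifically for a finite family of homotopically non-trivial, pairwise disjoint, simple closed curves $\mathcal{C}$, whereas the flow-saturation construction (both yours and the paper's sketch) naturally produces subsurfaces whose boundaries are concatenations of saddle connections. Converting these into the curves $\mathcal{C}$ of Gardiner's formulation, verifying disjointness and non-triviality, and ruling out the degenerate cases you list is genuine topological work that neither the paper's sketch nor your proposal carries out in detail. You are right to identify this as the main obstacle; the paper simply does not address it, since its aim is the interval-level result and it only gestures at the surface picture in the remarks.
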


A few years earlier, in 1982, G. Levitt studied  \textit{arational} foliations (see Footnote \ref{fn:arational}) on surfaces. He proved in in \cite{levitt_feuilletages_1982} and \cite{Levitt} that, after certain homotopy operations, the above decomposition can be obtained with the help of curves that are everywhere transversal. He does not use the term irreducible, instead, his connected components of $S \backslash \mathcal{C}$ are either a \textit{region of recurrence} or a \textit{region of transition}, a definition which we will re-use in \S~\ref{sec:domains} in the context of g-GIETs. He defines a region of recurrence to be a region with a unique quasiminimal where no leaf travels from boundary to boundary and where the limit set of any regular leaf is equal to the quasiminimal itself. A region of transition instead is a region in which every leaf travels from boundary to boundary.

  In 1986, Gutierrez established another structure theorem for any given flow on a surface $S$ (\cite{gutierrez}), this time relating the decomposition to interval exchange transformations. He proved that there exist finitely many open, connected and disjoint subsets $V_i$ of $S$ which he also calls \textit{regions of recurrence}, where each of these regions contains a circle transversal to the flow on which the first return map is either conjugated or semi-conjugated to a minimal interval exchange transformation and where there is no trajectory lying in $V_i$ and connecting two points on the boundary of $V_i$. In some sense, his result is the closest to our own decomposition for GIETs, however, the techniques we use to prove our results are very different and rely entirely on combinatorial arguments at the level of the Poincaré map.

\section{Interval exchange tranformations with gaps}\label{sec:gGIETsec}

In this chapter, we introduce \textit{interval exchange transformations with gaps} (g-GIETs). 
 We first give the formal definition of a g-GIET in \S~\ref{sec:gGIETsdef} and then explain why these are natural objects to study, see \S~\ref{sec:gGIETmotivation}.
We further classify in \S~\ref{sec:gGIETsorbits} the types of orbits such a map may have and define key notions such as \textit{the first return map}. In \S~\ref{sec:recurrence}, we further define quasiminimals for g-GIETs as well as domains of recurrence and transition. These definitions are accompanied by example decompositions of two GIETs according to Theorem \ref{thm:decomposition}. Finally, in \S~\ref{sec:towers}, we introduce the \textit{tower representation} of a g-GIET together with an example of a \textit{base} g-GIET which satisfies Theorem 1.2.

\subsection{Definition of a g-GIET and related notions}
\subsubsection{Formal definition} \label{sec:gGIETsdef}
An interval exchange transformation with gaps is defined as follows:

\begin{defs}\label{def:gGIETs}  Let $d \geq 2$ be an integer. A \textit{$C^r$-generalized interval exchange transformation with gaps (g-GIET)} of $d$ intervals is a map $T:I^t \to I^b$ where $I^t, I^b \subset [0,1)$, $|\mathcal{A}| = d$ and
\begin{enumerate}[label = \roman*)]
\item $I^t = \bigsqcup_{\alpha \in \mathcal{A}}I_{\alpha}^t$ and $I^b = \bigsqcup_{\alpha \in \mathcal{A}} I_{\alpha}^b$ are a disjoint union of $d$ right-open\footnote{We chose the convention of right-open instead of open intervals in the definition of g-GIETs since it simplifies the presentation of the proofs. However, using iii) one may easily pass from one convention to the other, and all results obtained in this paper apply equally to maps defined using open intervals.} subintervals $\{ I_{\alpha}^t\}_{\alpha \in \mathcal{A}}$, $\{ I_{\alpha}^b\}_{\alpha \in \mathcal{A}}$ called the \textit{top} and \textit{bottom} intervals;
\item for each $\alpha \in \mathcal{A}$, the map $T_{\alpha}$ obtained by restricting $T$ to $I_{\alpha}^t$ is an orientation preserving $C^r$-differomorphism onto $I_{\alpha}^b$ of class $C^r$;
\item each $T_{\alpha}$ extends on the closure of $I_{\alpha}^t$ to a $C^r$-diffeomorphism onto the closure of $I_{\alpha}^b = T(I_{\alpha}^t)$.
\end{enumerate}
\end{defs}

 The connected components of $(I^t)^c$ are called the \textit{top gaps}, whereas the connected components of $(I^b)^c$ are the \textit{bottom gaps}. For $\alpha \in \mathcal{A}$, we further call the restriction $T_{\alpha} = T|_{I_{\alpha}^t}$ of $T$ onto $I_{\alpha}^t$ a \textit{branch} of $T$. The \textit{inverse} of a g-GIET $T:I^t \to I^b$ is the map
\begin{align*}
    T^{-1}: I^b \to I^t \\
    x \to T^{-1}(x)
\end{align*}
which is again a g-GIET on $d$ intervals with with the property that $T^{-1} \circ T : I^t \to I^b$ and $T \circ T^{-1}: I^b \to I^t$.
The graph of a g-IET and of its inverse are shown in Figure~\ref{fig:gGIETgraph}. In the rest of the paper we will draw only the intervals of $I^t$ and $I^b$ using colors to indentify those that are mapped to each other (and if needed indicate the image of specific points), but we will not specify the diffeomorphisms on each domain of continuity.

\begin{figure}[h]
\centering
\includegraphics[width=0.45 \textwidth]{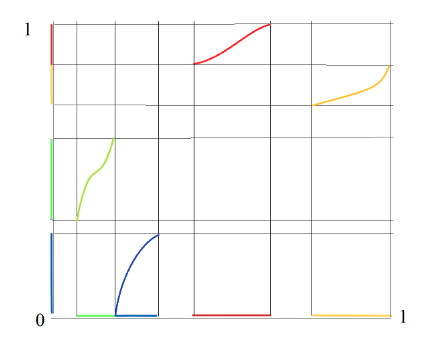}
\caption{The graph of a g-GIET with $4$ intervals. \label{fig:gGIETgraph}}
\end{figure}

\subsubsection{Right-open vs left-open g-GIET}\label{sec:rightopenvsleftopen} A map which satisfies Definition \ref{def:gGIETs}, where right-open intervals are replaced with left-open intervals, is called a \textit{left-open} g-GIET, whereas a g-GIET as in Definition \ref{def:gGIETs} may also be called a right-open g-GIET. Note that for each right-open g-GIET $T$, we obtain a unique left-open g-GIET $\overline{T}$ by first restricting $T$ to the interior of the intervals $I_\alpha^t$ and then extending it by continuity  to the right endpoint of each $I_\alpha^t$, which is possible in view of Assumption iii) in Definition \ref{def:gGIETs}.

\subsubsection{Motivation to study g-GIETs}\label{sec:gGIETmotivation}
Let $S$ be a surface with an orientable $C^r$ flow $f^t$, let $I$ be a transversal segment and consider the first return map $T$ to the segment $I$. For a point $p \in I$, there are two reasons why the forward trajectory through $p$ would not return to $I$: either the forward trajectory passes through a fixed point of the flow, or it vanishes into some other part of the surface and never crosses $I$ again. In the first case, the first return map has  a singularity at $p$ (given that the map is still defined on a punctured neighbourhood around $p$), whereas in the second case $p$ is contained in an open subinterval, also called a \textit{gap}, on which $T$ is not defined. As depicted in Figure 4, the map $T$ is hence piecewise $C^r$-differentiable and has finitely many intervals of continuity, singularities and gaps. From this map, we obtain a g-GIET as in our definition by extending the map to the left endpoint on each interval of continuity.

\begin{figure}[h]
\centering
\includegraphics[width=0.85 \textwidth]{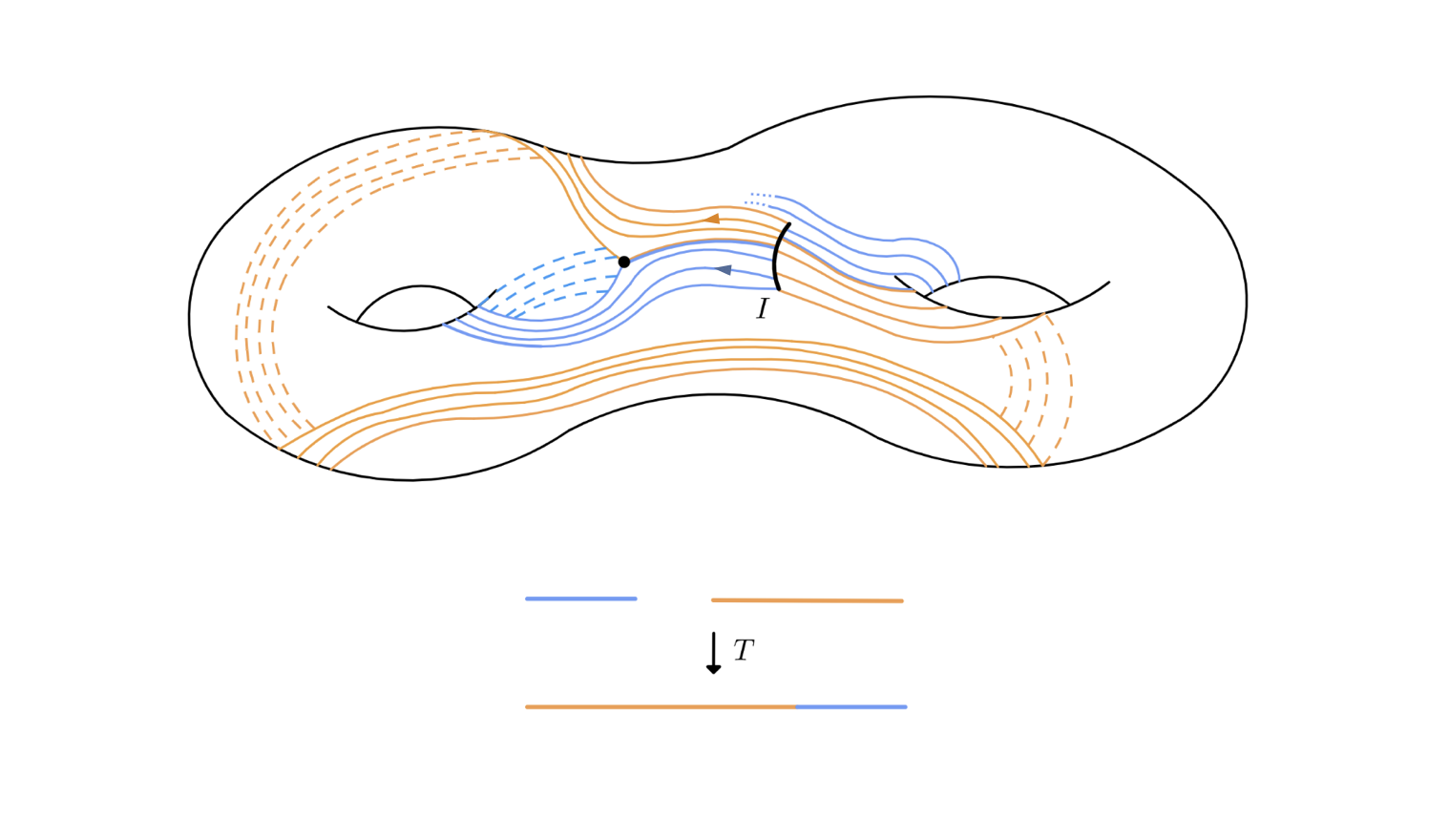}
\caption{The first return map to $I$ with two intervals of continuity and one gap arising from the blue trajectories which do not return to $I$.}
\end{figure}
\vspace{3mm}

\subsubsection{Standard and affine interval exchange transformations with gaps} Special cases of g-GIET's include standard interval exchange transformations with gaps (g-IET's) and affine interval exchange transformations with gaps (g-AIET's):

\begin{defs}
    A g-GIET $T$ is a \textit{(standard) interval exchange transformation with gaps} or an \textit{g-IET} if, for every $\alpha \in \mathcal{A}$,  $|I_{\alpha}^t|$ = $|I_{\alpha}^b|$ and the branches $T_{\alpha}$ of the map $T$ are translations, i.e of the form $ x \to x + \delta_{\alpha}$ for some $\delta_{\alpha} \in \mathbb{R}$.
\end{defs}

\begin{defs}
    A g-AIET $T$ is an \textit{affine interval exchange transformation with gaps} or an \textit{g-AIET} if the branches $T_{\alpha}$ of the map $T$, for every $\alpha \in \mathcal{A}$, are affine maps, i.e of the form $ x \to \lambda_{\alpha} x + \delta_{\alpha}$ for some $\lambda_{\alpha}, \delta_{\alpha} \in \mathbb{R}$, $\lambda_{\alpha} > 0$.
\end{defs}

When the flow is a measured flow, one can choose coordinates so that the first return map is a g-IET, while g-AIET's are first return maps of \textit{transversally affine foliations}. We remark also that g-IETs arise as the first return map of the directional flow on \textit{translation surfaces}, i.e surfaces obtained from glueing pairwise parallel edges of a polygon in the plane via translations. Similarily, g-AIETs arise as the first return map of the directional flow on \textit{dilation surfaces}, surfaces obtained from glueing pairwise parallel edges of a polygon in the plane via translations and dilations (see  Figure~\ref{fig:twochambtransverse} for an example).

\begin{defs}\label{def:GIET} A g-GIET (or g-AIET, g-IET) for which $I^t = I^b = [0,1)$ is called a \textit{generalized interval exchange transformation (GIET)} (or \textit{affine interval exchange transformation (AIET)}, or \textit{(standard) interval exchange transformation (IET))}.
\end{defs}

\subsubsection{Combinatorial data}
\label{sssect:comb}
For $T:I^t \to I^b$ a g-GIET on $d$ intervals, we think of $\alpha \in \mathcal{A}$ as the \textit{label} of the intervals $I_{\alpha}^t$ and $I_{\alpha}^b = T(I_{\alpha}^t)$ and call $\mathcal{A}$ the \textit{alphabet} consisting of labels.
To encode the order of the intervals and gaps (from left to right) at the top and bottom partition of a g-GIET, we adopt the convention\footnote{This use of two (rather than one only) permutation  became standard after its introduction in \cite{MMY_Cohomological} since it allows to keep track of \emph{labels} of intervals and is essential for certain definition (such as $\infty$-completeness, see Definition~\ref{def:infcomplete}).} of using two bijections
\begin{align*}
  \pi_t, \pi_b: \mathcal{A} \to \{1, . . . , d\},
\end{align*}
where $d$ is the number of intervals of $T$ and the maps 
$\pi_t$ and $\pi_b$, called the \textit{top (bottom) permutation}, are injective. The permutation $\pi_t$ (resp. $\pi_b$) describes the order of the intervals in the top (resp. bottom) partition, so that the order of the top (resp.~bottom)  intervals from left to right is $I_{\pi^{-1}_t(1)} , I_{\pi^{-1}_t(2)} , \dots I_{ \pi^{-1}_t(d)}$ (resp.~$I_{\pi^{-1}_b(1)}, I_{\pi^{-1}_b(2)} , \dots , I_ {\pi^{-1}_b(d)}$). 
We call the datum $\pi := (\pi_t, \pi_b)$ of this pair of permutations the \textit{combinatorial datum} of $T$, or simply the \textit{permutation} of $T$.
The combinatorial data is usually recorded writing:
\[
	\begin{pmatrix}
		\pi^{-1}_t(1) & \dots & \pi^{-1}_t(d)\\
		\pi^{-1}_b(1) & \dots & \pi^{-1}_b(d)
	\end{pmatrix}.
\]


\begin{rem}\label{rk:extended_combinatoics}
A different (more complete) choice of combinatorial data which allow to encode not only the order of the intervals, but also the \emph{location} of the gaps is possible: if $d_t$ (resp.~$d_b$) records the combined number of top (resp.~bottom) intervals \emph{and} top gaps, 
we can use bijections 
$ \pi_t: \mathcal{A} \to \{1, . . . , d_t\}$ and $
    \pi_b:  \mathcal{A} \to \{1, . . . , d_b\}$ to describe the position of the interval $I^t_{\alpha}$, where integers with no preimage in $\mathcal A$ correspond to the location of the gaps. We refer to this pair of permutation as the \textit{extended} combinatorial datum.
While we do not need this  convention for the purposes of this paper, it is the right encoding for certain questions (see e.g. \S~\ref{sec:extended_classes}).
\end{rem}

\subsection{Orbits of g-GIETs}\label{sec:gGIETsorbits}

The definition of an orbit of a g-GIETs is more complicated than in the case of GIETs because of the gaps in the domain of definition as well as in the image.

\subsubsection{Orbit of a point}Given a point $x \in I^t$, note that for $T^2(x)= T(T(x))$ to be defined we need that $T(x)\in I^t$ i.e.~that $x \in T^{-1}(I^t)=I^b$. Thus, the domain of definition of $T^2$, that we denote by $D(T^2)$, is $D(T^2)=I^t\cap I^b$. By induction, for any integer $n\geq 2$, the domain $D(T^n)$ of definition of $T^n$ is:
$$D(T^n):= \{ x\in I^t \text{ s.t } \; T(x), T^2(x), \dots, T^{n-1}(x) \in I^t \cap I^b  \}. $$
Similarly, the domain $D(T^{-n})$ of definition of backward iterates is
$$D(T^{-n}):= \{ x\in I^b \text{ s.t } \; T^{-1}(x), T^{-2}(x), \dots, T^{-(n-1)}(x) \in I^t \cap I^b  \}. $$

We may now define the forward orbit $\mathcal{O^+}(x)$ of a point $x \in I^t$ (resp. the backward orbit $\mathcal{O^-}(x)$ of a point $x \in I^b$):
\begin{align*}
    \mathcal{O^+}(x) &:= \{y \in I^b \hspace{1mm} | \hspace{1mm} \exists n \in \mathbb{N} \; \text{s.t} \;x\in D(T^n)  \text{and} \; T^{n}(x) = y \} \cup \{x\}\\
    \mathcal{O^-}(x) &:= \{y \in I^t \hspace{1mm} | \hspace{1mm} \exists n \in \mathbb{N} \; \text{s.t} \; x \in D(T^{-n}(x))
    \text{and} \; T^{-n}(x) = y \} \cup \{x\}.
\end{align*}
The \textit{full orbit} $\mathcal{O}(x)$ is then defined by
$$\mathcal{O}(x) := \mathcal{O}^+(x) \cup \mathcal{O}^-(x).$$

If $\mathcal{O}^+(x)$ is finite, we call $T^{n}(x)$ the \textit{forward endpoint} of $\mathcal{O}(x)$, where $n$ is the largest integer for which $T^{n}(x) \in \mathcal{O}(x)$, similarily, if $\mathcal{O}^-(x)$ is finite, we call $T^{-m}(x)$ the \textit{backward endpoint} of $\mathcal{O}(x)$, where $m$ is the largest integer for which $T^{-m}(x) \in \mathcal{O}(x)$. Note that $\mathcal{O}(x)$ is contained in $I^t \cap I^b$ except for its endpoints which are always contained in a gap.

\subsubsection{Pictorial representation of orbits}
In the illustrations throughout the paper, we adopt the following  convention to represent the map and its orbits. We draw the intervals in $I^t$ on the top and the intervals in $I^b$ at the bottom. The map then takes a point from a top interval to a point in a bottom interval (of the same color) which is then in turn again identified back to a point in the top interval, as indicated by the vertical arrows pointing upwards (see e.g. ~Figure~\ref{fig:transient}). Arrows which appear to \textit{enter} or \textit{leave} the g-GIET hence correspond to orbits whose backward (forward) endpoints lie in a gap.



\subsubsection{Different types of orbits}\label{sec:typeorbits} We say that $\mathcal{O}(x)$ is \textit{regular} if $\mathcal{O}(x) \subset I^t\cap I^b$. If $\mathcal{O}(x)$ is regular and finite, we call $\mathcal{O}(x)$ a \textit{periodic} orbit. Among orbits which are \textit{not} regular, we say that
$\mathcal{O}(x)$ is \textit{transient} if $\mathcal{O}(x)$ is finite and its forward and backward endpoint lie in a top and in a bottom gap respectively (see Figure~\ref{fig:transient}).


\begin{figure}[h]
\centering
\includegraphics[width=0.6\textwidth]{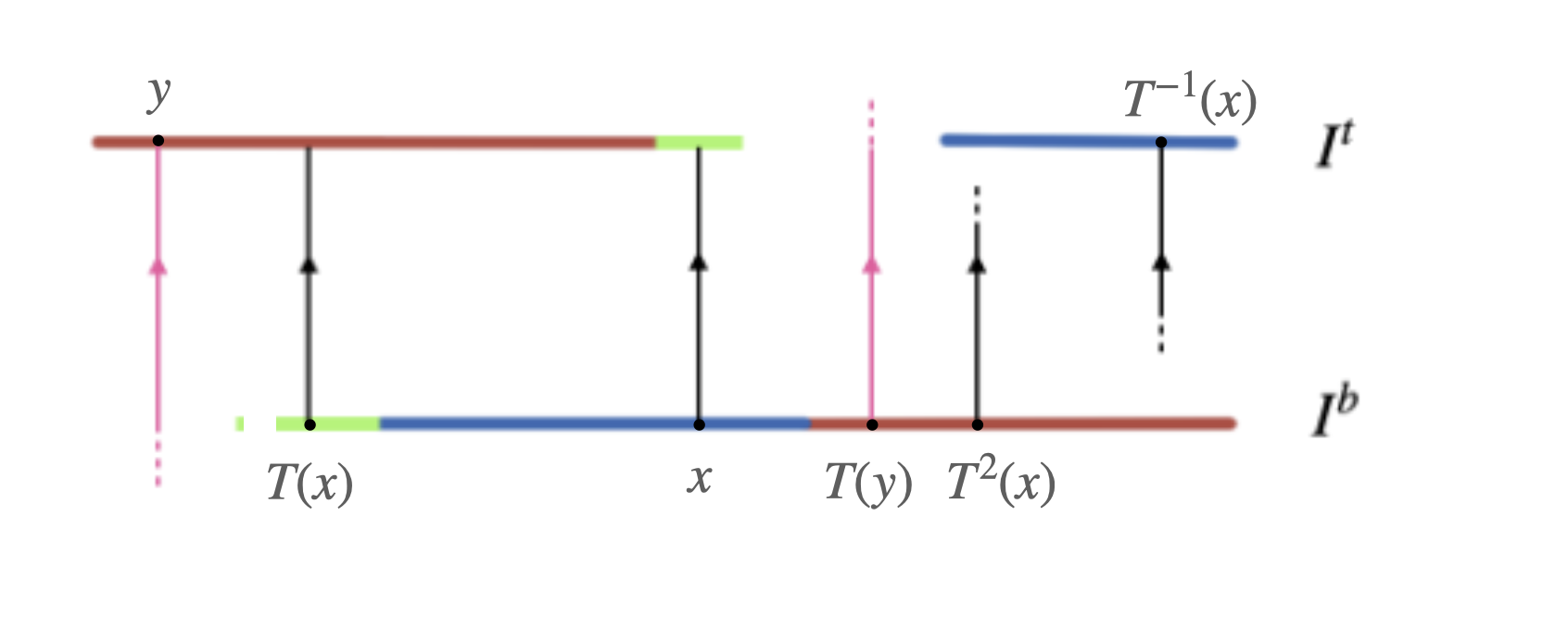}
\caption{\label{fig:transient} A g-GIET on three intervals. The pink orbit is a transient orbit, whereas the black orbit is not.}
\end{figure}


\smallskip
We further define the notion of an \textit{orbit at an endpoint} as follows:

\begin{defs}\label{def:orbitatendpoint} Let $T:I^t \to I^b$ be a g-GIET and let $\overline{T}$ be the corresponding left-open g-GIET as defined in Section \S~\ref{sec:rightopenvsleftopen}. Then a periodic orbit for $T$ which contains at least one left endpoint of an interval is called an \textit{periodic orbit at a left endpoint of $T$}. A periodic orbit for $\overline{T}$ which contains at least one right endpoint of an interval is called a \textit{periodic orbit at a right endpoint of $T$}.
\end{defs}
\begin{rem}
Note that for a g-GIET $T$, a periodic orbit at a left endpoint is a regular orbit. However, on a surface whose first return map (extended by continuity to the left endpoints) yields this g-GIET, the trajectory corresponding to this periodic orbit is in fact a concatenation of saddle connections.

\end{rem}




\subsubsection{First return map of a g-GIET}\label{sec:Poincaremap} Let $T:I^t \to I^b$ be a g-GIET and consider a subinterval $J \subset I^t$. Let $J^t :=  \{x \in J \hspace{1mm}|\hspace{1mm} \mathcal{O}^+(x) \backslash \{x\} \cap J \neq \emptyset \}$.
Thus, $J^t$ contains all points which \emph{return} to $J$ under $T$ and is the domain of the first return map.
For $x \in J^t$, define
\begin{align*}
    n(x) := \text{min}\{i \in \mathbb{N} \hspace{1mm} | \hspace{1mm} T^i(x) \in J \}.
\end{align*}
Then the \textit{first return map} $T_J$ of $T$ to $J$ is then defined as
\begin{align*}
    T_J: J^t &\to T_J(J^t)
    \\
    x &\to T^{n(x)}(x)
\end{align*}
We set $J^b := T_J(J^t) \subset J$. One can verify that $T_J: J^t \to J^b$ is again a g-GIET with top and bottom intervals
$$J^t = \bigsqcup_{\beta \in \mathcal{B}}J_{\beta}^t \quad \text{and} \quad
J^b = \bigsqcup_{\beta \in \mathcal{B}}J_{\beta}^b,
$$
where $T_J(J_{\beta}^t) = J_{\beta}^b$ and all points in $J_{\beta}^t$ share the same return time denoted by $n_{\beta}$ for all $\beta \in \mathcal{B}$.

\begin{rem}\label{rk:number_intervals_Poincare}
One can check that if $T$ is a g-GIET of $d$ intervals,
the first return map $T_j$ on the subinterval $J$ has at most $d+2$ intervals, i.e.~ $|\mathcal{B}| \leq  |\mathcal{A}|+2$.
Note though that any of the cases $|\mathcal{A}| = |\mathcal{B}|$, $|\mathcal{A}| > |\mathcal{B}|$ or $|\mathcal{A}| < |\mathcal{B}|$ may occur.
\end{rem}
\subsection{Recurrent trajectories and quasiminimals}\label{sec:recurrence} We repeat here, for the context of g-GIETs, definitions analogous to those given already for flows on surfaces in Section \S~\ref{sec:flowsintro} such as limit sets and quasiminimals and define domains of transition and recurrence.

\subsubsection{Limit sets}\label{sec:limitset}
Let $T: I^t \to I^b$ be a g-GIET and let $x \in I^t$. If $\mathcal{O}^+(x)$ is infinite then the $\omega$-limit set $\omega(x)$ is the set of accumulation points of $\mathcal{O^+}(x)$ in $I^t$. If $\mathcal{O}^-(x)$ is infinite then the $\alpha$-limit set $\alpha(x)$ is the set of accumulation points of $\mathcal{O^-}(x)$ in $I^t$.

A point $x \in I^t$ is said to be \textit{$\omega$-recurrent} if $\mathcal{O}^+(x)$ is regular and $x \in \omega(x)$, it is said to be \textit{$\alpha$-recurrent} if $\mathcal{O}^-(x)$ is regular and $x \in \alpha(x)$ and \textit{recurrent} if $x$ is both $\omega$- and $\alpha$-recurrent. If $x$ is recurrent, then any point of the $\mathcal{O}(x)$ is also recurrent since limit sets are invariant under $T$.
We can split recurrent orbits into two types: a \textit{trivially recurrent orbit} is a periodic orbit. All other recurrent orbits are called \textit{non-trivially recurrent}.

\subsubsection{Quasiminimals and wandering intervals}
As for flows on surfaces, a quasiminimal is defined as the closure of a non-trivially recurrent orbit:
\begin{defs} Let $T:I^t \to I^b$ be a g-GIET. The topological closure (in $I^t$) of a (trivially or non-trivially) recurrent orbit is a \textit{recurrent orbit closure} of $T$. The topological closure (in $I^t$) of a non-trivially recurrent orbit is a \textit{quasiminimal} of $T$.
\end{defs}



\noindent We can also define the notion of a wandering interval:

\begin{defs}\label{def:wanderinginterval} Let $T:I^t \to I^b$ be a g-GIET. We say that $J \subset I^t$ is a \emph{forward} (\emph{backward}) \emph{wandering interval} if all iterates $\{ T^n(J) \,| \, n \in \mathbb{N} \}$ ($\{ T^{-n}(J)\,  | \, n \in \mathbb{N} \}$) are well-defined and disjoint. We say that $J$ is a \emph{wandering interval} if it is both a forward and a backward wandering interval.
\end{defs}

 \begin{rem} When the g-GIET is obtained from the Poincar{\'e} map to a transversal segment of a flow on a surface, the orbit of a wandering interval is given by the intersection of the transversal segment with the \emph{bands} which are \emph{blown up} from a trajectory or a separatrix of a minimal flow in the Denjoy or Cherry-like examples described in sections \S~\ref{sec:Denjoyflow}, \S~\ref{sec:Cherryflow}, and  \S~\ref{sec:DenjoyCherryHigherg}.
 \end{rem}


\subsubsection{Examples of trivial orbit closures} \label{sec:ex_periodic}
To give examples of trivially recurrent orbits (i.e.~periodic orbits) one can use the following simple remark: if there exists $\alpha\in \mathcal{A}$ such that $I^b_\alpha\subset I^t_\alpha$ (or viceversa, $I^b_\alpha\subset I^t_\alpha$), then, since $T(I^t_\alpha)\subset I^t_\alpha$ (or $T^{-1}(I^b_\alpha)\subset I^b_\alpha$), there exists, by a classical fixed point theorem, a fixed point $x_\alpha\in I^t_\alpha\cap I^t_\alpha$  for $T$, i.e.~$T(x_\alpha)=x_\alpha$ and hence a trivially recurrent orbit.

\begin{figure}[h]
\centering
\includegraphics[width=0.4\textwidth]{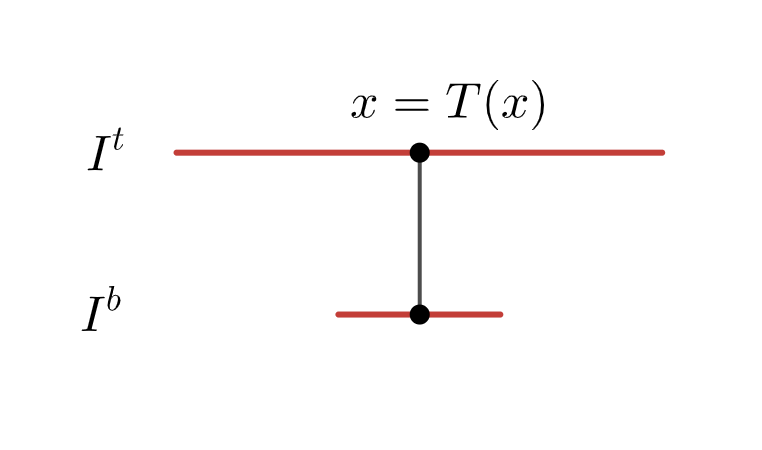}
\caption{A g-GIET on one interval with a fixed point. \label{fig:trivrec} }
\end{figure}


\subsubsection{Examples of non-trivial recurrent orbit closures} \label{sec:ex_quasiminimal}
An example of a g-GIET with a non-trivially recurrent orbit closure arises as the first return map for the directional flow $f^t_{\theta}$ on the transversal segment $I$ of the two chamber surface (see Figure \ref{fig:twochambtransverse}). It follows from the results in \cite{ghazouani}, \cite{Dilationtori} as explained in \S~\ref{sec:2chambers} that for a measure zero set of directions $\theta \in S^1$ the first return map of the flow $f^t_{\theta}$ to the segment $I$ is a g-GIET on two intervals with one bottom gap, as depicted on the right of Figure \ref{fig:twochambtransverse}, with the following dynamical behavior: this g-GIET has a (forward) wandering interval (depicted here in grey) all of whose images are disjoint, and whose complement forms an attracting quasiminimal $\Omega^+$. Furthermore, there exists no transient orbit (see definition in \S~\ref{sec:typeorbits}), as each orbit which \textit{enters} through the bottom gap is \textit{trapped} in the grey wandering interval and will never leave the domain $I^t$ again.

\begin{figure}[h]
\centering
\includegraphics[width=0.8\textwidth]{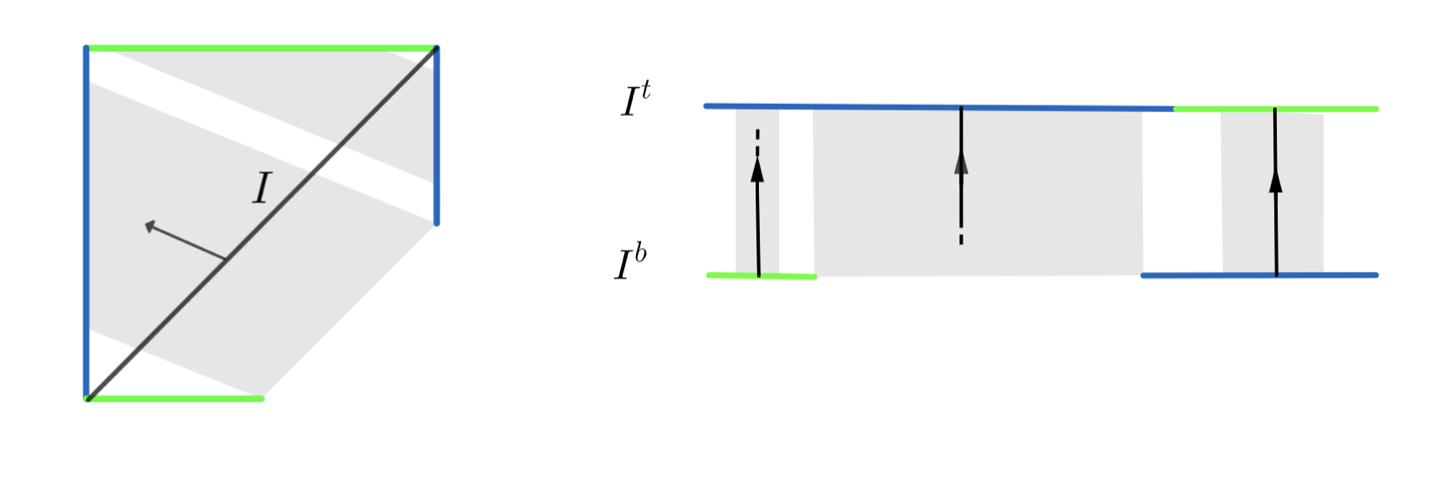}
\caption{\label{fig:twochambtransverse} The g-GIET obtained as the first return map of the directional flow $f^t_{\theta}$ to $I$.}
\end{figure}

From the example in Figure \ref{fig:twochambtransverse} one may build the AIET depicted in Figure \ref{fig:discoev}, which we also refer to as the \textit{Disco map}, as it corresponds to the Disco surface introduced in \cite{cascades}. As shown in \cite{cascades}, for a measure zero set of parameters of the pink interval, this AIET contains two Cantor-like quasiminimals, one of which is attracting and the other one repelling.

\begin{figure}
    \centering
    \includegraphics[width=0.6\linewidth]{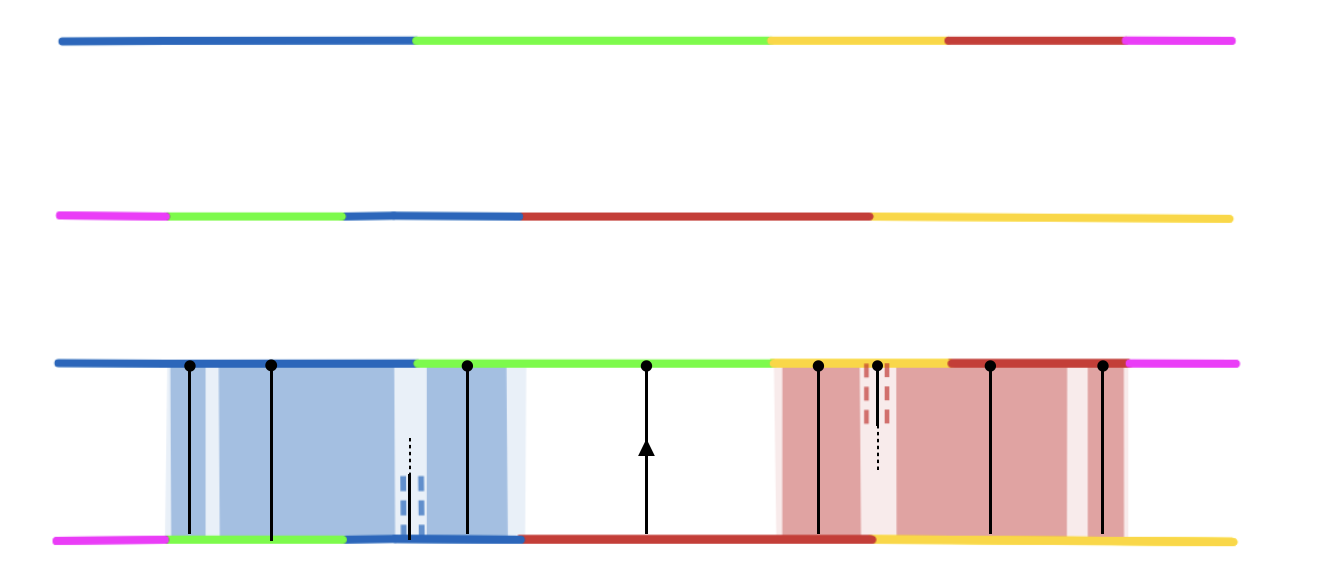}
    \caption{\label{fig:discoev} An AIET $T$ on five intervals, where the dilation factors are all either $2$, $\frac{1}{2}$ or $1$ and which is symmetric under rotation by $\pi$. For a measure zero set of parameters of the pink interval the dynamical behavior is the following: there exist two subsurfaces, one of which is invariant under $T$ (in light blue), the other which is invariant under $T^{-1}$ (in light red). The complement of the wandering intervals for $T$ (in dark blue) and the wandering intervals for $T^{-1}$ (in dark red) form two invariant Cantor-like quasiminimals $\Omega^+$ and $\Omega^-$. Any trajectory which is not contained in one of the quasiminimals is in the future attracted to $\Omega^+$ and in the past to $\Omega^-$.
    \label{fig:enter-label}}
\end{figure}

The examples in Figure \ref{fig:trivrec}, \ref{fig:twochambtransverse} and \ref{fig:discoev} share a special property: they have no transient orbits, i.e no orbits which both \textit{enter} through a gap and \textit{exit} through a gap. We call the domains of g-GIETs with this type of behaviour \textit{domain of recurrence}.

\subsubsection{Restriction of a g-GIET and domains of recurrence}\label{sec:domains}
Before we may give the formal definition of a \textit{domain of recurrence}, we must define the notion of a \textit{restriction of a g-GIET}. Let $T:I^t \to I^b$ be a g-GIET and let $J \subset I^t$ be a finite union of right-open intervals. 
Then the \textit{restriction} $T|_{J}$ of $T$ to $L$ is defined as the map:
\begin{align*}
    T|_{J}: J &\to T(J)\\
    x &\to T(x).
\end{align*}
Note that $T|_{J}: J  \to T(J)$ is again a g-GIET whose top and bottom intervals are the connected components of $J$ and $T(J)$.

\begin{figure}[h]
\includegraphics[width=0.6\textwidth]{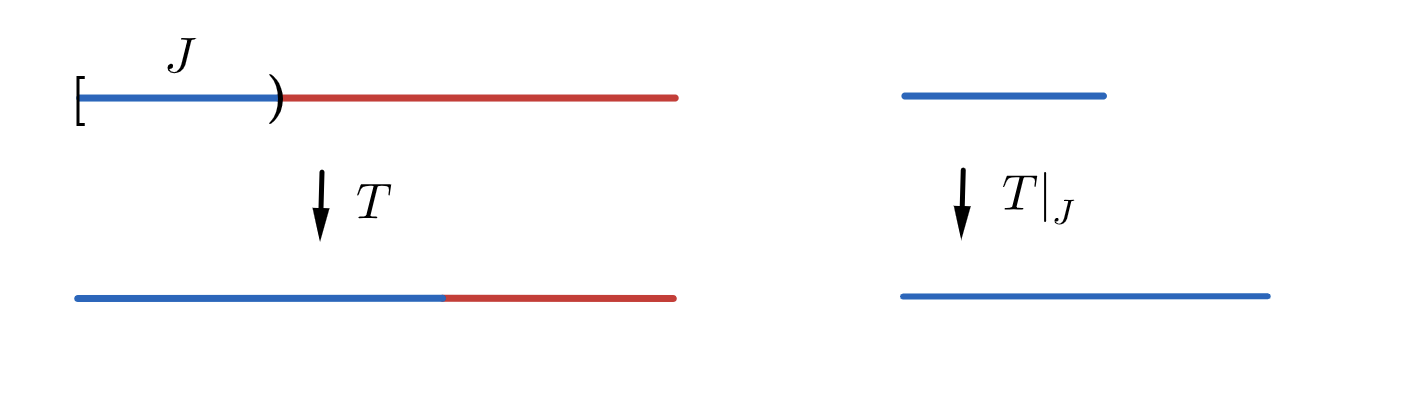}
\caption{\label{fig:restriction}
An example of the restriction of a GIET $T$ to $J$, where $J$ is taken to be the blue subinterval.}
\end{figure}

We are now able to give the formal definition of a domain of recurrence:

\begin{defs} We say that $J \subset I^t$ is a \textit{domain of transition} if for all $x \in J$, the orbit $\mathcal{O}_{T_J}(x)$ of $x$ under the map $T_J$ is transient. If no such $x \in J$ exists, then we call $J$ a domain of recurrence.
\end{defs}

\subsubsection{Example decompositions} \label{sec:ex_decomposition}
We give two examples of decompositions of an interval exchange transformation into domains of transition and domains of recurrence according to Theorem \ref{thm:decomposition}. \\

\noindent {\it Example 1:} Consider the IET in Figure \ref{fig:periodicdecomp} below, where the lengths of the green and blue interval are chosen to be irrational. If $R_1$ denotes the yellow interval and $R_2$ denotes the union of the red, blue, green and pink intervals, then every orbit in $R_1$ is periodic, whereas every orbit in $R_2$ is dense in $R_2$. Hence, the decomposition given on the right hand side of Figure \ref{fig:periodicdecomp} satisfies the assumptions of Theorem \ref{thm:decomposition}, where $R_1$ is a periodic domain and $R_2$ is a quasiminimal domain. We will return to this example later and show that region $R_1$ can be written as a \emph{tower representation} (see \S~\ref{sec:tower_example}).

\begin{figure}[h]
\centering
\includegraphics[width=0.9\textwidth]{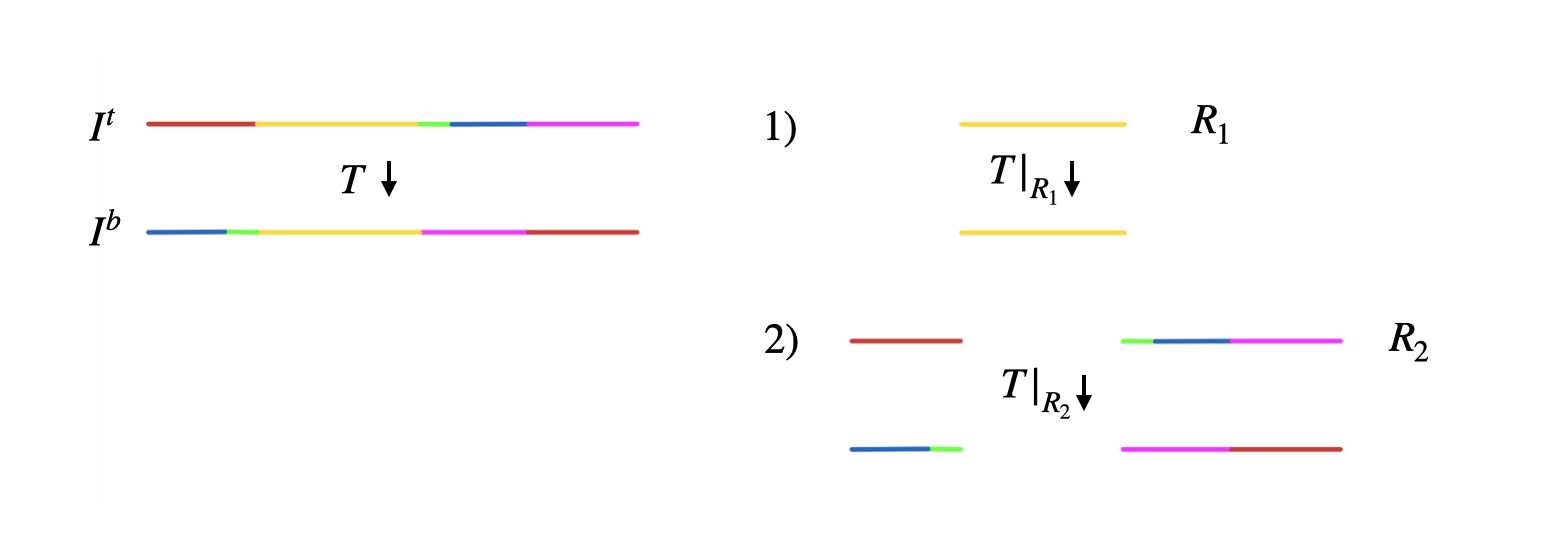}
\caption{\label{fig:periodicdecomp}Decomposition of the IET on the left according to Theorem \ref{thm:decomposition}. The lengths of the green and blue interval are chosen to be irrational.}
\end{figure}

\smallskip
\noindent {\it Example 2:} The dynamical decomposition of the AIET from Figure \ref{fig:discoev} is depicted in Figure \ref{fig:Cantordecomp} below. In this case $R_1$ and $R_2$ are both quasiminimal domains, since, as explained in Figure \ref{fig:discoev}, they contain a unique Cantor-like quasiminimal and they are also a domain of recurrence. The region $R_3$ is a domain of transition, since every orbit which \textit{enters} also \textit{leaves} again.

\begin{figure}[h]
\centering
\includegraphics[width=1.02\textwidth]{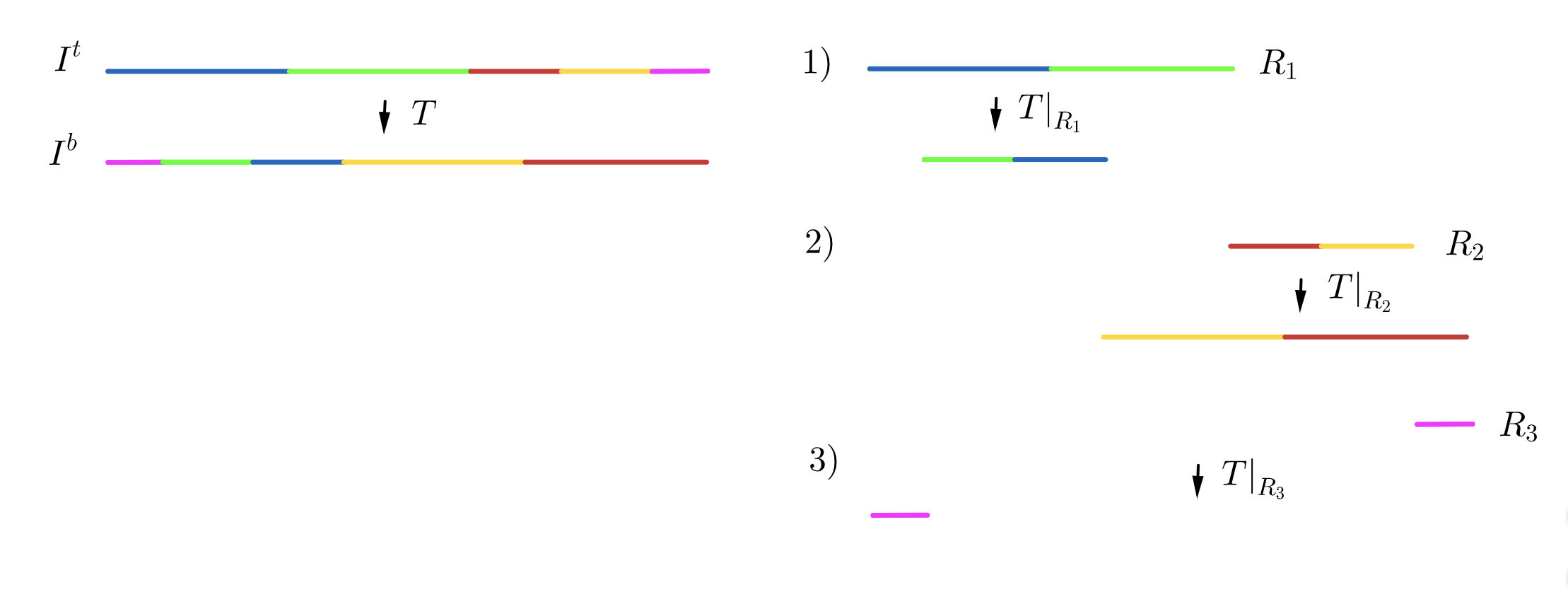}
\caption{\label{fig:Cantordecomp} Decomposition of the Disco map according to Theorem \ref{thm:decomposition}.}
\end{figure}


\smallskip

\subsection{Tower representation of a GIET with gaps}
\label{sec:towers} Now that we have introduced all the definitions necessary for the statement of Theorem \ref{thm:decomposition}, we turn towards the statement of Theorem \ref{thm:structure}. For this, we introduce the notion of a tower representation of a g-GIET. This tower representation will allow us to describe the dynamical behavior of the maps $T|_{R_i}$ in terms of a simpler map defined on the \textit{base} intervals of $R_i$ (see also the example in \S~\ref{sec:tower_example}).



\subsubsection{Definition of tower representation} The notions of \emph{tower} and \textit{tower representation} of a g-GIET (in \S~\ref{sec:towers})
generalize the definition of a  \emph{Rohlin tower} and of a \emph{skyscraper} in ergodic theory to the context of g-GIETs.


\smallskip
Let $T: I^t \to I^b$ be a g-GIET and let $\tilde{T}: \tilde{I^t} \to \tilde{I^b}$ be a g-GIET with
 \begin{align*} \tilde{I^t} &:= \bigcup_{\beta \in \mathcal{B}} \tilde{I^t_{\beta}}
  \subset I^t,
 \\
\tilde{I^b} &:= \bigcup_{\beta \in \mathcal{B}} \tilde{I^b_{\beta}} \subset I^b. \end{align*}

\begin{defs}[Tower representation]\label{def:towerrep} We say that  $T$ has a  \textit{tower representation} over $\tilde{T}$ if there exists positive integers $(n_\beta)_{\beta\in\mathcal{B}}$ such that, for any $\beta\in \mathcal{B}$, the intervals
$$
\tilde{I^t_{\beta}},  \  T (\tilde I^t_{\beta}),\  \dots \ , T^{n_\beta-1}(\tilde
 I^t_{\beta})$$
are pairwise disjoint,  all contained in $I^t$ and
\begin{equation}\label{eq:acceleration}
\tilde{T}(x) = T^{n_{\beta}}(x), \quad  \text{for\  all}\  x \in \tilde{I^t_{\beta}}, \qquad \forall \beta \in \mathcal{B}.
\end{equation}
\end{defs}
\noindent
In this case we define the  \textit{tower} for $T$ over 
the  base interval $\tilde{I^t_{\beta}}$  as the finite union of intervals
\begin{align}\label{eq:tower_def}
    \text{Tow}(\tilde{I^t_{\beta}}) := \bigcup_{i=1}^{n_\beta-1} T^{i}(\tilde{I^t_{\beta}}).
\end{align}
 The intervals in the union \eqref{eq:tower_def} are called \emph{floors} of the tower.
Note that by definition $ \text{Tow}(\tilde{I^t_{\beta}}) \subset I^t$. The \emph{tower representation} can then be represented as a  disjoint union of towers:
\begin{align*}
    \text{Rep}(\tilde{I}^t) := \bigcup_{\beta \in \mathcal{B}} \text{Tow}(\tilde{I^t_{\beta}}),
\end{align*}
(see Figure~\ref{fig:tower_rep}). We say that the intervals $\{\tilde{I^t_{\beta}}\}_{\beta \in \mathcal{B}}$ are  the \textit{base intervals} of the representation and the $(n_\beta)_{\beta\in \mathcal{B}}$ are the \emph{heights} of the towers. Inside the tower $\text{Tow}(\tilde{I^t_{\beta}}) $, $T$ maps each floor $T^{i}(\tilde{I^t_{\beta}})$ with $0\leq i<n_\beta$ to the floor $T^{i+1}(\tilde{I^t_{\beta}})$ represented just \emph{above}, while the  \emph{top floor} $T^{n_{\beta}-1} (\tilde{I}^t_\beta)$ is mapped to $\tilde{I}^b_\beta$.


\begin{figure}[h]
\centering
\includegraphics[width=0.8\textwidth]{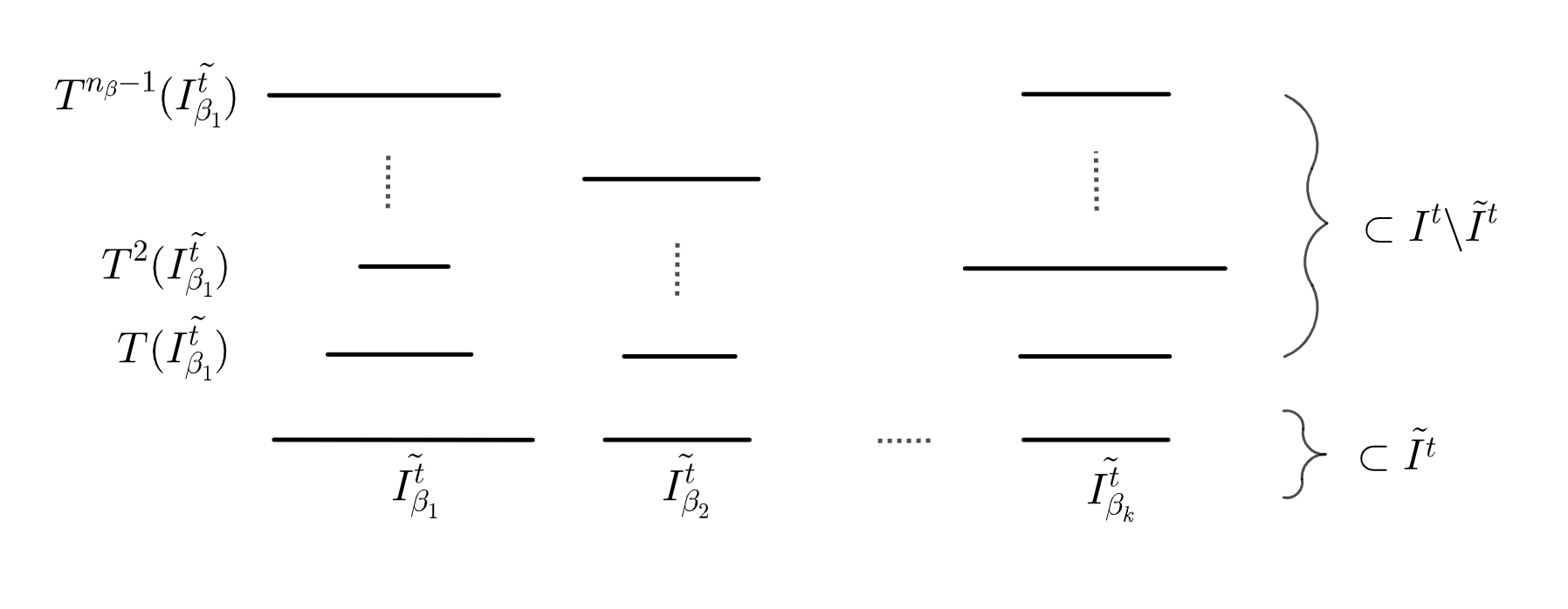}
\caption{\label{fig:tower_rep} Schematic picture of a tower representation.}
\end{figure}

\begin{rem}
The classical example of tower representations arise when $\tilde{T}$ is the first return of $T$ on a subinterval $J$, in which case the base intervals $\tilde{I}^t_\beta$ of $\text{Tow}(\tilde{I^t_{\beta}})$ are contained in $J$ (in this case $\text{Tow}(\tilde{I^t_{\beta}})$ is also called a \emph{skyscraper}). In general, though, note that $\tilde{T}$ \emph{is not} the first return map of $T$ on the base, since in general the intervals of $\tilde{I}_{\beta}^b$ do not have to be contained in $\tilde{I}_{\beta}^t$.

\end{rem}



\begin{defs}[Acceleration]
If $T$ has a tower representation over $\tilde{T}$, we say that the g-GIET  $\tilde{T}$ is an \emph{acceleration} map of $T$.
\end{defs}
\noindent Conversely, if $\tilde{T}$ is an acceleration of $T$, then by \eqref{eq:acceleration}   each branch of $\tilde{T}$  is a given by a \emph{iterate} of $T$.  We  conclude this section by recording a simple observation on the dynamical structue of towers which will be useful in some proofs.
\begin{rem}\label{rk:tower}
If $R=$Tow$(\tilde{I^t_{\beta}})$ is a tower representation of $T$, any $x\in R$ which belongs to a floor of the tower has a backward iterate in the base, i.e.~there exists $m\in \mathbb{N}$ and $\tilde{x} \in \tilde{I^t_{\beta}}$ such that  $T^m (\tilde{x}) = x$. This follows simply since backward iterates of $x$ will move down in the tower until they reach the base floor.
\end{rem}



\subsubsection{Example of a tower representation}\label{sec:tower_example}
In the example decomposition shown in Figure \ref{fig:periodicdecomp} (see Section \S~\ref{sec:ex_decomposition}), the  g-IET $T|_{R_1}$ obtained restricting $T$ to ${R_1}$ has a tower representation over the base g-IET $T_1:E_1^t \to E_1^b$ as depicted in Figure \ref{fig:towerper}. Here, $E_1^t$ is the top red interval of $T$ from Figure \ref{fig:periodicdecomp}. Note that the base g-GIET $T_1$ is now simply an irrational rotation, and that the dynamics of the base g-GIET $T_1$ determines the dynamics of $T$ restricted ro $R_1$. We will show that this IET satsifies assumption (i) of Theorem \ref{thm:decomposition} (i.e that its \textit{combinatorial rotation number} is \textit{infinite complete}).

\begin{figure}[h]
\includegraphics[width=0.5\textwidth]{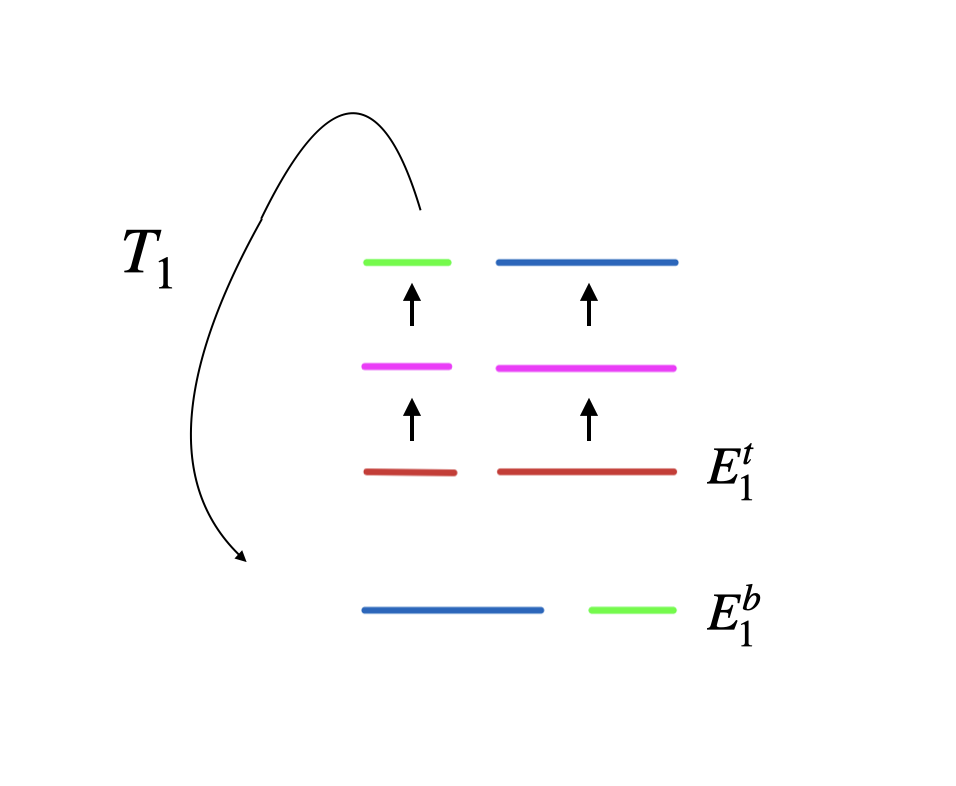}
\caption{\label{fig:towerper}
A pictorial representation of the tower representation of $T|_{R_1}$ over $T_1$.}
\end{figure}

\subsection{Maier's Theorem for g-GIETs}\label{sec:maierthm} We conclude this section by proving a result on the structure of quasiminimal sets, for which we will use the first return map and the tower representation of a g-GIET. The main statement of Proposition \ref{prop:maierthm} and the related Corollary \ref{cor:maierthm} below are that any non-trivially recurrent orbit contained in a quasiminimal is dense therein.
This result
will be of importance in Section \S~\ref{sec:renormandrecurrence}, when \textit{separate} quasiminimals using Rauzy-Veech induction.

The analogous result for flows on surfaces was proved by Maier (see \cite{Maier}) in the 1930s for flows on surfaces. His proof uses a fact from Poincaré-Bendixson theory, namely that there exist no non-trivially recurrent trajectories on a simply connected domain with possibly a finite number of disks removed\footnote{Maier exploits the fact that a finite set of curves on a surface of genus $g$ bounds such a domain in order to \textit{trap} a non-trivially recurrent trajectory in such a region and lead to a contradiction. For more details, see \cite{zhuzoma}.}.
We give an independent proof in the setting of g-GIETs,  which does not use Poincaré-Bendixson theory. Instead, it crucially relies on the fact that the first return map of a g-GIET to an interval is again a g-GIET on \textit{finitely} many intervals.

\begin{prop}\label{prop:maierthm}
Let $T:I^t \to I^b$ be a g-GIET with two non-trivially recurrent orbits $l_1, l_2 \subset I^t$ and let $\Omega_1$, $\Omega_2$ be the quasiminimals obtained by taking the closure of $l_1, l_2$ in $I^t$. Then if $l_2 \subset \Omega_1$, also $l_1 \subset \Omega_2$.
\end{prop}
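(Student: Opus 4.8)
The plan is to argue by contradiction and to exploit the finiteness of the number of branches of the first return map (Remark~\ref{rk:number_intervals_Poincare}), which forces the orbit $l_1$ into a finite union of intervals once we pass to a return map. First I would reformulate the statement: since $\Omega_1=\overline{l_1}$ is closed and $l_2\subset\Omega_1$, we automatically get $\Omega_2=\overline{l_2}\subset\Omega_1$, so the conclusion $l_1\subset\Omega_2$ is equivalent to $\Omega_1=\Omega_2$, i.e.\ to the fact that nested quasiminimals must coincide. Assume this fails, so there is a point $z\in l_1\setminus\Omega_2$. As $\Omega_2$ is closed, I can choose a small right-open interval $J\ni z$ with $\overline{J}\cap\Omega_2=\emptyset$; in particular $l_2\cap\overline{J}=\emptyset$, and this is the relation I will contradict by exhibiting a point of $l_2$ inside $J$.

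Next I would confine $l_1$. Since $z$ is recurrent, its full orbit $l_1=\mathcal{O}(z)$ is regular and $z\in\omega(z)\cap\alpha(z)$, so $z$ returns to $J$ infinitely often in both forward and backward time. I consider the first return map $T_J\colon J^t\to J^b$, which by Remark~\ref{rk:number_intervals_Poincare} is a g-GIET with only finitely many branches $J_1,\dots,J_m$, each of finite constant return time $n_i$, giving towers with floors $T^k(J_i)$ for $0\le k<n_i$. Every excursion of the orbit of $z$ from $J$ back to $J$ runs through the floors of one such tower, so the whole orbit satisfies $l_1\subset\bigcup_{i,k}T^k(J_i)$, a finite union of right-open floor-intervals. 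Taking closures, $\Omega_1=\overline{l_1}$ is contained in the finite union of the closed floors.

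Finally I would run the descent argument. Because $l_2\subset\Omega_1$ lies in this union of floor closures but avoids $\overline{J}$ (hence avoids the base floors $J_i\subset J$ and their closures), every point of $l_2$ lies in the closure of some \emph{non-base} floor $T^k(J_i)$ with $k\ge 1$. Since $l_2$ is infinite while the floor endpoints form a finite set, I can pick $y\in l_2$ lying in the interior of such a floor. Descending the tower exactly as in Remark~\ref{rk:tower}, the backward orbit point $T^{-k}(y)$ lies in the base interval $J_i\subset J$; this is a genuine point of $l_2$ (the orbit is regular, so its backward iterates are defined and again in $l_2$). Thus $T^{-k}(y)\in l_2\cap J\subset\Omega_2\cap\overline{J}$, contradicting $\overline{J}\cap\Omega_2=\emptyset$ and proving $l_1\subset\Omega_2$.

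The crux of the argument — and the only genuinely nontrivial step — is the confinement: combining the recurrence of $z$ with the finiteness of the number of return branches to trap $l_1$, and hence $\Omega_1$, inside a finite union of tower floors. This is precisely where the special structure of g-GIETs (finitely many continuity intervals for the return map) replaces the use of Poincar\'e--Bendixson theory in Maier's original argument. The remaining steps are essentially bookkeeping; the only minor care needed is to select $y$ in the interior of a non-base floor so that the tower descent stays well defined and terminates in the base.
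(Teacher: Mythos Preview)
Your proof is correct and follows essentially the same strategy as the paper's: both argue by contradiction, choose a right-open interval $J$ (the paper's $U_1$) around a point of $l_1$ that avoids $l_2$, build the first return map and its finite tower representation, confine $l_1$ inside this finite union of floors, and then force a point of $l_2$ down to the base $J$ to get the contradiction. The only minor difference is in the closure step: the paper uses that the tower representation is a finite union of \emph{left-closed} intervals and that some $p_2\in l_2$ is a right-sided limit of points of $l_1$ (so $p_2$ lands directly in the representation), whereas you take full closures of the floors and invoke pigeonhole ($l_2$ infinite, finitely many floor endpoints) to find $y\in l_2$ in the interior of a non-base floor---arguably a cleaner way to handle the same issue.
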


\begin{proof} Since $\Omega_1 = \omega(l_1) \cup \alpha(l_1)$ (see Section \S~\ref{sec:limitset}) it suffices to show
\begin{align*}
    l_2 \subset \omega(l_1) \implies l_1 \subset \omega(l_2),
\end{align*}
which together with the corresponding statement for the $\alpha$-limit set, whose proof is analogous to the one we give below, implies the result.

Assume by contradiction that $l_2 \subset \omega(l_1)$ but $l_1 \not \subset \omega (l_2)$. It follows that there exists $p_1 \in l_1$ and a right-open neighbourhood $U_1 \subset I^t$ which contains $p_1$ such that
    \begin{align}\label{eq:1}
    l_2 \cap U_1 = \emptyset.
    \end{align}
    Since by assumption $l_2 \subset \omega(l_1)$, there exists also a point $p_2 \in l_2$ and a right-open neighbourhood $U_2 \subset I^t$ which contains $p_2$ as well as a sequence
    \begin{align}\label{eq:2}
        (x_n)_{n \in \mathbb{N}} \in U_2 \cap l_1 \hspace{2mm} \text{s.t.} \lim_{n \to \infty}x_n = p_2,
    \end{align}
 where w.l.o.g we may assume that $x_n$ accumulates to $p_2$ from the right.

\begin{figure}[h]
\centering
\includegraphics[width=0.8\textwidth]{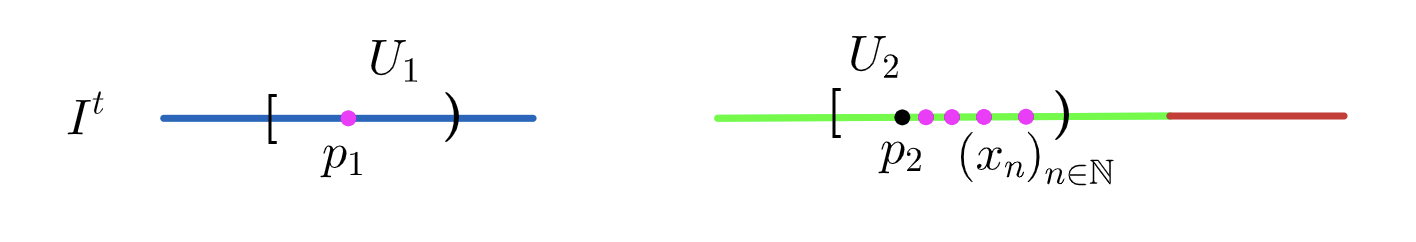}
\caption{\label{fig:accumulate} The pink points belong to $l_1$, the black point to $l_2$.}
\end{figure}

Let $\tilde{T}: \tilde{I}^t \to \tilde{I}^b$ be the first return map of $T$ to $U_1$, which is again a g-GIET on a finite number of right-open intervals (see Remark~\ref{rk:number_intervals_Poincare}). Consider the tower representation Rep($\tilde{I}^t$) of $T$ over $\tilde{T}$. Then Rep($\tilde{I}^t$) consists of a finite union of right-open intervals. Furthermore, we claim that
    \begin{align}\label{eq:3}
    l_1 \subset \text{Rep}(\tilde{I}^t).
    \end{align}
    Indeed, consider any $x \in l_1 \subset I^t$. To show (\ref{eq:3}), it is enough to show that $x \in \text{Rep}(\tilde{I}^t)$, since $\text{Rep}(\tilde{I}^t)$ is invariant for $T$. Since $l_1$ is non-trivially recurrent, there must exist minimal $m, n \in \mathbb{N}$ such that $T^{-m}(x) \in U_1$ and $T^{n}(x) \in U_1$. Thus, by construction $n+m$ is the first return of $T^{-m}(x)\in U_1\subset I^t$ to $U_1$, so
    the tower of $\text{Rep}(\tilde{I}^t)$ over the interval in $\tilde{I}^t$ which contains $T^{-m}(x)$ also contains
    $x$  and (\ref{eq:3}) follows.

    Note that since $\text{Rep}(\tilde{I}^t)$ is a \emph{finite} union of left-closed intervals, it is left-closed, i.e.~
     any 
    right-sided limit points of points in $\text{Rep}(\tilde{I}^t)$
    must also be contained in $\text{Rep}(\tilde{I}^t)$. Hence, by (\ref{eq:2}), it follows that $p_2 \in \text{Rep}(\tilde{I}^t)$, so in particular it belongs to a tower. Since every point in a tower has an iterate which belongs to the base, this implies that there exists $l \in \mathbb{N}$ such that $T^l(p_2) \in U_1$. In particular, since $p_2 \in l_2$
    it follows $l_2 \cap U_1 \neq \emptyset$, which is a contradiction to (\ref{eq:1}). The proposition follows.
\end{proof}

\begin{cor}\label{cor:maierthm}
    Let $T:I^t \to I^b$ be a g-GIET which contains a quasiminimal $\Omega_1$. Then if $l_2 \subset \Omega_1$ is a non-trivially recurrent orbit, and $\Omega_2$ is the quasiminimal obtained from the closure of $l_2$ in $I^t$, then $\Omega_1 = \Omega_2$.
\end{cor}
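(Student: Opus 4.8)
The goal is to prove Corollary \ref{cor:maierthm}, which states that if $\Omega_1$ is a quasiminimal and $l_2 \subset \Omega_1$ is a non-trivially recurrent orbit whose closure (in $I^t$) is $\Omega_2$, then $\Omega_1 = \Omega_2$. The plan is to deduce this as a direct consequence of Proposition \ref{prop:maierthm} together with the definitions of quasiminimal and limit set. First I would unpack the hypothesis: since $\Omega_1$ is a quasiminimal, by definition it is the closure of some non-trivially recurrent orbit $l_1$, so we are exactly in the setting of the Proposition with two non-trivially recurrent orbits $l_1, l_2$ satisfying $l_2 \subset \Omega_1$.

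Applying Proposition \ref{prop:maierthm} immediately gives $l_1 \subset \Omega_2$. Taking closures in $I^t$, since $\Omega_2 = \overline{l_2}$ is closed and contains $l_1$, we get $\Omega_1 = \overline{l_1} \subset \Omega_2$. For the reverse inclusion, I would argue that $l_2 \subset \Omega_1$ and $\Omega_1$ is closed, so $\Omega_2 = \overline{l_2} \subset \Omega_1$. Combining the two inclusions yields $\Omega_1 = \Omega_2$, as desired.

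The only subtlety — and the step I would treat most carefully — is the reverse inclusion bookkeeping: one must make sure that closures are taken in the same ambient space $I^t$ (as specified in the definition of quasiminimal) and that $\Omega_1$ is genuinely closed in $I^t$ so that $l_2 \subset \Omega_1$ forces $\overline{l_2} \subset \Omega_1$. Since a quasiminimal is by definition a topological closure, this is immediate, so there is no real obstacle here; the corollary is essentially a symmetric-inclusion argument feeding off the Proposition. The main conceptual content has already been established in the Proposition (in particular the use of the finiteness of intervals of the first-return map and the left-closedness of the tower representation), and the Corollary is a clean formal deduction from it.
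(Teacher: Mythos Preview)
Your proposal is correct and follows essentially the same approach as the paper: introduce a non-trivially recurrent orbit $l_1$ with $\overline{l_1}=\Omega_1$, apply Proposition~\ref{prop:maierthm} to get $l_1\subset\Omega_2$, and combine the two closure inclusions $\Omega_1=\overline{l_1}\subset\Omega_2$ and $\Omega_2=\overline{l_2}\subset\Omega_1$ to conclude.
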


\begin{proof} Let $l_1$ be a non-trivially recurrent orbit such that its closure is equal to $\Omega_1$. By assumption, $l_2 \subset \Omega_1$ and in particular $\Omega_2 \subset \Omega_1$. But by Proposition \ref{prop:maierthm}, it follows that  $l_1 \subset \Omega_2$ and in particular $l_1 \subset \Omega_2 \subset \Omega_1$. In particular, since the closure of $l_1$ is equal to $\Omega_1$, and since $\Omega_2$ is closed, it follows that $\Omega_1 = \Omega_2$.
\end{proof}
\begin{rem}\label{rem:maierthm} In particular, Corollary \ref{cor:maierthm} implies that if two quasiminimals contain the same non-trivially recurrent orbit, they must be equal.
\end{rem}

\section{Rauzy-Veech induction for GIETs with gaps} \label{sec:RVsection}




We introduce in this section the Rauzy-Veech induction for g-GIETs, which will be our key tool for determining a decomposition of a g-GIET into the dynamically independent components described in Theorem 1.1. The Rauzy-Veech induction for g-GIETs is a generalization of the standard Rauzy-Veech induction for (G)IETs. The main difference is that while in the standard case only intervals \textit{play} against each other, in our case intervals may also \textit{play} against gaps. This may result in the number of intervals decreasing by one. Another difference is that for the purpose of using the induction later on to find a dynamical decomposition, we allow the induction to be well-defined even if the two rightmost top and bottom intervals have the same left endpoints.




\subsection{Elementary step of Rauzy-Veech induction}\label{sec:RV g-GIETs} Let $T:I^t \to I^b$ be a g-GIET on $d$ intervals. We assume throughout that $d>0$.  
For the remainder of this chapter, we use the following notation:
\begin{defs} Let $\alpha_t$ and $\alpha_b$ be the letters in  ${ \mathcal{A}}$ defined by
$$\pi^{-1}_t(d)=\alpha_t, \qquad \pi^{-1}_b(d)=\alpha_b,$$
so that $I^t_{\alpha_t}$ is  the rightmost top interval of $I^t$, and $I^t_{\alpha_b}$ is the rightmost bottom interval of $I^b$. Name $l^t$ and  $r^t$ the end points of $I^t_{\alpha_t}$ and by  $l^b$ and $r^b$ the endpoins of $I^t_{\alpha_b}$, so that
\begin{align}
I_{\alpha_t}= I_{\pi^{-1}_t(d)} = [l^t, r^t), \qquad I_{\alpha_b}= I_{\pi^{-1}_b(d)} = [l^b, r^b).
\end{align}
\end{defs}
We distinguish between four cases;  in each of them, we will define a suitable interval $[0,\lambda^{(1)})$ on which to induce $T$.
\begin{defs}
The g-GIET  $T^{(1)}$ obtained by one step of Rauzy-Veech induction from $T$ is the g-GIET obtained as first return map of $T$ to the interval $[0,\lambda^{(1)})$ defined according to the cases below. Explicitely, $T^{(1)}:I^{t,1} \to I^{b,1}$ is a g-GIET with alphabet $\mathcal{A}^{(1)}$ and intervals $I^{t,1} = \bigcup_{\alpha \in \mathcal{A}^{(1)}} I^{t,1}_{\alpha}$ and $I^{b,1} = \bigcup_{\alpha \in \mathcal{A}^{(1)}} I^{b,1}_{\alpha}$ as described below.

\begin{itemize}
\item \textbf{Case 1} \textit{(interval vs interval)}: assume that $r^t = r^b$, i.e the two rightmost intervals have the same right endpoint. We then define $\lambda^{(1)} := \textit{max\hspace{1mm}}(l^t, l^b)$.  

\smallskip
There are two subcases:
\begin{itemize}

\vspace{1mm}
\item \textbf{Case 1a} \textit{(an interval wins)} if $l^t \neq l^b$, i.e the two rightmost intervals do not have the same length. If $l^t < l^b$ we say that the \textit{top interval wins} and \textit{bottom interval looses}, if $l^t > l^b$ we say that the \textit{bottom interval wins} and \textit{top interval looses}. Note that in this case, $T^{(1)}$ is again a g-GIET on $d$ intervals.

Explicitely, $\mathcal{A}^{(1)} = \mathcal{A}$ and if $l^t < l^b$ (top interval wins), then
\begin{align*}
    I_{\alpha}^{t,1} = I_{\alpha}^{t} \cap [0,l^b), \hspace{2mm} \alpha \in \mathcal{A},\\
   T^{(1)}_{\alpha} = T|_{I_{\alpha}^{t,1}}, \hspace{2mm} \alpha \neq \alpha_b,\\
   T^{(1)}_{\alpha} = T^2|_{I_{\alpha}^{t,1}}, \hspace{2mm} \alpha = \alpha_b.
\end{align*}
If $l^t > l^b$ (bottom interval wins), then
\begin{align*}
    I_{\alpha}^{t,1} = T^{-1}(I_{\alpha}^{b} \cap [0,l^t)), \hspace{2mm} \alpha \neq \alpha_t,\\
    I_{\alpha}^{t,1} = T^{-1}(I_{\alpha}^{b} \cap [l^t,1)), \hspace{2mm} \alpha = \alpha_t,\\
    T^{(1)}_{\alpha} = T|_{I_{\alpha}^{t,1}}, \hspace{2mm} \alpha \neq \alpha_t,\\
   T^{(1)}_{\alpha} = T^2|_{I_{\alpha}^{t,1}}, \hspace{2mm} \alpha = \alpha_t.
\end{align*}


\begin{figure}[h]
\centering
\includegraphics[width=0.9\textwidth]{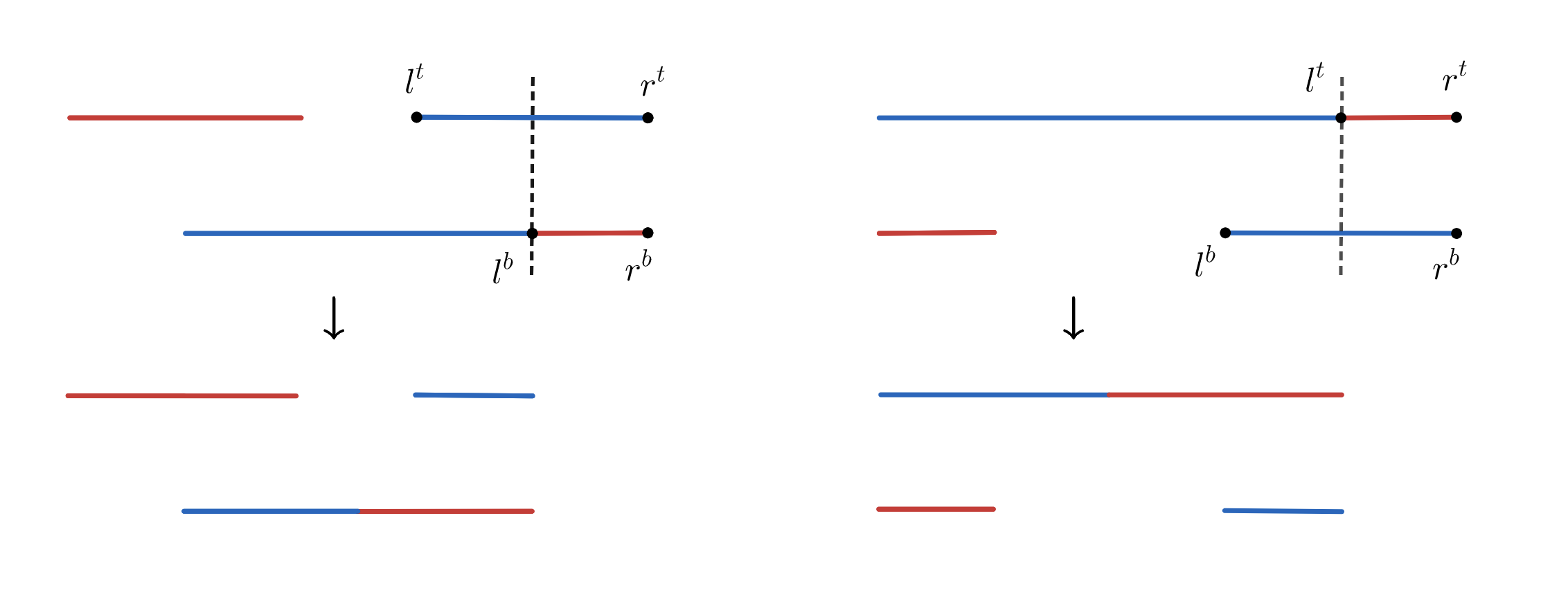}
\caption{Case 1a: on the left the top interval wins, on the right, the bottom interval wins.}
\end{figure}

\item \textbf{Case 1b} \textit{(neither wins)}: assume that $l^t = l^b$, i.e the two rightmost intervals have the same length. In this case, we say that \textit{neither wins}. Note that $T^{(1)}$ is a g-GIET on $d-1$ intervals.

Explicitely, if $l^t = l^b$ and $\mathcal{A}^{(1)} := \mathcal{A} \backslash \{\alpha_t\}$ then
\begin{align*}
    I_{\alpha}^{t,1} = I_{\alpha}^{t}, \hspace{2mm} \alpha \in \mathcal{A}^{(1)},\\
   T^{(1)}_{\alpha} = T|_{I_{\alpha}^{t,1}}, \hspace{2mm} \alpha \in \mathcal{A}^{(1)} \backslash \{\alpha_b \}.\\
   T^{(1)}_{\alpha} = T^2|_{I_{\alpha}^{t,1}}, \hspace{2mm} \alpha = \alpha_b.\\
\end{align*}
\end{itemize}

\begin{figure}[h]
\centering
\includegraphics[width=0.45\textwidth]{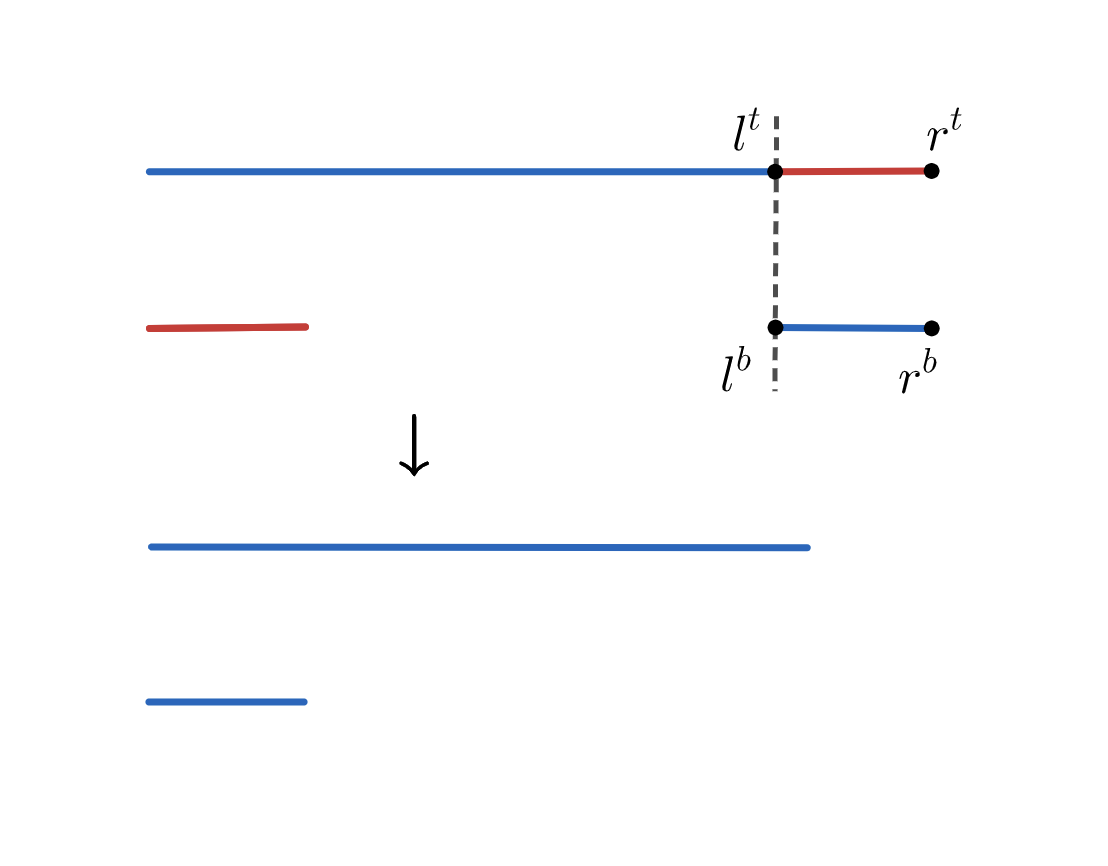}
\caption{Case 1b: neither wins.}
\end{figure}

\item \textbf{Case 2} \textit{(gap vs interval)}: assume that $r^t > r^b$ or $r^t < r^b$, i.e there is a gap to the right of the rightmost top, respectively bottom, interval. We distinguish further between the two cases where the gap looses, and where the interval looses: \vspace{2mm}

\begin{itemize}
   \item \textbf{Case 2a} \textit{(interval wins)}: if max$(l^t, l^b) < $ min$(r^t, r^b)$, we define $\lambda^{(1)} :=$ min$(r^b, r^t)$.
   If $r^t > r^b$ we say that the \textit{top interval wins} and \textit{bottom gap looses}. Similarly, if $r^b > r^t$ we say that the \textit{bottom interval wins} and \textit{top gap looses}. Note that in this case, $T^{(1)}$ is again a g-GIET with gaps on $d$ intervals and $\mathcal{A}^{(1)} = \mathcal{A}$.

   Explicitely, if $r^t > r^b$ (top interval wins), then
\begin{align*}
    I_{\alpha}^{t,1} = I_{\alpha}^{t} \cap [0,r^b), \hspace{2mm} \alpha \in \mathcal{A},\\
   T^{(1)}_{\alpha} = T|_{I_{\alpha}^{t,1}}, \hspace{2mm} \alpha \in \mathcal{A}.
\end{align*}
If $r^b > r^t$ (bottom interval wins), then
\begin{align*}
    I_{\alpha}^{t,1} = T^{-1}(I_{\alpha}^{b} \cap [0,r^t)), \hspace{2mm} \alpha \in \mathcal{A},\\
    T^{(1)}_{\alpha} = T|_{I_{\alpha}^{t,1}}, \hspace{2mm} \alpha \in \mathcal{A}.
\end{align*}

\begin{figure}[h]
\centering
\includegraphics[width=0.9\textwidth]{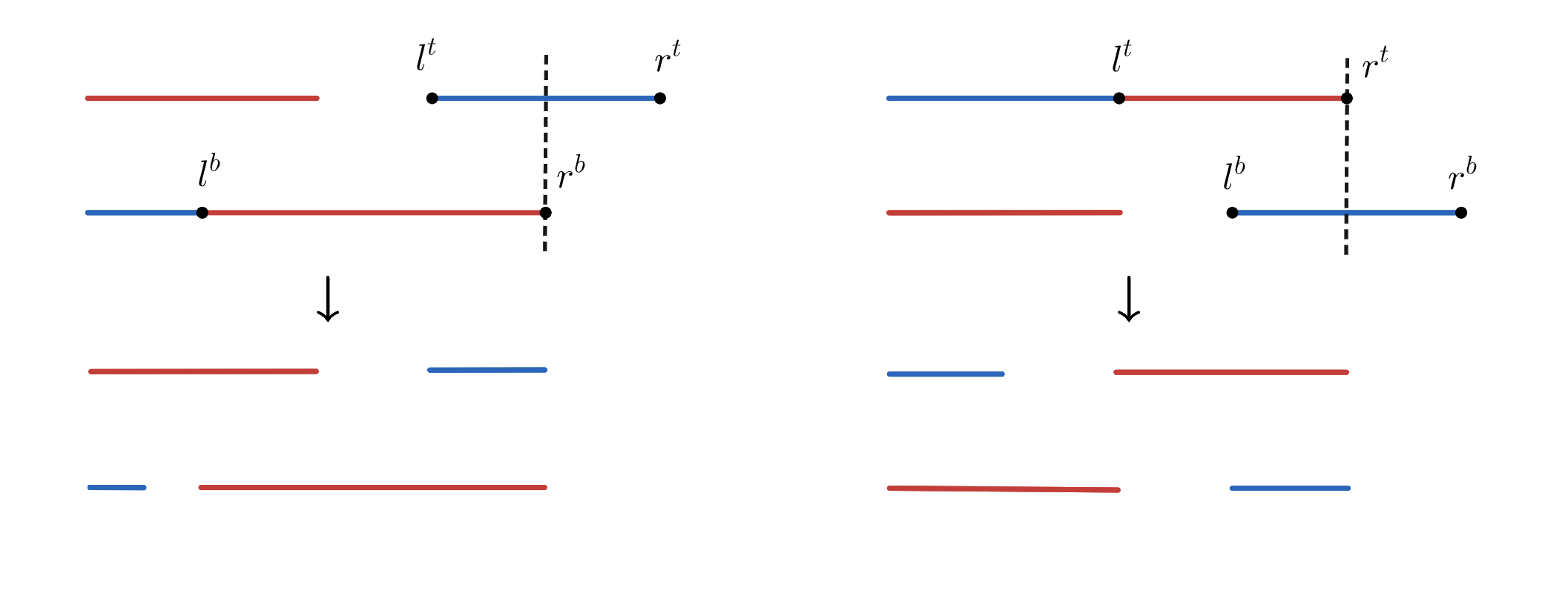}
\caption{Case 2a: on the right, the top interval wins and on the left, the bottom interval wins.}
\end{figure}

    \item \textbf{Case 2b} \textit{(gap wins)} if max$(l^t, l^b) > $min$(r^t, r^b)$ we define $\lambda^{(1)} :=$ max$(l^b, l^t)$.
    If $r^t < r^b$ we say that the \textit{top gap wins} and the \textit{bottom interval looses}. Similarly, if $r^t > r^b$ we say that the \textit{bottom gap wins} and the \textit{top interval looses}.

    Note that in this case, $T^{(1)}$ is a g-GIET on $d-1$ intervals. \smallskip

    Explicitely, if $r^t < r^b$  (top gap wins), then $\mathcal{A}^{(1)} := \mathcal{A} \backslash \{\alpha_b\}$ and if $r^t >r^b$ (bottom gap wins), then  $\mathcal{A}^{(1)} := \mathcal{A} \backslash \{\alpha_t\}$. In both cases
    \begin{align*}
    I_{\alpha}^{t,1} = I_{\alpha}^{t}, \hspace{2mm} \alpha \in \mathcal{A}^{(1)},\\
   T^{(1)}_{\alpha} = T|_{I_{\alpha}^{t,1}}, \hspace{2mm} \alpha \in \mathcal{A}^{(1)}.
\end{align*}


\begin{figure}[H]
\centering
\includegraphics[width=0.9\textwidth]{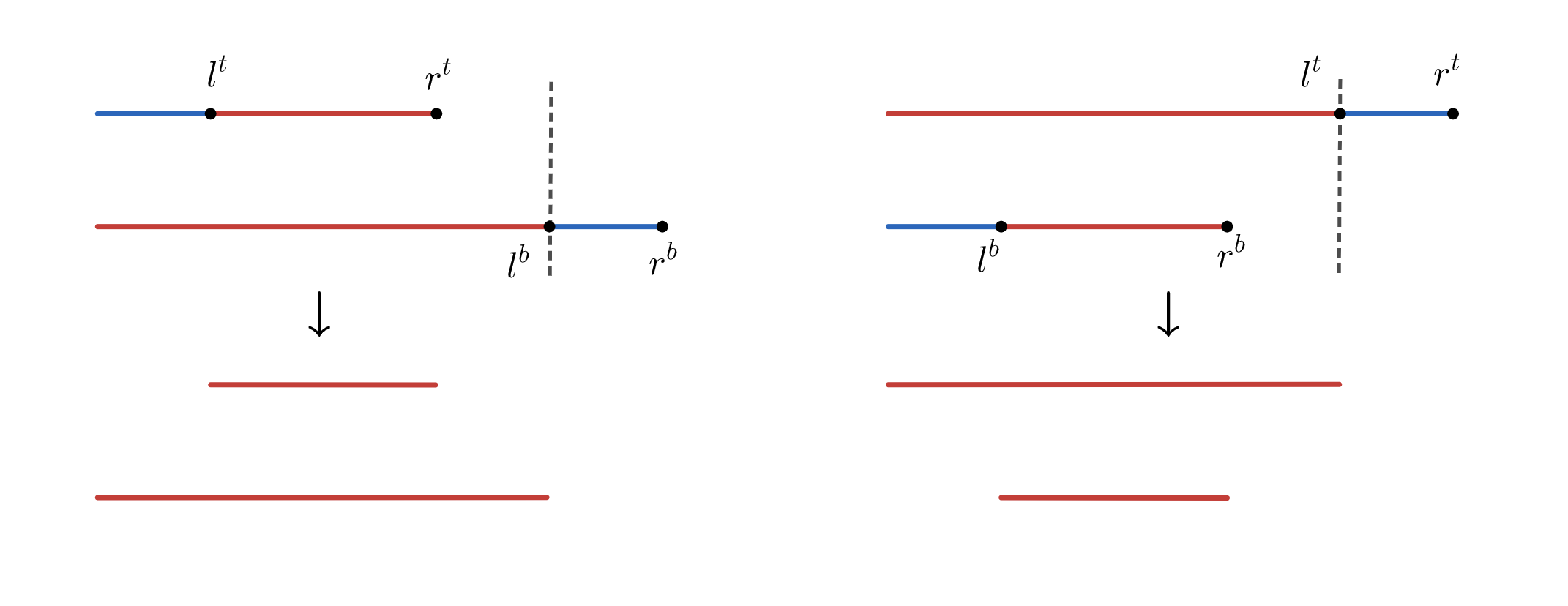}
\caption{Case 2b: On the right, the top gap wins, on the left, the bottom gap wins.}
\end{figure}

\end{itemize}
\end{itemize}

Note that in case 1a) and 2a), the number of intervals stays the same, whereas in case 1b) and 2b), the number of intervals decreases by one.
   \end{defs}
\subsubsection{Winning letters} If the winners and loosers are intervals and not gaps, then we can record their labels as follows. Recall that $\alpha_t$ and $\alpha_b$ are the letters in  ${ \mathcal{A}}$ defined by   $\pi^{-1}_t(d)=\alpha_t$ and $\pi^{-1}_b(d)=\alpha_b$. If the top (bottom) interval wins, we say $\alpha_t$ wins and $\alpha_b$ looses. Conversely, if the bottom interval wins, we say $\alpha_b$ wins and $\alpha_t$ looses.
We further say the letter $\alpha$ \textit{plays} if $\alpha$ either wins or looses, and that $\alpha$ \textit{does not play} if $\alpha$ neither wins nor looses.

\subsection{Rotation numbers and Poincar{\'e}-Yoccoz Theorem}
Let us now show that when the induction can be iterated forever, we can exploit it to associate to an \emph{infinitely renormalizable} g-GIET a \emph{combinatorial rotation number} (see  \S~\ref{sec:rotnumbers}). We then show that (infinite complete) rotation numbers characterize semi-conjugacy classes of g-GIETs, generalizing a result proved by Yoccoz for GIETs (see Theorem~\ref{thm:PoincareYoccoz} in \S~\ref{sec:PoincareYoccoz}).
\subsubsection{Iterates of the induction.} We say that \textit{the induction stops} if $d=1$ and $T$ is either in case 1b) or in case 2b), i.e $T^{(1)}$ is a g-GIET on $d-1 = 0$ intervals. After $n$ elementary steps of the induction, provided that at no point the induction stops, we obtain a sequence of g-GIETs $$T^{(i)} : I^{t,i} \to I^{b,i}, \; i \in \{1, \dots, n\},$$ on $d_i > 0$ intervals, for $i \in \{1, \dots, n\}$. We call a g-GIET for which the induction never stops \textit{infinitely renormalizable}.

\subsubsection{RV-stable g-GIETs}\label{sec:RV-stable} Iterating RV-induction, as we remarked, the number of intervals can decrease;
furthermore,  some letters which had played before may
stop playing. Both of these phenomena however can happen only finitely many times.
 In order to identify the g-GIETs in an orbit for which both these phenomena have ceased to happen, we give the following definition of an \textit{RV-stable} g-GIET:

\begin{defs}\label{def:RVstable} Let $T:I^t \to I^b$ be a g-GIET on $d \geq 1$ intervals and alphabet $\mathcal{A}$. We call an infinitely renormalizable g-GIET $T$ \textit{RV-stable} if for any  $\alpha \in \mathcal{A}$, either $\alpha$ never plays or plays infinitely often, where in the latter case it must either only win, or only loose, or both win and loose infinitely often.
\end{defs}

\begin{rem}\label{rk:RVstable} Every g-GIET $T$  which is infinitely renormalizable eventually becomes RV-stable after a finite steps of RV-induction, i.e.~there exists $N \in \mathbb{N}$ such that $T^{(n)}$ is RV-stable for all $n \geq N$. It  suffices to choose $N$ large enough so that for all steps $n\geq N$ no letter eventually stops playing, and all letters neither win nor loose only finitely often.
\end{rem}

We record in a lemma some properties of RV-stable g-GIETs which follow immediately from the definition.
\begin{lem}\label{lemma:RVstable}
Let   $T:I^t \to I^b$
be a g-GIET which is infinitely renormalizable and RV-stable.
Consider its iterates $(T^{(n)})_{n\in\mathbb{N}}$ under RV-induction.
Then:
\begin{enumerate}
\item  At every step of the induction, either two intervals play against each other and one wins, or 
an interval and a gap play against each other and the interval wins.
\item The number of intervals of the iterates of $T$ under RV induction is constant.
\end{enumerate}
\end{lem}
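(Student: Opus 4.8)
The plan is to prove both statements simultaneously by showing that, for an RV-stable $T$, only Case 1a and Case 2a can occur along the induction. Recall from the case analysis preceding the statement that Cases 1a and 2a leave the alphabet $\mathcal{A}$ (and hence the number of intervals) unchanged, while Cases 1b and 2b each remove exactly one letter from the alphabet (namely $\alpha_t$ in Case 1b, and the loosing interval in Case 2b) and decrease the number of intervals by one. Thus the number of intervals $d_n$ of $T^{(n)}$ is non-increasing in $n$, and everything reduces to excluding Cases 1b and 2b: their exclusion gives statement (1) directly (every step is then of type 1a or 2a), and since 1a and 2a preserve $d_n$, it also gives statement (2) (the number of intervals is constantly $|\mathcal{A}|$).

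The elementary observation I would record first is that RV-induction only ever \emph{shrinks} the alphabet: no step introduces a new letter, so $\mathcal{A}^{(n+1)}\subseteq \mathcal{A}^{(n)}$ and a removed letter never reappears in any later alphabet. Consequently a removed letter can win or loose only at steps strictly before its removal, i.e.~it plays at most finitely many times; by Definition~\ref{def:RVstable}, every letter of an RV-stable $T$ either never plays or plays infinitely often, so any removed letter must lie in the ``never plays'' class. This instantly disposes of Case 2b: there the removed letter is the loosing interval, which by the very definition of loosing \emph{does} play (it looses) at the step of its removal, contradicting that it never plays. Hence Case 2b cannot occur in the orbit of an RV-stable $T$.

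The main obstacle is Case 1b, where \emph{neither} rightmost interval wins, so by the winning-letter convention the removed letter $\alpha_t$ does not itself play at the removal step and the quick argument above does not apply verbatim. To exclude it I would combine finiteness of deletions with the stabilization built into RV-stability. Since $d_n$ is non-increasing and bounded below by $1$ (as $T$ is infinitely renormalizable, the induction never stops and $d_n\geq 1$ for all $n$), only finitely many removals can occur along the entire orbit. By the characterization in Remark~\ref{rk:RVstable}, an RV-stable $T$ is already past every step at which a letter stops being played or present; in particular it is past the last removal, so no removal---and a fortiori no Case 1b---occurs in its orbit. Equivalently, a letter $\alpha_t$ removed at step $n$ is present in every earlier alphabet and absent from every later one, which is exactly the ``a letter eventually stops playing/being available'' behaviour that the stable regime excludes.

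Having ruled out Cases 1b and 2b, every elementary step is of type 1a or 2a, which is statement (1); and as these steps fix the alphabet, $d_n\equiv|\mathcal{A}|$ for all $n$, which is statement (2). I expect the only genuinely delicate point to be the treatment of Case 1b: unlike Case 2b, the deleted letter is not seen to play at the deletion step, so the cleanest rigorous phrasing must rest on the fact---implicit in Remark~\ref{rk:RVstable}---that RV-stability is attained precisely once the finitely many alphabet-reducing steps have all taken place, so that constancy of $d_n$ is part of being in the stable regime rather than a consequence of the literal ``plays infinitely often or never'' dichotomy alone.
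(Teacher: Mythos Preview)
Your approach is essentially the paper's: exclude Cases~1b and~2b, from which (1) and (2) follow immediately. Your treatment of Case~2b is clean and matches the paper exactly. The paper, however, does not split the two cases; it gives a single sentence covering both: ``if the number of intervals decreases at the $n$-th RV-induction step, it means that an interval which has played at the $n$-th induction step stops playing, contradicting the definition of RV-stable.'' In other words, the paper regards the removed letter as having \emph{played} at the removal step in both cases.

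The distinction you draw for Case~1b is genuine under the strict convention ``plays $=$ wins or looses'': in Case~1b neither rightmost letter wins, so neither technically plays at that step. But your workaround via Remark~\ref{rk:RVstable} is not airtight. That remark gives a \emph{construction} of an RV-stable iterate (choose $N$ large enough that no letter subsequently stops playing), not a \emph{characterization}; you cannot read off from it that an RV-stable $T$ is necessarily ``past the last removal.'' Indeed, your final paragraph concedes this: you end up asserting that constancy of $d_n$ is \emph{part of} the stable regime rather than a consequence of Definition~\ref{def:RVstable}, which is precisely what the lemma is meant to establish. So the Case~1b argument as written is circular.

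The fix is the paper's implicit one: read ``plays'' broadly as ``is one of the two rightmost intervals at this step.'' Under that reading, in Case~1b the removed letter $\alpha_t$ plays at step $n$ and never again, so it plays finitely often and at least once---exactly the behaviour Definition~\ref{def:RVstable} forbids. This gives a uniform one-line argument for both 1b and 2b. Your more cautious reading exposes a mild looseness in the paper's definitions, but once the intended meaning of ``plays'' is adopted, no appeal to Remark~\ref{rk:RVstable} is needed.
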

\begin{proof}
(1) follows from the fact that the number of intervals \emph{decreases} at the $n$-th step of RV induction only in the cases 1b) or 2b) described in \S~\ref{sec:RV_GIETs}, i.e.~if an interval plays against a gap and looses, or if an interval plays against an interval but neither wins. Hence, for any $n\in\mathbb{N}$, $T^{(n)}$ is always in  case 1a) or 2b), which means that $(2)$ holds.

(2) follows from Definition \ref{def:RVstable}: if the number of intervals decreases at the $n$-th RV-induction step, it means that an interval which has played at the $n$-th induction step stops playing, contradicting the definition of RV-stable.
\end{proof}

\subsubsection{Combinatorial rotation number}
\label{sec:rotnumbers} If $T$ is an RV-stable g-GIET with permutation given by $\pi$, then at each step of the induction an interval wins. Hence we can define the \textit{combinatorial rotation number} of $T$ as the set of data rot$(T)$ = $( \pi, \gamma )$ such that
$$\gamma = \alpha_1 \alpha_2 \alpha_3 \dots$$
where $\alpha_i \in \mathcal{A}$ is the label of the interval that wins at the $i$-th induction step.
\begin{rem}\label{rk:combinatorics} In \cite{Yoccoz_NotesClay}, the rotation number of a IET is a \emph{path} on the \emph{Rauzy-graph}\footnote{The \emph{Rauzy-graph} of a permnutation is the graph whose vertices are all possible combinatorial data obtained by performing Rauzy-Veech induction starting from an IET with combinatorial data $\pi$, and arrows represents top and bottom moves.}. For g-GIETs, one
can record steps of the induction of g-IET on a graph which contains as vertices all \emph{extended} combinatorial data, see \S~\ref{sec:extended_classes}.
\end{rem}

\begin{defs}\label{def:infcomplete} Let $T$ be RV-stable with combinatorial rotation number rot$(T)$ = $( \pi, \gamma )$. Then $T$ is said to be \textit{$\infty$-complete} if every $\alpha \in \mathcal{A}$ appears infinitely often in $\gamma$.
\end{defs}

We recall that if $T$ is an RV-stable \emph{standard} IET, the combinatorial rotation number $rot(T)$ is infinite-complete, as it was shown in \cite{MMY_Cohomological} (see also \cite{Yoccoz_NotesClay}).

\subsubsection{Semi-conjugacy for g-GIETs}\label{sec:semiconjugacy}
Infinite-complete rotation numbers for GIETs have been shown to characterize \emph{semi-conjugacy} classes. To show that this result extends to g-GIETs, let us first give the definition of semi-conjugacy between g-GIETs.

\begin{defs}Let $T: I^t \to I^b$, $T_0: I_0^t \to I_0^b$ be two g-GIETs. We say that $T$ is semi-conjugated to $T_0$ if there exists a map $h: I^t \to I_0^t$ that is continuous, monotone and surjective such that
\begin{align*}
    T_0 \circ h(x) = h \circ T(x)
\end{align*}
for all $x \in I^t$. If in addition $h$ is injective, we say that $T$ and $T_0$ are \textit{conjugated}.
\end{defs}
\subsubsection{Poincar{\'e}-Yoccoz theorem}\label{sec:PoincareYoccoz}
The following result extends the Poincaré theorem for circle diffeomorphisms and is a generalization of the same result proved by Yoccoz in \cite{Poincaré-Yoccoz} for GIETs (i.e without gaps). 

\begin{thm}\label{thm:PoincareYoccoz} Let $T$ be an RV-stable, $\infty$-complete g-GIET on $d \geq 2$ intervals. Then $T$ is semi-conjugated to a minimal IET $T_0$.
\end{thm}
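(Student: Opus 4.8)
The plan is to use Rauzy-Veech induction to produce, from the combinatorial rotation number of $T$, a minimal IET $T_0$ realizing the \emph{same} rotation number, and then to build the semi-conjugacy $h$ as the limit of the combinatorial identification between the dynamical partitions of $T$ and those of $T_0$. Since $T$ is RV-stable, Lemma \ref{lemma:RVstable} guarantees that the number $d\geq 2$ of intervals is constant along the induction and that at every step an \emph{interval} (never a gap) wins; hence $\mathrm{rot}(T)=(\pi,\gamma)$ is an honest infinite path in the Rauzy graph on $d$ letters (cf.\ Remark \ref{rk:combinatorics}).

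First I would construct $T_0$. Because $(\pi,\gamma)$ is $\infty$-complete, the products of Rauzy-Veech matrices $A^{(1)}\cdots A^{(n)}$ become positive infinitely often, so the nested cones $A^{(1)}\cdots A^{(n)}\,\mathbb{R}^d_{>0}$ contract projectively to a single ray. This ray determines, up to normalization, a length vector $\lambda_0$, and I let $T_0$ be the IET on $[0,1)$ with permutation $\pi$ and lengths $\lambda_0$. By the classical theory of IETs, an $\infty$-complete rotation number forces $T_0$ to have no connections, hence to be minimal (by Keane's criterion) and uniquely ergodic; by construction $T_0$ has combinatorial rotation number $(\pi,\gamma)$, the same as $T$.

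Next I would build $h$ from the dynamical partitions. For each $n$, Rauzy-Veech induction on $T$ produces an induced interval $I^{(n)}=[0,\lambda^{(n)})$ and, via the tower representation of $T$ over $T^{(n)}$ (Definition \ref{def:towerrep}), an ordered partition $\mathcal{P}_n(T)$ of $\mathrm{Rep}(I^{(n),t})\subset I^t$ into the floors of the towers. Performing the identical sequence of induction steps on $T_0$ yields an ordered partition $\mathcal{P}_n(T_0)$ of $[0,1)$; since the two inductions follow the same combinatorial path, $\mathcal{P}_n(T)$ and $\mathcal{P}_n(T_0)$ share the same index set, the same left-to-right order, and the same tower heights. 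I then define $h$ on the endpoints of the partitions $\mathcal{P}_n(T)$, $n\in\mathbb{N}$, by sending each such endpoint to the corresponding endpoint of $\mathcal{P}_n(T_0)$; this assignment is order-preserving and consistent across levels because the partitions are nested, and it extends by monotonicity to all of $I^t$. The gaps of $T$, together with any interval of $I^t$ that is never subdivided (such as a wandering interval), lie between two consecutive partition endpoints at every level and are therefore collapsed by $h$ to a single point of $[0,1)$.

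The heart of the argument, and the main obstacle, is to show that this $h$ is a continuous monotone \emph{surjection} onto $[0,1)$. For this I need the mesh of $\mathcal{P}_n(T_0)$ to tend to $0$: this is exactly where $\infty$-completeness enters, since for the minimal, uniquely ergodic IET $T_0$ the diameters of the dynamical partition (equivalently, the lengths of the induced intervals and of their towers) shrink to zero, a standard consequence of unique ergodicity. Granting this, the endpoints of $\{\mathcal{P}_n(T_0)\}_n$ are dense in $[0,1)$, so the monotone extension of $h$ is continuous and onto; monotonicity is immediate from the order-preserving definition, and continuity follows from the vanishing target mesh forcing the oscillation of $h$ over each $\mathcal{P}_n(T)$-element to tend to $0$. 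Finally, the relation $T_0\circ h=h\circ T$ holds because at each level $n$ the map $T$ sends a non-top floor of a tower of $\mathcal{P}_n(T)$ onto the floor above it, while the base floors are permuted by $\pi$ exactly as $T_0$ moves the corresponding floors of $\mathcal{P}_n(T_0)$; passing to the limit in $n$ yields the intertwining relation on all of $I^t$. This proves that $T$ is semi-conjugated to the minimal IET $T_0$.
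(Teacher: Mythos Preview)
Your overall strategy---construct an IET $T_0$ from the rotation number, compare the dynamical partitions at every induction level, and build $h$ as the limit of the order-preserving identification---is the same as the paper's. But there is a genuine gap in how you handle the interaction with gaps.

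You write that Lemma~\ref{lemma:RVstable} guarantees ``at every step an interval (never a gap) wins; hence $\mathrm{rot}(T)=(\pi,\gamma)$ is an honest infinite path in the Rauzy graph on $d$ letters''. The first clause is correct, but the conclusion does not follow. In Case~2a of the induction (interval beats gap), the winning interval is merely shortened, the permutation $(\pi_t,\pi_b)$ is unchanged, and no tower height increases: this is \emph{not} an edge of the standard Rauzy graph, and the corresponding Rauzy matrix is the identity. Consequently $(\pi,\gamma)$ is not a Rauzy path, and your sentence ``performing the identical sequence of induction steps on $T_0$'' is ill-posed, since an IET has no gaps and cannot undergo Case~2a moves. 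The nested-cone argument for constructing $T_0$ likewise needs the products of genuine Rauzy matrices, not identities interspersed at the gap steps.

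The paper repairs exactly this: it passes from $\gamma$ to the subsequence $\tilde\gamma$ obtained by deleting the letters that won against a gap, observes that $\tilde\gamma$ is still $\infty$-complete (a letter can win against a gap only if it already won at the previous step), realizes $(\pi,\tilde\gamma)$ by an IET $T_0$, and then proves by induction that at the matched times $i_n$ (for $T$) and $n$ (for $T_0$) the tower heights and the left-to-right order of the floors agree---using that the gap-winning steps leave both the heights and the order unchanged. Once you insert this reindexing, the rest of your argument (density of the endpoints on the $T_0$ side by minimality, monotone extension, intertwining) goes through essentially as you wrote it.
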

\noindent The proof (given here below) follows closely the proof given by Yoccoz in \cite{Yoccoz_NotesClay} in the case when $T$ has no gaps, but requires some additional arguments. The main idea is that, since the combinatorial rotation number characterizes the steps of the RV induction, for each $n \in \mathbb{N}$, both $T$ and $T_0$ can be represented as \emph{towers} over  $T^{(n)}$ (see Definition~\ref{def:towerrep} whose floors arrange themselves in the \emph{same order} within the interval $[0,1)$. By letting $n$ go to infinity, one obtains a \textit{conjugacy} of $T$ and $T_0$ on the set of orbits of the left endpoints of the intervals of $T$ and $T_0$, which can then be extended to a \textit{semi-conjugacy} between $T$ and $T_0$ on the entire domains.

\begin{proof} Let $T$ be a g-GIET with gaps. Let rot$(T)$ = $( \pi, \gamma )$ be the rotation number of $T$, where $\gamma = \alpha_1 \alpha_2 \alpha_3 \dots$. Let $\tilde{\gamma}$ be obtained from $\gamma$ by \textit{removing} all letters which win against a gap, meaning that
\begin{align}\tilde{\gamma} = \alpha_{i_1} \alpha_{i_2} \alpha_{i_3} \dots
\end{align}
where for all $n \in \mathbb{N}$, $\alpha_{i_n}$ wins against an interval at the $i_n$-th induction step and for all $n$ with $n \in \mathbb{N} \backslash (i_n)_{n \in \mathbb{N}}$, $\alpha_n$ wins against a gap at the $n-$th induction step. Note that for a letter to win against a gap at the $n-th$ induction step, the letter must have won at the $(n-1)$-th induction step as well, hence also in $\tilde{\gamma}$ every letter in  $\mathcal{A}$ appears infinitely often. It  follows from Corollary 5 on p. 42 in \cite{Poincaré-Yoccoz} that there exists an RV-stable IET $T_0: I_0^t \to I_0^b$ with combinatorial rotation number rot($T_0$)=$(\pi, \tilde{\gamma})$.

For every $n \in \mathbb{N}$, consider the tower representation $\text{Rep}(I^{t,n})$ (see Section \S~\ref{sec:towers}) of $T$ over $T^{(n)}$ where $k^n_{\alpha}$ is the first return time of $I^{t,n}_{\alpha}$ to $I^{t,n}$,

\begin{align}
    \text{Rep}(I^{t,n}) = \bigcup_{\alpha \in \mathcal{A}} \text{Tow}(I_{\alpha}^{t,n})\\
    \text{Tow}(I_{\alpha}^{t,n}) = \bigcup_{i=1}^{k^n_\alpha-1} T^{i}(I^{t,n}_{\alpha})
\end{align}

Let $(\alpha, i) \in \mathcal{A} \times \mathbb{N}$ denote the label of the interval $T^{i}(I^{t,n}_{\alpha}) \subset $ Rep $(I^{t,n})$, and consider the map $ord_n$, defined on some subset of $\mathcal{A} \times \mathbb{N}$, which describes the order of the intervals, i.e
\begin{equation}\label{eq:order}
     ord_n(\alpha,i) = l
\end{equation}
if and only if the interval with label $(\alpha, i)$ is the $l$-th interval in $[0,1]$ of Rep $(I^{t,n})$ counted from left to right. Define $\text{Rep}(I_0^{t,n})$, $\text{Tow}(I_{0,\alpha}^{t,n})$ and $ord_{n,0}$ analogously for $T_0$, where $k^n_{0,\alpha}$ is the first return time of $I^{t,n}_{0,\alpha}$ to $I_0^{t,n}$.

Let $n \in \mathbb{N} \backslash (i_n)_{n \in \mathbb{N}}$, i.e $T^{(n)}$ is obtained from $T^{(n-1)}$ by performing one step of Rauzy-Veech induction where a gap wins. Then
\begin{align}
    k_{\alpha}^{n} = k_{\alpha}^{n-1},\\
    ord_{n} = ord_{n-1},
\end{align}
for all $\alpha \in \mathcal{A}$.
In particular, for all $n \in \mathbb{N}$ and all $\alpha \in \mathcal{A}$.
\begin{equation}\label{eq:kfirst}
    k_{\alpha}^{i_n+1} = k_{\alpha}^{i_{n+1}}, \hspace{2mm}\\
\end{equation}
\begin{equation}\label{eq:ksecond}
   ord_{i_n+1} = ord_{i_{n+1}}.
\end{equation}
We now want to prove by induction that for all $\alpha \in \mathbb{A}, n \in \mathbb{N}$,
\begin{equation}\label{eq:kequal}
    k_{\alpha}^{i_n} = k_{0,\alpha}^{n}
\end{equation}
\begin{equation}\label{eq:kequal2}
    ord_{i_n} = ord_{0,n},
\end{equation}
Since $T$ and $T_0$ have the same permutation $\pi$, the claim follows for $n = 0$. Assume the claim is true for $n=k$. Then the same letter wins at the $n$-th induction step of $T_0$ and the $i_n$-th induction step of $T$.
W.l.o.g. assume that the letter at the top wins, and let $\gamma$ be the winner and $\beta$ be the looser. Then $k_{\beta}^{i_n+1} = k_{\beta}^{i_n} +  k_{\gamma}^{i_n}$ and $k_{\alpha}^{i_n+1} = k_{\alpha}^{i_n}$ for all $\alpha \neq \beta$, and similarly also $k_{0,\beta}^{n+1} = k_{0,\beta}^{n} +  k_{0,\gamma}^{n}$ and $k_{0,\alpha}^{n+1} = k_{0,\alpha}^n$ for all $\alpha \neq \beta$. Hence, by the induction assumption, also  $k_{\alpha}^{i_n+1} = k_{0,\alpha}^{n+1}$, and hence by (\ref{eq:kfirst}) it follows $k_{\alpha}^{i_{n+1}} = k_{0,\alpha}^{n+1}$, which concludes the proof of (\ref{eq:kequal}).

Furthermore,
\begin{align} \text{Tow}(I_{\beta}^{t,i_n+1}) &\subset Tow(I_{\beta}^{t,i_n}) \cup \text{Tow}(I_{\gamma}^{t,i_n}),\\
\text{Tow}(I_{\gamma}^{t,i_n+1}) &\subset Tow(I_{\gamma}^{t,i_n}),\\
\text{Tow}(I_{\alpha}^{t,i_n+1}) &\subset Tow(I_{\alpha}^{t,i_n}) \text{ for all } \alpha \in \mathcal{A} \backslash \{\beta, \gamma \},
\end{align}
where in each expression, each interval on the left is contained in exactly one interval on the right, and the same inclusions hold for the map $T_0$. Therefore, since the intervals on the right hand side have the same order for $T$ and $T_0$ by the induction assumption, it follows that the intervals on the left hand side also have the same order for $T$ and $T_0$, in particular $ord_{i_n+1} = ord_{0,n+1}$ and hence by (\ref{eq:ksecond}) it follows that $ord_{i_{n+1}} = ord_{0,n+1}$ which concludes the proof of (\ref{eq:kequal2}).

Now pick $\alpha \in \mathcal{A}$ such that $\alpha$ looses at the bottom infinitely many times (this is possible since at every induction step, a letter has to loose, and there are only finitely many letters).

Let $l_{\alpha}^t$ denote the left endpoint of the interval $I_{\alpha}^t$, and $l_{0,\alpha}^t$ denote the left endpoint of $I_{\alpha}^t$. Since $\alpha$ looses infinitely often at the bottom (i.e infinitely often, the tower corresponding to $\alpha$ \textit{grows} in forward time, see below), and since $l_{\alpha}^t$ ($l_{0,\alpha}^t$) are always the left endpoint of an interval in the tower corresponding to $\alpha$, it follows that for any $n \in \mathbb{N}$, there exists $m_n \in \mathbb{N}$, $m_n \to \infty$ as $n \to \infty$, such that
\begin{equation}\label{eq:tow1}
\{l_{\alpha}^{t}, T(l_{\alpha}^{t}), \dots, T^{m_n}(l_{\alpha}^{t}) \} \subset \text{Tow}(I_{\alpha}^{t,n}),
\end{equation}
\begin{equation}\label{eq:tow2}
\{l_{0,\alpha}^{t}, T(l_{0,\alpha}^{t}), \dots, T^{m_n}(l_{0,\alpha}^{t}) \} \subset \text{Tow}(I_{0,\alpha}^{t,n}).
\end{equation}
Here the fact that $m_n$ is the same both for $T$ and $T_0$ comes from the fact that by (\ref{eq:kequal}), the heights of $\text{Tow}(I_{\alpha}^{t,n})$ and $\text{Tow}(I_{0,\alpha}^{t,n})$ are equal. By  (\ref{eq:kequal2}), the orbits in (\ref{eq:tow1}) and (\ref{eq:tow2}) have the same order in $I^t$ and $I_0^t$.

Let now $\mathcal{O}_T^+(l_{\alpha}^t)$ denote the forward orbit of $T$ through $l_{\alpha}^t$, and $\mathcal{O}_{T_0}^+(l_{0,\alpha}^t)$ denote the forward orbit of $T_0$ through $l_{0,\alpha}^t$. Since $m_n \to \infty$, also the sets $\mathcal{O}_T^+(l_{\alpha}^t)$ and $\mathcal{O}_{T_0}^+(l_{0,\alpha}^t)$ have the same order in $I^t$ and $I_0^t$, meaning that the map
\begin{align}
h: \mathcal{O}_T^+(l_{\alpha}^t) \to \mathcal{O}_{T_0}^+(l_{0,\alpha}^t) \\
h(T^{j}(l_{\alpha}^{t,n})) = T_0^{j}(l_{0,\alpha}^{t,n})
\end{align}
is monotone. Furthermore, by construction, $h$ conjugates $T$ to $T_0$ on $\mathcal{O}_T^+(l_{\alpha}^t)$, i.e
\begin{align}
h \circ T (x) = T_0 \circ h (x), \hspace{2mm} x \in \mathcal{O}_T^+(l_{\alpha}^t).
\end{align}

If $\bar{\mathcal{O}}_T^+(l_{\alpha}^t), \bar{\mathcal{O}}_{T_0}^+(l_{0,\alpha}^t)$ denote the closure of $\mathcal{O}_T^+(l_{\alpha}^t)$, $\mathcal{O}_{T_0}^+(l_{0,\alpha}^t)$ in $I^t, I_0^t$, then $h$ can be extended to a continuous function
\begin{align}
\bar{h}: \bar{\mathcal{O}}_T^+(l_{\alpha}^t) \to \bar{\mathcal{O}}_{T_0}^+(l_{0,\alpha}^t) = I_0^t.
\end{align}
which conjugates $T$ to $T_0$ on $\bar{\mathcal{O}}_T^+(l_{\alpha}^t)$. Note that $\bar{\mathcal{O}}_{T_0}^+(l_{0,\alpha}^t) = I_0^t$, since $T_0$ is a minimal IET, meaning every positive (semi-)orbit is dense in $I_0^t$. To extend the domain of $\bar{h}$ to all of $I^t$, define $H:I^t \to I_0^t$ to be
\begin{align}
H(x) = \begin{cases}
\bar{h}(x) & x \in \bar{\mathcal{O}}_T^+(l_{\alpha}^t),\\
\bar{h}(x^-) & x \in I^t \backslash \bar{\mathcal{O}}_T^+(l_{\alpha}^t).\\
\end{cases}
\end{align}
where $x^- \in \bar{\mathcal{O}}_T^+(l_{\alpha}^t)$ is the left endpoint of the connected component in $I^t \backslash \bar{\mathcal{O}}_T^+(l_{\alpha}^t)$ which contains $x$. Then $H:I^t \to I_0^t$ defines a semi-conjugacy between $T$ and $T_0$ and the proof is completed.

\end{proof}




\section{Renormalization and Recurrence}\label{sec:renormandrecurrencesec} The goal of this section is to introduce the renormalization scheme that will allows us to show that any given g-GIET can be decomposed into different dynamical components according to Theorem 1.1. In this chapter, we establish the elementary step of this renormalization scheme by using Rauzy-Veech induction for g-GIETs in order to find a first dynamical component of the decomposition. In particular, in Section \S~\ref{sec:renormlimit}, we show that the induction applied to a g-GIET $T$ is able to detect a specific recurrent orbit closure of $T$, namely the `\emph{rightmost}' one (in a sense made more precise in Proposition \ref{mainprop}). In Section \S~\ref{sec:somecombinatorialarguments}, we then show that the induction also allows us to find a region of recurrence for $T$ which contains the recurrent orbit closure and has the structure of a \textit{tower representation} for $T$ (see Section \S~\ref{sec:towers}) over a collection of subintervals of $I^t$.

\subsection{Renormalization limit  of a g-GIET}\label{sec:renormlimit} We define the \textit{renormalization limit} of a g-GIET $T$ and explain its relationship with recurrent orbits of $T$.

\subsubsection{Definition} Consider a g-GIET $T: I^t \to I^b$ on $d$ intervals, as before let
\begin{align}
I_{\pi^{-1}_t(d)} = [l^t, r^t), \hspace{2mm} I_{\pi^{-1}_b(d)} = [l^b, r^b).
\end{align}
Let $T$ be $n$ times renormalizable. We obtain, after $n$ steps of Rauzy-Veech induction, a g-GIET $T^{(n)}$ on $d_n$ intervals with permutation given by $\pi^n = (\pi_t^{n}, \pi_b^n)$, where $d_n \leq d$ is the number of intervals of $T^{(n)}$. We let
\begin{align}
I_{\pi_t^{-n}(d_n)}^n = [l^{t,n}, r^{t,n}), \hspace{2mm} I_{\pi_b^{-n}(d_n)}^n = [l^{b,n}, r^{b,n}),
\end{align}
be the right and leftmost interval of the g-GIET $T^{(n)}$. We are interested in the limit of the right endpoint of the rightmost interval of $T^{(n)}$ as $n$ tends to infinity:



\begin{defs} \label{def:renormlimit}Let $T:I^t \to I^b$ be an infinitely renormalizable g-GIET. Then the \textit{renormalization limit } $l$ of $T$ is defined as
\begin{align*}
l = \lim_{n\to\infty} \hspace{1mm}r^{t,n}. \vspace{1.5mm}
\end{align*}
If $T$ is RV-stable and if $l=0$ we say that $T$ is \textit{indecomposable}.
\end{defs}

\subsubsection{The special case of the cylinder}\label{sec:convention} We prove a first lemma that illustrates how the renormalization limit  of a g-GIET is able to `\emph{detect}' recurrent orbit closures in the case when the rightmost top and bottom label are equal, i.e when they form what we call a \textit{cylinder}. We show that if the rightmost top and bottom endpoints are not equal (i.e $r_t \neq r_b$) and if there are fixed points in the top rightmost interval, then the renormalization limit  is equal to the rightmost of all fixed points and that if there are no fixed points, the renormalization limit  is not contained in the intersection of the rightmost top and bottom interval.

\begin{lem} \label{cylinderlemma} Let $T: I^t \to I^b$ be infinitely renormalizable. Let $r^t \neq r^b$, let $\alpha := \pi^{-1}_t(d) = \pi^{-1}_b(d)$. Then
\begin{align*}
    l \geq \text{max}\hspace{1mm}(l^t,l^b) \iff \exists x_0 \in I_{\alpha}^t \hspace{1mm} \text{such that}\hspace{1mm} T(x_0)=x_0.
\end{align*}
In this case, $T(l) =l$ and $l = $max $\{x \in I_{\alpha}^t \hspace{1mm}| \hspace{1mm}T(x) = x\}$.
\end{lem}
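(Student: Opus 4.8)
The plan is to reduce the statement to an explicit analysis of the orientation-preserving diffeomorphism $g := T|_{I_\alpha^t} \colon [l^t, r^t) \to [l^b, r^b)$ together with its continuous extension $\bar g \colon [l^t, r^t] \to [l^b, r^b]$, which by Assumption iii) of Definition~\ref{def:gGIETs} satisfies $\bar g(l^t) = l^b$ and $\bar g(r^t) = r^b$. By symmetry I may assume $r^t > r^b$; the case $r^t < r^b$ is handled by the same argument applied to $g^{-1}$ (equivalently, by iterating backwards), noting that $g$ and $g^{-1}$ have the same fixed points. The heart of the proof is the observation that, as long as Rauzy--Veech induction stays in Case 2a with the top interval winning --- which is precisely the \emph{cylinder-preserving} case --- the label $\alpha$ remains rightmost on both top and bottom, the left endpoints $l^t, l^b$ are unchanged, and the rightmost right endpoints evolve by a single application of $g$ at each step. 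Tracing the formulas of Case 2a I would prove by induction that $r^{t,n} = g^{n-1}(r^b)$ and $r^{b,n} = g^{n}(r^b)$, so that the sequence of rightmost endpoints is simply the forward $g$-orbit of $r^b$, and that this regime persists exactly as long as $g^{n}(r^b) > \max(l^t, l^b)$.

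First I would treat the case where $g$ has a fixed point in the overlap region $[\max(l^t,l^b), \min(r^t,r^b))$. Let $x^* = \max\{x \in I_\alpha^t : T(x) = x\}$; note $x^*$ lies in the overlap, so $x^* \ge \max(l^t,l^b)$. Since $\bar g(r^t) = r^b < r^t$ and there are no fixed points in $(x^*, r^t]$, the function $x \mapsto g(x) - x$ is negative there, so the orbit $g^n(r^b)$ starts above $x^*$ and decreases monotonically; being bounded below by $x^*$ (as $g$ fixes $x^*$ and preserves order) it converges to a fixed point $\ge x^*$, i.e.\ to $x^*$ itself. Because $g^n(r^b) > x^* \ge \max(l^t,l^b)$ for all $n$, the induction never leaves Case 2a, whence $l = \lim_n r^{t,n} = \lim_n g^{n-1}(r^b) = x^*$. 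This simultaneously yields the implication ``$\exists$ fixed point $\Rightarrow l \ge \max(l^t,l^b)$'' and the final sentence of the statement, namely $T(l)=l$ and $l = \max\{x \in I_\alpha^t : T(x)=x\}$.

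It remains to prove the contrapositive of the other implication: if $g$ has no fixed point in the overlap, then $l < \max(l^t,l^b)$. I would first observe that the absence of a fixed point forces $l^t \ge l^b$: if $l^t < l^b$ then $I_\alpha^b \subseteq I_\alpha^t$, so $\bar g$ maps $[l^t,r^t]$ into itself and the intermediate value theorem applied to $g(x)-x$ (positive at $l^t$, negative at $r^t$) produces a fixed point. Hence $\max(l^t,l^b) = l^t$ and $g(x) < x$ throughout the compact overlap $[l^t,r^b]$, so that $g(x)-x \le -\delta < 0$ there; consequently the decreasing orbit $g^n(r^b)$ drops below $l^t$ after finitely many steps, at which point the induction exits Case 2a (a gap wins, removing $\alpha$) and all later $r^{t,m}$ are $\le l^t$. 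The delicate point --- and the step I expect to be the main obstacle --- is upgrading this to the \emph{strict} inequality $l < l^t$, i.e.\ ruling out that the rightmost right endpoint stabilizes exactly at $l^t$. Here I would exploit that $l^t$ is \emph{not} recurrent, since $\bar g(l^t) = l^b < l^t$ carries it strictly to the left of the overlap, so the right endpoints cannot accumulate at it; I would make this precise by showing that a further finite number of induction steps pushes the rightmost endpoint strictly below $l^t$, treating the degenerate boundary configuration $g^{n}(r^b) = l^t$ by a separate short argument. Combining the two cases gives the claimed equivalence.
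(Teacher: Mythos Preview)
Your approach is essentially the paper's: both track the orbit $g^n(r^b)$ (equivalently $T^n(r^t)$) while the induction remains in the cylinder configuration (Case~2a), and recover the largest fixed point as the limit; the paper is simply terser and does not spell out the ``fixed point / no fixed point'' dichotomy or the uniform-gap argument you use.

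One correction to your edge-case plan for the strict inequality $l < l^t$. Once $g^N(r^b)$ drops below $l^t$ and the label $\alpha$ is removed by a Case~2b step (with $\lambda = l^t$), the map $g$ exits the picture, so ``$l^t$ is not recurrent under $\bar g$'' is not the operative mechanism. The correct argument is structural: since every remaining \emph{bottom} interval was to the left of $I_\alpha^b$, they all lie in $[0,l^b)$, and the new g-GIET has a bottom gap containing $[l^b,l^t)$. At the next induction step the rightmost top endpoint is $\le l^t$ while the rightmost bottom endpoint is $\le l^b < l^t$, so whether Case~2a or~2b applies, the new $\lambda$ is strictly below $l^t$. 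This delivers $l < l^t$ as you want, and incidentally fills a point the paper's own converse (which treats only $l > \max(l^t,l^b)$) leaves implicit.
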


\begin{figure}[h]
\centering
\includegraphics[width=0.6\textwidth]{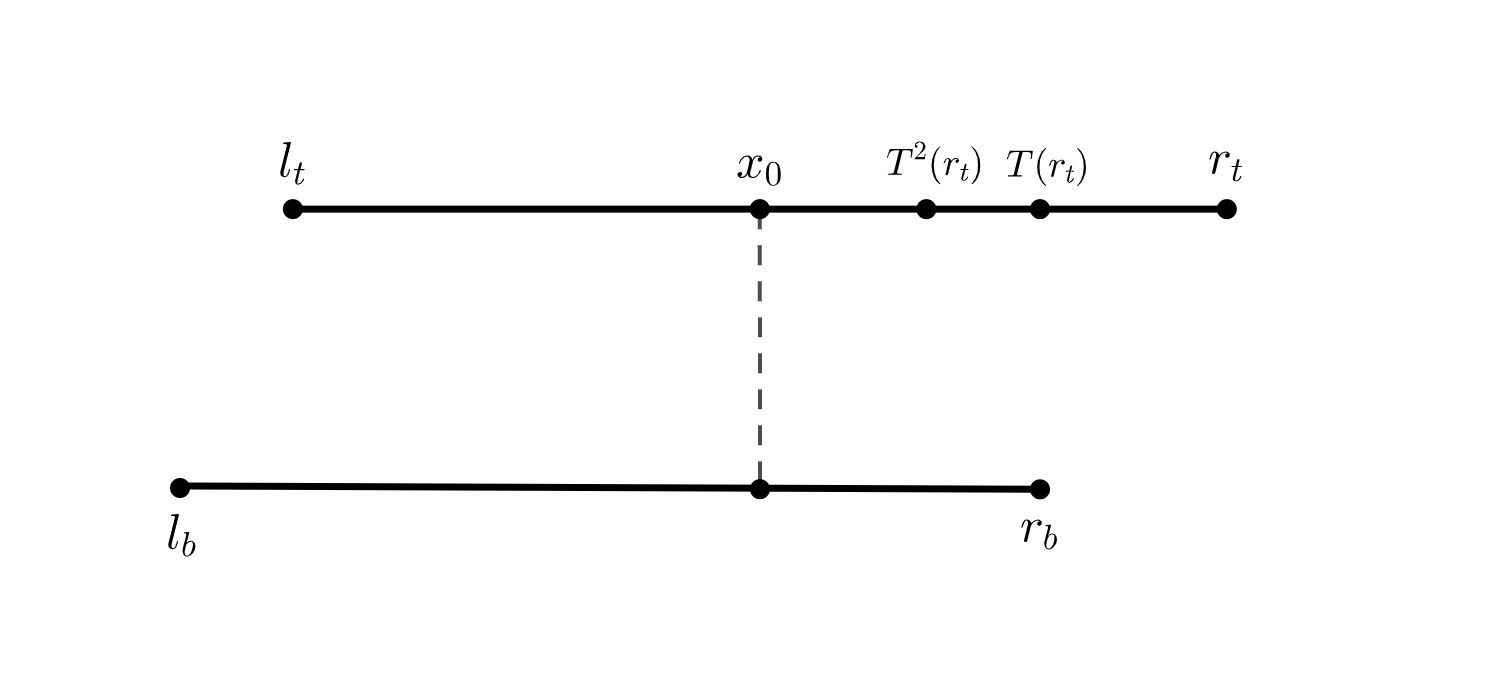}
\caption{$r^t > r^b$ and $\lim_{n\to\infty}T^{n}(r^t) = x_0$.}
\end{figure}

\begin{proof} Assume first there exists a fixed point $x_0$ in $I_{\alpha}^t$. We consider two cases:
\begin{enumerate}[]
\item $r^t > r^b$. Then for any induction step  $n$ it holds $r^{t,n}= T^{n}(r^t)$, where $T^{n}$ is the $n$-th fold composition of $T$ with itself. As $T$ is orientation preserving, for all $x \in I_{\alpha}^t$ it holds that $\lim_{n\to\infty}T^{n}(x) \geq x_0$. We conclude that $l = \lim_{n\to\infty}r^{t,n} = \lim_{n\to\infty}T^{n}(r^t) \geq x_0$.
\item $r^t < r^b$. Then for any induction step $n$ it holds $r^{b,n}=T^{-n}(r^b)$. Furthermore, for any $x \in I_{\alpha}^b$ it holds that $\lim_{n\to\infty}T^{-n}(x) \geq x_0$. But then we conclude $l = \lim_{n\to\infty}r^{t,n} = \lim_{n\to\infty}r^{b,n} = \lim_{n\to\infty}T^{-n}(r^b) \geq x_0$.
\end{enumerate}
In both cases, $l \geq x_0 \geq \text{max}(\hspace{1mm}l^t,l^b)$.
Conversely, in case (1), if $l = \lim_{n\to\infty}T^{n}(r^{t}) > \text{max}\hspace{1mm}(l^t, l^b)$, then as $T$ is continuous, $T(l) =  \lim_{n\to\infty}T^{n+1}(r^{t}) = l$ and $l$ is a fixed point. The same argument applies for case (2), where $T(l) = \lim_{n\to\infty}T^{-n}(r^b) = l$. Together with (1) and (2) it then follows that $l = $max $\{x \in I_{\alpha}^t \hspace{1mm}| \hspace{1mm}T(x) = x\}$.
\end{proof}

\subsection{Some combinatorial arguments}\label{sec:somecombinatorialarguments}Before we can prove that RV-induction for g-GIETs also detects recurrent orbit closures other than fixed points, we must first develop a few combinatorial lemmas and propositions on RV-induction for g-GIETs. The main proposition we prove in this section is Proposition \ref{intervalthatcontainslremainsunchanged}, which states that if we consider the top and bottom interval of an RV-stable g-GIET that contains $l$, then the label of these intervals are invariant under RV-induction and win infinitely often.

We first prove the following simple lemma, which states that the top and bottom interval corresponding to a label $\alpha$ remain unchanged under Rauzy-Veech induction as long as $\alpha$ does not play:

\begin{lem} \label{intervalsthatdontplayremainunchanged} Let $T:I^t \to I^b$ be an infinitely renormalizable g-GIET. Let $\alpha \in \mathcal{A}$ and let $m \in \mathbb{N}$ be such that $\alpha$ plays for the first time at the $m-$th induction step. Then $I_{\alpha}^{t,n} = I_{\alpha}^{t}$ and $I_{\alpha}^{b,n} = I_{\alpha}^{b}$ for all $n \in \{1, \dots, m-1 \}$.
\end{lem}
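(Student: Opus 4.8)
The plan is to prove a \emph{persistence} statement by peeling off one induction step at a time, reducing everything to a single--step claim: \emph{if a label $\alpha$ does not play during one elementary Rauzy--Veech step, then its top and bottom intervals are left untouched.} Granting this, the lemma follows by a straightforward induction on $n$. Since $\alpha$ plays for the first time at step $m$, it plays at none of the steps $1,\dots,n$ whenever $n\le m-1$; in particular it is never a loser and so is never removed from the alphabet, hence $\alpha\in\mathcal{A}^{(k)}$ for all $k\le n$. Applying the single--step claim along the chain $T=T^{(0)}\to T^{(1)}\to\cdots\to T^{(n)}$ then propagates $I_\alpha^{t,k}=I_\alpha^{t}$ and $I_\alpha^{b,k}=I_\alpha^{b}$ from $k=0$ up to $k=n$.

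For the single--step claim the conceptual point is that $T^{(1)}$ is, by definition, the first return map of $T$ to $[0,\lambda^{(1)})$. I would show that for a non--playing label $\alpha$ both $I_\alpha^{t}$ and its image $I_\alpha^{b}=T(I_\alpha^{t})$ are contained in $[0,\lambda^{(1)})$. Once this is in hand, every point of $I_\alpha^{t}$ returns to $[0,\lambda^{(1)})$ after exactly one iterate, so the corresponding branch of the return map is simply $T|_{I_\alpha^{t}}$ with return time $1$; hence $I_\alpha^{t,1}=I_\alpha^{t}$, the label survives unchanged, and $I_\alpha^{b,1}=T^{(1)}(I_\alpha^{t,1})=T(I_\alpha^{t})=I_\alpha^{b}$. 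This also reconciles with the explicit formulas defining $T^{(1)}$, where for the surviving non--extremal labels one reads off $I_\alpha^{t,1}=I_\alpha^{t}$ and $T_\alpha^{(1)}=T|_{I_\alpha^{t}}$ directly.

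To verify the two inclusions I would first record the uniform bound $\lambda^{(1)}\ge\max(l^t,l^b)$, valid in all four cases (with $\lambda^{(1)}=\min(r^t,r^b)>\max(l^t,l^b)$ precisely in Case 2a, and $\lambda^{(1)}=\max(l^t,l^b)$ in Cases 1a, 1b and 2b). If $\alpha$ is non--extremal on the top, i.e.\ $I_\alpha^{t}$ lies strictly to the left of the rightmost top interval $[l^t,r^t)$, then $I_\alpha^{t}\subseteq[0,l^t)\subseteq[0,\lambda^{(1)})$, and the symmetric statement on the bottom gives $I_\alpha^{b}\subseteq[0,l^b)\subseteq[0,\lambda^{(1)})$. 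In the typical situation a non--playing label is non--extremal on both sides and the claim is immediate.

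The one delicate point, which I expect to be the main obstacle, is that a non--playing label need not be non--extremal on \emph{both} sides: in the gap--versus--interval situations a rightmost interval can fail to play because it is its \emph{opposite} side's gap that is involved. Concretely, $\alpha=\alpha_t$ does not play exactly in Case 2a with the bottom interval winning, or in Case 2b with the top gap winning, and symmetrically for $\alpha=\alpha_b$. In each such configuration I would check directly that the threshold is placed so the extremal interval still fits: when $\alpha_t$ does not play one has $r^t\le\lambda^{(1)}$, whence the right--open interval $I_{\alpha_t}^{t}=[l^t,r^t)\subseteq[0,r^t)\subseteq[0,\lambda^{(1)})$ — the half--openness being exactly what saves the boundary case $\lambda^{(1)}=r^t$ — while the companion interval on the other side is non--extremal and is handled as above. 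This half--open bookkeeping is the place where care is required, and it is precisely why the naive shortcut ``non--playing $\Rightarrow$ non--rightmost on both sides'' is false.
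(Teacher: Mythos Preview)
Your argument is correct and in spirit the same as the paper's, which dispatches the lemma in one sentence: by direct inspection of the explicit case-by-case formulas for $T^{(1)}$ in \S\ref{sec:RV g-GIETs}, only the intervals labeled $\alpha_t$ or $\alpha_b$ that are actually involved in the step are modified, while all other intervals are copied verbatim. Your route via the first-return characterisation and the inclusion $I_\alpha^{t},I_\alpha^{b}\subseteq[0,\lambda^{(1)})$ is a more conceptual (and more careful) way to reach the same conclusion, and your treatment of the edge case where a rightmost label fails to play in the gap-versus-interval situations is a genuine point that the paper's one-liner leaves implicit; but no new idea is needed beyond reading off the definition.
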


\begin{proof}
    Follows directly from the definition of Rauzy-Veech induction for g-GIETs in Section \S~\ref{sec:RV g-GIETs}: at each step, only the top and bottom intervals that carry the labels of the players are modified in the next induction step, whereas all other intervals remain the same.
\end{proof}

The next lemma asserts that intervals whose labels win infinitely often are never contained in $[0,l)$.

\begin{lem} \label{winninglettersintersectl}
    Let $T: I^t \to I^b$ be an RV-stable g-GIET and let $l$ be its renormalization limit. Let $\mathcal{A}_{\infty}$ be the set of letters that win infinitely often. Then $I_{\alpha}^{t,n} \cap [l,1) \neq \emptyset$ and $I_{\alpha}^{b,n} \cap [l,1) \neq \emptyset$ for all $\alpha \in \mathcal{A}_{\infty}$ and for all $n \in \mathbb{N}$.
\end{lem}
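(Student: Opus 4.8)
The plan is to reduce the statement to a monotonicity property of the right endpoints of $I_\alpha^{t,n}$ and $I_\alpha^{b,n}$ and then to exploit that a letter of $\mathcal{A}_\infty$ is, at infinitely many steps, the rightmost top or bottom interval. First I would record that for a fixed label $\alpha$ the intervals are \emph{nested}: $I_\alpha^{t,m}\subseteq I_\alpha^{t,n}$ and $I_\alpha^{b,m}\subseteq I_\alpha^{b,n}$ whenever $m\geq n$. This follows from the explicit formulas of the elementary step in \S\ref{sec:RV g-GIETs}: if $\alpha$ does not play, its top and bottom intervals are unchanged by Lemma \ref{intervalsthatdontplayremainunchanged}; and in each of the cases $1a$ and $2a$ (the only ones occurring for an RV-stable map, by Lemma \ref{lemma:RVstable}) the interval carrying a playing label, whether it wins or looses, is replaced by a subinterval of itself. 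Since $I_\alpha^{t,m}\subseteq I_\alpha^{t,n}$ for $m\geq n$, it suffices to prove that $I_\alpha^{t,n}$ and $I_\alpha^{b,n}$ each meet $[l,1)$ for \emph{infinitely many} $n$; equivalently, writing an interval as $[L,R)$, that its right endpoint satisfies $R>l$.

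Next I would use that $r^{t,n}$ (and, symmetrically, $r^{b,n}$) is non-increasing with limit $l$, so $r^{t,n}\geq l$ for all $n$. Moreover a top win \emph{strictly} contracts the domain: in Case $1a$ one has $\lambda^{(n+1)}=\max(l^{t,n},l^{b,n})<r^{t,n}$, and in Case $2a$ (top wins) $\lambda^{(n+1)}=\min(r^{t,n},r^{b,n})=r^{b,n}<r^{t,n}$; hence $r^{t,n+1}\leq\lambda^{(n+1)}<r^{t,n}$, so $r^{t,n}>l$ at every top-win step $n$. At such a step $\alpha$ is the rightmost top interval, $I_\alpha^{t,n}=[l^{t,n},r^{t,n})$, which therefore meets $[l,1)$; the analogous fact holds for $I_\alpha^{b,n}$ at every bottom-win step. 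Since $\alpha\in\mathcal{A}_\infty$ wins at infinitely many steps, this already settles the claim for $I_\alpha^{t,n}$ when $\alpha$ wins at the top infinitely often, and for $I_\alpha^{b,n}$ when it wins at the bottom infinitely often.

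The remaining point, which I expect to be the main obstacle, is the \emph{cross-orientation} transfer: for an RV-stable map a letter may win with only one orientation (e.g. be the perpetual rightmost bottom winner), so one must still show that $I_\alpha^{b,n}$ meets $[l,1)$ at a top-win step, and $I_\alpha^{t,n}$ at a bottom-win step. At a winning step $n$ the two intervals $I_\alpha^{t,n}$ and $I_\alpha^{b,n}$ are exchanged by the single branch $T^{(n)}|_{I_\alpha^{t,n}}$, an orientation-preserving homeomorphism, so meeting $[l,1)$ transfers from one to the other once one knows that $l$ is a \emph{dynamical barrier} for $T^{(n)}$, i.e. that $T^{(n)}$ maps $[l,1)\cap I^{t,n}$ into $[l,1)$ and $(T^{(n)})^{-1}$ maps $[l,1)\cap I^{b,n}$ into $[l,1)$. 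I would establish this invariance by a limiting argument on the right endpoints: $l=\lim_n r^{t,n}$ is approached strictly from above, and order-preservation of the branches forbids a point $\geq l$ from being sent strictly below $l$ without forcing some rightmost endpoint $r^{t,k}$ or $r^{b,k}$ below $l$, contradicting $r^{t,k},r^{b,k}\geq l$. Granting the barrier, combining the transfer at the (infinitely many) winning steps with the nesting from the first paragraph yields $R>l$ for both $I_\alpha^{t,n}$ and $I_\alpha^{b,n}$ at \emph{every} $n$, which is the assertion.
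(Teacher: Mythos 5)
Your proposal has two genuine gaps, and the first one destroys its overall structure. The nesting claim in your first paragraph --- that $I_\alpha^{t,m}\subseteq I_\alpha^{t,n}$ and $I_\alpha^{b,m}\subseteq I_\alpha^{b,n}$ for all $m\geq n$ --- is false, and it fails precisely in Case 1a, i.e.~in the case that by Lemma \ref{lemma:RVstable} occurs at every step where two intervals play. When a letter looses, its interval on the side where it played is indeed only cut, but its interval on the \emph{opposite} side is not replaced by a subinterval of itself: it is re-inserted inside the \emph{winner's} interval, at a location disjoint from its old position. Concretely, if the top wins in Case 1a ($l^t<l^b$, inducing on $[0,l^b)$), the looser $\alpha_b$ acquires the branch $T^2$, and since $I_{\alpha_b}^{b}=[l^b,r^b)\subset[l^t,r^t)=I_{\alpha_t}^{t}$, its new bottom interval satisfies
$$I_{\alpha_b}^{b,1}=T^2\bigl(I_{\alpha_b}^{t,1}\bigr)\subset T\bigl(I_{\alpha_t}^{t}\bigr)=I_{\alpha_t}^{b},$$
which is \emph{disjoint} from $I_{\alpha_b}^{b}$. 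This ``cut and re-insert inside the winner'' mechanism is the defining feature of Rauzy--Veech induction, so meeting $[l,1)$ at infinitely many steps does not propagate to all steps, and your reduction collapses. The second gap is the ``dynamical barrier'': $[l,1)$ is in general \emph{not} invariant. In a quasiminimal domain, if $l$ lies in the interior of the bottom interval $I_\beta^{b}$ containing it, then $I_\beta^{b}\cap[0,l)$ is a backward wandering interval all of whose backward iterates lie in $[l,1)$ (this is the content of Lemma \ref{alwaysloose} applied to $T^{-1}$ together with Lemma \ref{lem:wanderinginterval}, as used in Section \ref{sec:decompositionproof}); in particular $T^{-1}\bigl(I_\beta^{b}\cap[0,l)\bigr)\subset[l,1)$, exhibiting points $\geq l$ whose image is strictly below $l$. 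Your proposed justification via ``order-preservation of the branches'' cannot repair this: branches are order preserving only on each continuity interval and may carry a whole piece of $[l,1)$ below $l$ without any rightmost endpoint ever dropping below $l$.

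The parts of your argument that are correct --- $r^{t,n}$ non-increasing with $r^{t,n}\geq l$, and the winner's interval on the side where it wins meeting $[l,1)$ at each winning step --- are not where the difficulty lies. The paper's proof runs the logic in the opposite direction and actually \emph{exploits} the non-nesting you assumed away: it reduces to $n=0$ (each $T^{(n)}$ is again RV-stable with the same limit $l$) and argues by contradiction. If $I_\alpha^{t}\subset[0,l)$, then since every rightmost interval has right endpoint $\geq l$, the letter $\alpha$ never plays on top and so must win infinitely often at the bottom; but each bottom win of $\alpha$ against a letter $\beta$ places the new top interval of $\beta$ \emph{inside} $I_\alpha^{t}\subset[0,l)$, trapping $\beta$ below $l$ forever, so that each letter can loose to $\alpha$ at most once; since there are finitely many letters and finitely many gaps, $\alpha$ wins only finitely often --- a contradiction. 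Any repair of your approach would have to track, as the paper does, how loosing letters get displaced and trapped, rather than assume that intervals of a fixed label stay put.
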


\begin{proof} Since $T^{(n)}$ is again RV-stable for all $n \in \mathbb{N}$, it is enough to show that $I_{\alpha}^{t} \cap [l,1) \neq \emptyset$ and $I_{\alpha}^{b} \cap [l,1) \neq \emptyset$ for all $\alpha \in \mathcal{A}_{\infty}$. Assume $I_{\alpha}^t \cap [l,1) = \emptyset$. Note that then $I_{\alpha}^{t,n} \cap [l,1) = \emptyset$ for all $n \in \mathbb{N}$ and therefore, as $\alpha \in \mathcal{A}_{\infty}$, the interval $I_{\alpha}^b$ has to win infinitely often. But if $n \in \mathbb{N}$ is such that $I_{\alpha}^{b,n}$ wins, if we denote the interval that looses by $I_{\beta}^{t,n}$, then $I_{\beta}^{t,n+1} \subset I_{\alpha}^t$. Hence for $m > n$ both $I_{\alpha}^{t,m}$ and $I_{\beta}^{t,m}$ are contained in $[0,l)$ and the label $\beta$ cannot loose against the label $\alpha$ anymore. As this is true for any interval which looses against $I_{\alpha}^{t,n}$ for some $n \in \mathbb{N}$, the label $\alpha$ can only win finitely many times against other intervals. But $\alpha$ can also only win finitely many times against a gap as there are only finitely many gaps. This is a contradiction to $\alpha$ winning infinitely often, and hence $I_{\alpha}^t \cap [l,1) \neq \emptyset$. The argument to show that $I_{\alpha}^b \cap [l,1) \neq \emptyset$ follows analogously.
\end{proof}

We are now in the position to prove the main proposition of this chapter.

\begin{prop} \label{intervalthatcontainslremainsunchanged} Let $T: I^t \to I^b$ be an RV-stable g-GIET. Then there exist $\alpha, \beta \in \mathcal{A}_{\infty}$ such that $l \in I_{\alpha}^{t,n} \cap I_{\beta}^{b,n}$ for all $n \in \mathbb{N}$.
\end{prop}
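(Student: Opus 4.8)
The plan is to identify, for each $n$, the label of the top interval of $T^{(n)}$ that contains $l$, to show that this label lies in $\mathcal{A}_\infty$ and is independent of $n$, and then to run the same argument verbatim on the bottom intervals; by the symmetry between the top and bottom roles in Rauzy--Veech induction this produces the two labels $\alpha$ and $\beta$ (which need not coincide). Throughout I would use three standing facts: that the restriction intervals $[0,\lambda^{(n)})$ are nested, so the sequence $r^{t,n}$ of right endpoints is non-increasing with $\lim_n r^{t,n}=l$ and in particular $l\le r^{t,n}\le\lambda^{(n)}$ and $\lambda^{(n+1)}\ge r^{t,n+1}\ge l$; that by Lemma~\ref{winninglettersintersectl} every interval carrying a label in $\mathcal{A}_\infty$ meets $[l,1)$; and that by Lemma~\ref{lemma:RVstable} an RV-stable g-GIET only ever realizes Cases 1a and 2a, so at each step a genuine interval wins and the number of intervals is constant.

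First I would show that $l$ actually lies in a top interval of $T^{(n)}$ for every $n$, i.e.\ that $l$ is neither a right endpoint nor contained in a top gap. Since each $r^{t,n}$ is a right endpoint of a top interval it never lies in the interior of a gap, so its limit $l$ can only be the left endpoint of an interval or of a gap; ruling out the gap case is where Lemma~\ref{winninglettersintersectl} enters, together with the observation that a top gap sitting strictly to the left of $r^{t,n}$ is interior and hence never plays, so by (the gap analogue of) Lemma~\ref{intervalsthatdontplayremainunchanged} it would remain frozen around $l$, forcing the rightmost right endpoint to stay bounded away from $l$ and contradicting $r^{t,n}\to l$. The degenerate possibility that one side wins at every sufficiently late step, which would place $l$ on the boundary, is exactly the cylinder situation, and is either excluded or identified with a fixed point $T(l)=l$ lying \emph{inside} the interval by Lemma~\ref{cylinderlemma}.

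Let $\alpha_n$ denote the label of the top interval $I_{\alpha_n}^{t,n}\ni l$. By Lemma~\ref{intervalsthatdontplayremainunchanged} this interval changes only when $\alpha_n$ plays, and I would proceed in three steps. The label $\alpha_n$ cannot eventually stop playing, since a frozen interval containing $l$ has right endpoint strictly above $l$, again forcing $r^{t,m}$ to stay bounded away from $l$. When $\alpha_n$ does play while its top interval contains $l$ it cannot be the loser: losing as a top interval would set $\lambda^{(n+1)}=l^{t,n}$, pushing the restriction point down to the left endpoint of the interval containing $l$ and hence, since $\lambda^{(n+1)}\ge l$, onto $l$ itself, which the first step has excluded. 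Therefore $\alpha_n$ wins infinitely often, so $\alpha_n\in\mathcal{A}_\infty$. Finally, to see that the label is constant I would check that $l$ survives in the same-labeled interval across each admissible step: when the interval containing $l$ wins it is merely truncated on the right at $\lambda^{(n+1)}\ge l$, so $l$ stays inside with the same label, and when some other label plays Lemma~\ref{intervalsthatdontplayremainunchanged} leaves $I_{\alpha_n}^{t,n}$ untouched.

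The main obstacle is the bottom-winning steps (Cases 1a and 2a with the bottom interval winning), where every top interval is redefined as a $T^{-1}$-preimage and the top intervals are reordered, so that a priori $l$ could migrate to a differently labeled top interval or into a gap. The key to controlling this is again $\lambda^{(n+1)}\ge l$, so that the cut only discards points to the right of $l$: concretely, if $\alpha_n$ is \emph{not} the winning rightmost bottom label then $I_{\alpha_n}^{b,n}\subset[0,r^{t,n})$, whence $T(l)$ is untouched by the truncation and $l\in T^{-1}(I_{\alpha_n}^{b,n})=I_{\alpha_n}^{t,n+1}$ with the same label. The remaining case, in which $\alpha_n$ is itself the winning rightmost bottom label, must be handled by showing that $T(l)$ still returns to $[0,\lambda^{(n+1)})$ without entering a gap; this is precisely the point at which one must use that $l$ is the renormalization limit and not an arbitrary point, and it is the crux of the argument. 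Once this is settled, the constancy of $\alpha_n$ (and, by the symmetric argument, of $\beta_n$) follows, giving $\alpha,\beta\in\mathcal{A}_\infty$ with $l\in I_\alpha^{t,n}\cap I_\beta^{b,n}$ for all $n$, as claimed.
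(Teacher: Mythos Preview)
Your outline follows the same overall strategy as the paper (locate $l$ in a top interval, track its label, show the label is in $\mathcal{A}_\infty$ and does not change), but it has a genuine gap at exactly the point you yourself flag as ``the crux of the argument'': the step in which $\alpha_n$ is the winning rightmost bottom label. You write that this ``must be handled by showing that $T(l)$ still returns to $[0,\lambda^{(n+1)})$'' and that ``once this is settled, the constancy of $\alpha_n$ follows''---but you do not settle it. That deferred case is not a detail; it is where all the work lies.

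The paper closes this gap with a preparatory observation and a three-case analysis. First it shows (assuming $\alpha\neq\beta$) that $I_\alpha^{b,n}$ lies \emph{entirely} in $(l,1)$: if instead $I_\alpha^{b,n}\subset[0,l)$, then $\alpha$ would win at the top at every subsequent step, hence $\alpha\in\mathcal{A}_\infty$, and Lemma~\ref{winninglettersintersectl} then forces $I_\alpha^{b,n}\cap[l,1)\neq\emptyset$, a contradiction. It follows that when $\alpha$ first plays, it does so at the \emph{bottom}, against some top label~$\gamma$. Three subcases arise: (a)~$\alpha$ wins and $T^{(n)}(l)\in I_\gamma^{t,n}$; (b)~$\gamma$ wins; (c)~$\alpha$ wins and $T^{(n)}(l)\notin I_\gamma^{t,n}$. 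The dangerous case is (a), and it is eliminated by a second appeal to Lemma~\ref{winninglettersintersectl}: after such a step one has $I_\alpha^{t,n+1}\cap[l,1)=\emptyset$, so $\alpha$ could never win again, contradicting the fact that $\alpha$ just won. In cases (b) and (c) a direct check gives $l\in I_\alpha^{t,n+1}$, and induction carries the label forward. This double use of Lemma~\ref{winninglettersintersectl}---once to locate $I_\alpha^b$ to the right of $l$, once to exclude case~(a)---is precisely what your outline is missing.

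Two smaller points. Your first step (ruling out that $l$ sits in a gap) is more delicate than you indicate: an interior gap \emph{can} eventually become rightmost and play; the argument is rather that in an RV-stable map a playing gap must lose, which would force the renormalization limit strictly below its left endpoint. And your description of bottom-winning steps as reordering \emph{all} top intervals overstates things: by Lemma~\ref{intervalsthatdontplayremainunchanged} only the two playing labels have their intervals modified, which is why the non-crux cases are easy.
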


\begin{proof} If $l$ was contained in a top or bottom gap, then there exists $n \in \mathbb{N}$ such that the gap plays at the $n-$th induction step and remains unchanged for all previous induction steps. But as $T$ is RV-stable, the gap must loose, as otherwise the number of intervals of $T$ would decrease by one. But then the renormalization limit  $l$ must be strictly smaller than the leftmost endpoint of the gap, which is a contradiction. Hence there exist $\alpha, \beta \in \mathcal{A}$ such that  $l \in I_{\alpha}^{t,n} \cap I_{\beta}^{b,n}$.

We first consider the label $\alpha$. By Lemma \ref{intervalsthatdontplayremainunchanged} there exists $n \in \mathbb{N}$ such that the label $\alpha$ plays at the $n$-th induction step and $I_{\alpha}^{t,n}=I_{\alpha}^t$ and $I_{\alpha}^{b,n} = I_{\alpha}^b$.

If $\alpha = \beta$, then the proposition follows from Lemma \ref{cylinderlemma} (note that the rightmost endpoints of $I_{\alpha}^{t,n}$ and $I_{\alpha}^{b,n}$ cannot be the same as otherwise the renormalization limit  must be strictly smaller than $l$). Hence we may assume that  $\alpha \neq \beta$.


We claim that $I_{\alpha}^{b,n} \cap [l,1) \neq \emptyset$: assume by contradiction that $I_{\alpha}^{b,n} \subset [0,l)$. Then $\alpha$ plays for the first time at the top, i.e at time $n$ the interval $I_{\alpha}^{t,n}$ either wins or looses. But as $I_{\alpha}^{t,n}$ contains $l$, the interval $I_{\alpha}^{t,n}$ must win at the $n$-th induction step and thus $l \in I_{\alpha}^{t,n+1}$. Hence, at the $(n+1)$-th induction step, $I_{\alpha}^{t,n+1}$ must win again and $l \in I_{\alpha}^{t,n+2}$. By induction, the interval $I_{\alpha}^{t,m}$ must win for all $m \geq n$. But then by Lemma \ref{winninglettersintersectl} we deduce that $I_{\alpha}^{b,n} \not \subset [0,l)$.

Hence we can assume that $I_{\alpha}^{b,n} \cap [l,1) \neq \emptyset$. As $\alpha \neq \beta$ it follows that $l \notin I_{\alpha}^{b,n}$ and in particular $I_{\alpha}^{b,n} \subset (l,1)$. We further note that $I_{\alpha}^{b,n} \not \subset I_{\alpha}^{t,n}$, as otherwise there exists a fixed point $x \in I_{\alpha}^{b,n}$ for $T^{(n)}$ and by Lemma \ref{cylinderlemma} it follows that $l \in I_{\alpha}^{b,n}$, which we excluded. Hence, sup $I_{\alpha}^{b,n}$ > sup $I_{\alpha}^{t,n}$, and if $\alpha$ plays for the first time at the $n$-th induction step, then it must play at the bottom.

Now let $\gamma \in \mathcal{A}$ be such that $I_{\alpha}^{b,n}$ and $I_{\gamma}^{t,n}$ play against each other at the $n$-th induction step (if $\alpha$ plays against a gap, the argument follows analogously). Then one of the following cases holds: either a) $\alpha$ wins and $T^{(n)}(l) \in I_{\gamma}^{t,n}$, or b) $\gamma$ wins, or c) $\alpha$ wins and $T^{(n)}(l) \notin I_{\gamma}^{t,n}$ (see also Figure \ref{fig:9} and \ref{fig:10}).

\begin{figure}[h]
\centering
\includegraphics[width=1.0\textwidth]{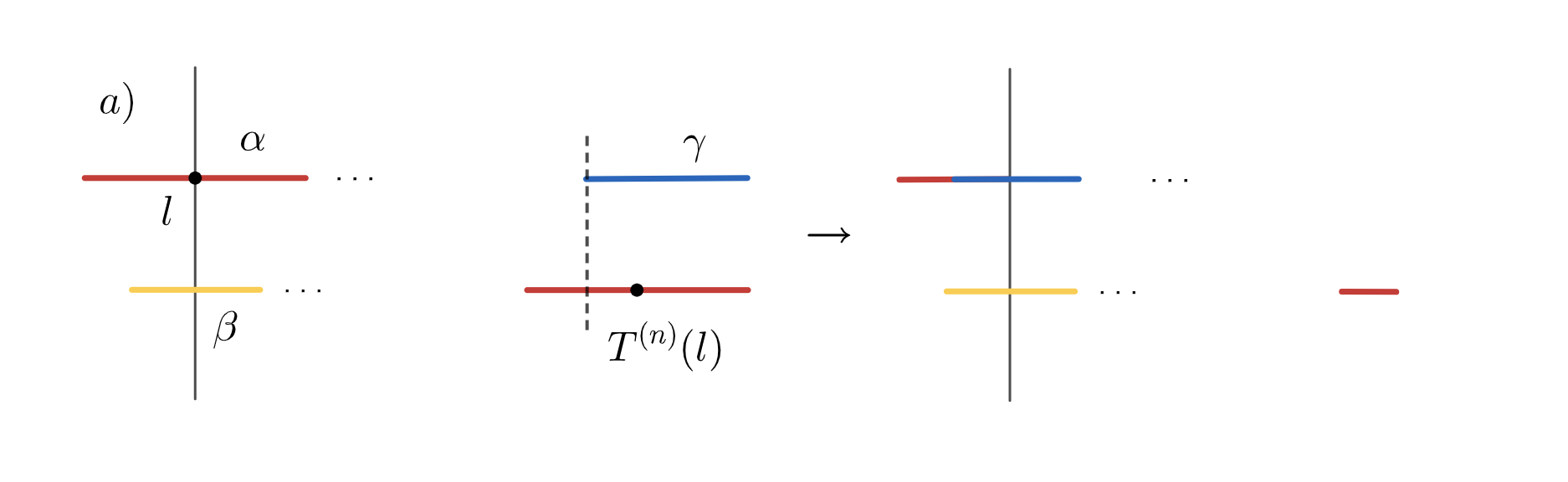}
\caption{\label{fig:9} Case a) $\alpha$ wins and $T^{(n)}(l) \in I_{\gamma}^{t,n}$.}
\end{figure}

We claim that case a) cannot arise: indeed, in case a) the letter $\alpha$ wins. But as depicted in Figure 10, after applying one more RV-induction step, $I_{\alpha}^{t,n+1} \cap [l,1) = \emptyset$ and hence by Lemma \ref{winninglettersintersectl} it follows that $\alpha$ always looses, which is a contradiction as $\alpha$ wins at least once, namely at the $n$-th induction step.

\begin{figure}[h]
\centering
\includegraphics[width=1.0\textwidth]{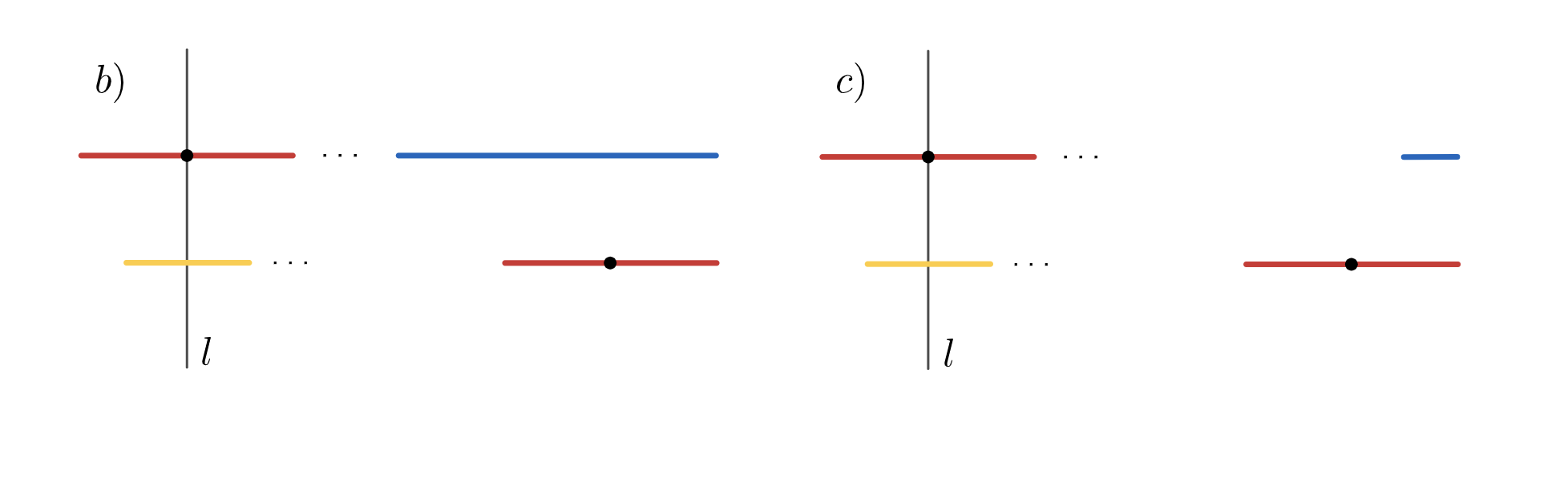}
\caption{\label{fig:10} Case b) $\gamma$ wins and case c) $\alpha$ wins and $T^{(n)}(l) \notin I_{\gamma}^{t,n}$}.
\end{figure}

Hence, only case b) and case c) may arise for Rauzy-Veech induction at the $n$-th induction step. But note that both in case b) and in case c) it holds that $l \in I_{\alpha}^{t,n+1}$, i.e the label of the interval whose closure contains $l$ does not change. Hence, by induction,  $l \in I_{\alpha}^{t,m}$ for all $m \in \mathbb{N}$. To show that $\alpha \in \mathcal{A}_{\infty}$, assume by contradiction that $\alpha$ always looses, i.e whenever $\alpha$ plays it is in case b). Then $I_{\alpha}^{t,n}= I_{\alpha}^{t}$ for all $n \in \mathbb{N}$, which is a contradiction as the renormalization limit  $l$ is smaller than the rightmost endpoint of $I_{\alpha}^{t}$. Thus, we have shown that $l \in I_{\alpha}^{t,m}$ for all $m \in \mathbb{N}$ and that $\alpha \in \mathcal{A}_{\infty}$.

The proof that $l \in I_{\beta}^{b,m}$ for all $m \in \mathbb{N}$ and that $\beta \in \mathcal{A}_{\infty}$ follows from the same argument applied to $T^{-1}$, hence the proposition follows.
\end{proof}

\subsection{Renormalization limit  and recurrence: the general case}\label{sec:renormandrecurrence}The goal of this section is to prove Proposition \ref{mainprop} below. Its statement can be summarized as follows: given a g-GIET $T$, if we consider the set of the leftmost endpoints of all recurrent orbit closures of $T$, then the renormalization limit  is equal to the maximum of that set. In Figure 8, a simple example with two periodic orbits is depicted.

\begin{prop} \label{mainprop}
     Let $T$ be an RV-stable g-GIET and let $l$ be its renormalization limit . Then there exists a recurrent orbit closure $\mathcal{W}$ whose leftmost endpoint is equal to $x$ such that $l = x$. Furthermore, for any other recurrent orbit closure $\mathcal{W}'$ with leftmost endpoint equal to $x'$ it holds that $x' < x$.
\end{prop}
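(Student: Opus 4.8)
The plan is to read off the location of $l$ from the combinatorics via Proposition \ref{intervalthatcontainslremainsunchanged}, and then translate this into a statement about recurrent orbits using the order information carried by the winning intervals (Lemma \ref{winninglettersintersectl}) together with Maier's theorem (Corollary \ref{cor:maierthm}). Proposition \ref{intervalthatcontainslremainsunchanged} furnishes labels $\alpha,\beta\in\mathcal{A}_\infty$ with $l\in I_\alpha^{t,n}\cap I_\beta^{b,n}$ for every $n$. Since $\alpha$ and $\beta$ win infinitely often they become rightmost infinitely often, so both $r^{t,n}$ and $r^{b,n}$ decrease to $l$; hence the induction domains $[0,\lambda^{(n)})$ satisfy $\lambda^{(n)}\downarrow l$, and $l<\lambda^{(n)}$ for all $n$. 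I would first dispose of the cylinder case $\alpha=\beta$: then eventually the two rightmost intervals of $T^{(n)}$ carry the same label, Lemma \ref{cylinderlemma} applies to $T^{(n)}$ and exhibits $l$ as a fixed point of $T^{(n)}$, i.e.\ a periodic point of $T$, whose periodic orbit is the desired recurrent orbit closure $\mathcal{W}$.

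For the existence statement in general I would show that $l$ is recurrent with $l=\min\mathcal{W}$. Because $l\in I_\alpha^{t,n}\subset[0,\lambda^{(n)})$, the return value $T^{(n)}(l)=T^{k_\alpha^n}(l)$ lies in $\overline{I_\alpha^{b,n}}\subset[0,\lambda^{(n)}]$, so $T^{k_\alpha^n}(l)\le\lambda^{(n)}\to l$. Moreover $I_\alpha^{b,n}$ is a bottom interval disjoint from $I_\beta^{b,n}\ni l$ which, by Lemma \ref{winninglettersintersectl}, meets $[l,1)$; the ordering of the bottom intervals then forces $I_\alpha^{b,n}\subset[l,1)$, whence $T^{k_\alpha^n}(l)\ge l$. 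Thus $T^{k_\alpha^n}(l)\to l$ from the right, so $l\in\omega(l)$, and the symmetric argument for $T^{-1}$ through the label $\beta$ gives $l\in\alpha(l)$; hence $l$ is recurrent. Taking $\mathcal{W}$ to be its recurrent orbit closure (a single quasiminimal when $l$ is non\nobreakdash-trivially recurrent, by Corollary \ref{cor:maierthm}, otherwise the periodic orbit), the returns approaching $l$ from above together with the localization of the detected intervals in $[l,1)$ give $\mathcal{W}\subset[l,1)$, so $l=\min\mathcal{W}$ is its leftmost endpoint.

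The maximality statement is where the main work lies. Let $\mathcal{W}'$ be a recurrent orbit closure with leftmost endpoint $x'$; I must rule out $x'>l$. Assuming $x'>l$, for $n$ large we have $\lambda^{(n)}<x'$, so $\mathcal{W}'\subset[x',1)$ is disjoint from the base $[0,\lambda^{(n)})$, and by invariance the entire orbit of any $p\in\mathcal{W}'$ avoids $[0,\lambda^{(n)})$. The plan is to derive a contradiction from the fact that a regular recurrent orbit is covered by the tower representation $\mathrm{Rep}(I^{t,n})$ of $T$ over $T^{(n)}$ (Definition \ref{def:towerrep}): a point lies in $\mathrm{Rep}(I^{t,n})$ precisely when some backward iterate lands in the base, so if $p$ is covered then $T^{-i}(p)\in\mathcal{W}'\cap[0,\lambda^{(n)})$ for some $i$, forcing $x'\le T^{-i}(p)<\lambda^{(n)}\to l$ and hence $x'\le l$, a contradiction. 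Strictness $x'<l$ for every closure other than $\mathcal{W}$ then follows, since $l$ is attained by $\mathcal{W}$ and two recurrent orbit closures sharing the recurrent point $l$ must coincide (Remark \ref{rem:maierthm}).

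I expect the covering step to be the principal obstacle: one must show that the complement of $\mathrm{Rep}(I^{t,n})$ consists only of non\nobreakdash-recurrent (transient or escaping) orbits, so that the induction cannot ``skip over'' a recurrent region lying to the right of $l$. The natural route is to follow the backward orbit of $p$ down through the tower floors, since $T^{-1}$ sends a point in a floor $T^{i}(\tilde I^t_\gamma)$ with $i\ge 1$ to the floor below; a backward orbit that never reaches a base would have to remain among floors of unbounded height, which the RV\nobreakdash-stable structure (Lemma \ref{lemma:RVstable}, controlling the finitely many towers) together with the recurrence of $p$ should forbid. Packaging this cleanly, and handling the endpoint subtleties in the existence step where $l$ may be a left endpoint of $I_\alpha^{t,n}$, is the technical heart of the argument.
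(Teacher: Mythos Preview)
Your existence argument is essentially the paper's own (Lemma~\ref{recurrencelemma}): locate $\alpha,\beta\in\mathcal A_\infty$ via Proposition~\ref{intervalthatcontainslremainsunchanged}, treat $\alpha=\beta$ by the cylinder lemma, and for $\alpha\neq\beta$ squeeze $T^{(n)}(l)\in I_\alpha^{b,n}\subset(l,r^{b,n})$ to get recurrence. That part is fine.

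The maximality step, however, has a genuine gap, and your proposed route through tower coverage is circular. You want to show that a recurrent point $p\in\mathcal W'$ lies in $\mathrm{Rep}(I^{t,n})$, i.e.\ that some backward iterate of $p$ lands in the base $[0,\lambda^{(n)})$. But your argument ``follow $T^{-1}(p)$ down through the floors'' presupposes that $p$ is already in some floor $T^i(\tilde I_\gamma^t)$; a point whose entire orbit stays in $[x',1)\subset(\lambda^{(n)},1)$ is simply not in any tower at all, so there are no floors to descend. The assertion ``a backward orbit that never reaches a base would have to remain among floors of unbounded height'' conflates the complement of the base with the union of floors, and for g-GIETs (even for GIETs) these are not the same. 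Thus the covering claim is equivalent to $x'\le\lambda^{(n)}$, which is exactly what you are trying to prove.

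The paper's argument (Lemma~\ref{leftendpointlemma}) avoids towers entirely and works step by step on the induction. Suppose for contradiction that $r^{t,n}\le x'$ for some minimal $n$, so $r^{t,n-1}>x'$. Since $x'$ is the leftmost point of a recurrent orbit closure, the first return of $T$ to $[0,r^{t,n-1})$ is defined in a neighbourhood of $x'$, hence $x'$ lies in both the rightmost top and bottom intervals of $T^{(n-1)}$ and they share right endpoint $r^{t,n-1}=r^{b,n-1}$. If their labels coincide, $x'$ is a fixed point of $T^{(n-1)}$ and Lemma~\ref{cylinderlemma} forces $r^{t,m}>x'$ for all $m$, a contradiction. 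If the labels differ, then the induction step gives $T^{(n-1)}(x')<l^{b,n-1}<x'$, contradicting that $x'$ is the leftmost point of its orbit closure. This yields $l\ge x'$ directly. Strictness is then handled as you propose, via Maier's theorem.
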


\begin{figure}[h]
\centering
\includegraphics[width=0.9\textwidth]{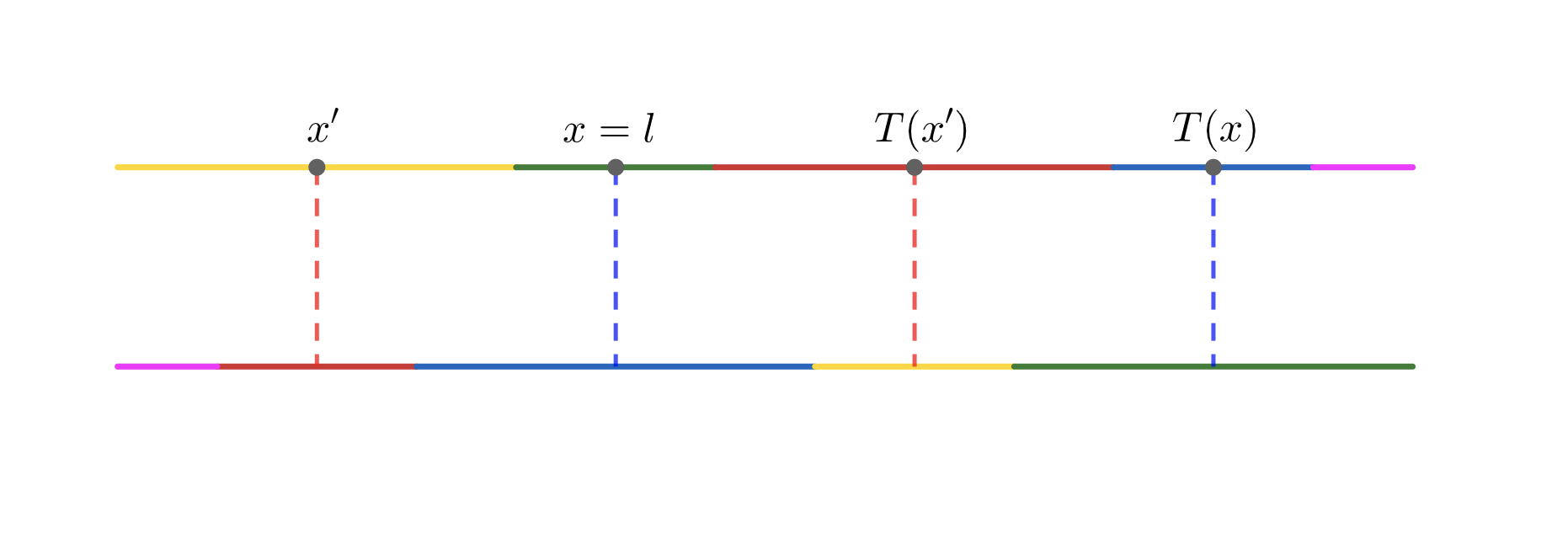}
\caption{A g-GIET with two periodic orbits $\{x,T(x)\}$ in blue and $\{x',T(x')\}$ in red. The renormalization limit  $l$ is equal to $x$, as $x > x'$.}
\end{figure}

We split the proof of this proposition into the following two lemmas:

\begin{lem} \label{recurrencelemma}  Let $T: I^t \to I^b$ be RV-stable with renormalization limit  $l$. Then $l$ is the leftmost endpoint of the closure of a recurrent orbit.
\end{lem}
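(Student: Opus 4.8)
The plan is to show that the renormalization limit $l$ is itself a recurrent point whose whole orbit lies in $[l,1)$, so that $l$ is the leftmost point of $\overline{\mathcal{O}(l)}$. The starting point is Proposition~\ref{intervalthatcontainslremainsunchanged}, which produces labels $\alpha,\beta\in\mathcal{A}_{\infty}$ with $l\in I_{\alpha}^{t,n}\cap I_{\beta}^{b,n}$ for every $n$. Recall that $T^{(n)}$ is the first return map of $T$ to the induction interval $[0,\lambda^{(n)})$; since $T$ is RV-stable, no gap ever wins (Lemma~\ref{lemma:RVstable}), the rightmost top and bottom endpoints $r^{t,n}$ and $r^{b,n}$ both decrease to $l$, and hence $\lambda^{(n)}\downarrow l$. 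In particular $l<\lambda^{(n)}$ for all $n$, and $l$ lies in the domain of $T^{(n)}$ and of $(T^{(n)})^{-1}$. I would first dispose of the degenerate case $\alpha=\beta$: as noted in the proof of Proposition~\ref{intervalthatcontainslremainsunchanged}, this forces the rightmost top and bottom labels to coincide with $r^{t}\neq r^{b}$, so Lemma~\ref{cylinderlemma} applies to a suitable iterate $T^{(n)}$ and yields $T^{(n)}(l)=l$, i.e. $l$ is periodic for $T$. The orbit of $l$ then visits $[0,\lambda^{(n)})$ only at $l$, so all its other points lie outside $[0,\lambda^{(n)})$, hence strictly to the right of $l$, and $l$ is the leftmost endpoint of this (trivially recurrent) periodic orbit.

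In the main case $\alpha\neq\beta$, fix $n$ and let $k_{n}$ be the return time of $l$, so that $T^{(n)}(l)=T^{k_{n}}(l)$. The intermediate iterates $T^{j}(l)$ for $1\leq j<k_{n}$ lie outside $[0,\lambda^{(n)})$, hence $T^{j}(l)\geq\lambda^{(n)}>l$; and $T^{k_{n}}(l)\in I_{\alpha}^{b,n}\subset(l,1)$ by the proof of Proposition~\ref{intervalthatcontainslremainsunchanged}, so $T^{k_{n}}(l)>l$, while $T^{k_{n}}(l)<\lambda^{(n)}$. As $\lambda^{(n)}\downarrow l$ this gives
\[
l<T^{(n)}(l)<\lambda^{(n)}\longrightarrow l,
\]
so $T^{(n)}(l)\to l$ from the right. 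Consequently $k_{n}\to\infty$: if $k_{n}$ stayed bounded, a constant value $k$ would occur along a subsequence, forcing $T^{k}(l)=\lim_n T^{(n)}(l)=l$ and contradicting $T^{(n)}(l)>l$. Since $k_{n}\to\infty$ and for each $n$ all iterates up to $T^{k_{n}}(l)$ are defined, the forward orbit $\mathcal{O}^{+}(l)$ is infinite and regular, every forward iterate satisfies $T^{j}(l)>l$, and $T^{k_{n}}(l)\to l$ shows $l\in\omega(l)$; hence $l$ is $\omega$-recurrent.

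The $\alpha$-recurrence of $l$, together with $T^{-m}(l)>l$ for all $m\geq 1$, follows from the same argument applied to $\beta$ and to $T^{-1}$: the version of Proposition~\ref{intervalthatcontainslremainsunchanged} for $T^{-1}$ gives $I_{\beta}^{t,n}\subset(l,1)$, whence $l<(T^{(n)})^{-1}(l)<\lambda^{(n)}\to l$, so $\mathcal{O}^{-}(l)$ is infinite, regular, and accumulates at $l$ from the right. Thus $l$ is recurrent with $\mathcal{O}(l)\subset[l,1)$ and $l\in\mathcal{O}(l)$, so the recurrent orbit closure $\overline{\mathcal{O}(l)}$ has leftmost endpoint exactly $l$, as required.

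The main obstacle I expect is the bookkeeping needed to make the reduction to $T^{-1}$ rigorous: justifying cleanly that $\lambda^{(n)}\downarrow l$ (equivalently that $r^{b,n}\to l$, the symmetric counterpart of the renormalization limit at the bottom) and that running the proof of Proposition~\ref{intervalthatcontainslremainsunchanged} on $T^{-1}$ genuinely delivers $I_{\beta}^{t,n}\subset(l,1)$, since RV-induction for $T^{-1}$ is not literally the time-reversal of that for $T$. A secondary point requiring care is that the finitely many iterates between consecutive returns are exactly the floors of the towers over $T^{(n)}$ and are therefore regular, which is what guarantees that $\mathcal{O}^{+}(l)$ and $\mathcal{O}^{-}(l)$ never escape into a gap.
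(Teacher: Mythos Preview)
Your proposal is correct and follows essentially the same approach as the paper's proof: invoke Proposition~\ref{intervalthatcontainslremainsunchanged} to get $\alpha,\beta\in\mathcal{A}_\infty$, dispose of $\alpha=\beta$ via Lemma~\ref{cylinderlemma}, and in the case $\alpha\neq\beta$ use that $I_{\alpha}^{b,n}\subset(l,r^{b,n})$ (respectively $I_{\beta}^{t,n}\subset(l,r^{t,n})$) together with $r^{t,n},r^{b,n}\to l$ to obtain $\omega$- and $\alpha$-recurrence of $l$ with orbit contained in $[l,1)$. Your write-up is in fact slightly more explicit than the paper's on the intermediate points (showing $T^j(l)>l$ via the first-return property and arguing $k_n\to\infty$), and the concerns you flag at the end---that $r^{b,n}\to l$ and the symmetry argument for $T^{-1}$---are handled in the paper simply by asserting them, so you are being appropriately careful rather than missing anything.
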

\begin{proof} 
By Lemma \ref{intervalthatcontainslremainsunchanged} there exist $\alpha$, $\beta \in \mathcal{A}_{\infty}$ such that $l \in I_{\alpha}^{t,n} \cap I_{\beta}^{b,n}$ for all $n \in \mathbb{N}$. If $\alpha = \beta$, it follows from Lemma \ref{cylinderlemma} that $l$ is a fixed point and hence the leftmost endpoint of the closure of a recurrent orbit.

If $\alpha \neq \beta$, then since Lemma \ref{winninglettersintersectl} together with Lemma \ref{intervalthatcontainslremainsunchanged} imply that $I_{\alpha}^{b,n} \, \cap \, [l,1) \neq \emptyset$ for all $n \in \mathbb{N}$, it follows that $I_{\alpha}^{b,n} \subset (l,r^{b,n})$ for all $n \in \mathbb{N}$. But as $l = \lim_{n\to\infty} \hspace{1mm}r^{t,n} = \lim_{n\to\infty} \hspace{1mm}r^{b,n}$, it follows that for all $\epsilon > 0$ there exists $n \in \mathbb{N}$ such that $|l - T^{(n)}(l)| < \epsilon$. But then the forward orbit through $l$ is non-trivially $\omega$- recurrent for $T$. Similarly,  $I_{\beta}^{t,n} \subset (l,r^{t,n})$ for all $n \in \mathbb{N}$ and hence the backward orbit through $l$ is non-trivially backward recurrent, in particular, the full orbit through $l$ is recurrent and its closure is a quasiminimal. The fact that $l$ is the leftmost endpoint of the quasiminimal follows from the fact that $I_{\alpha}^{b,n}, I_{\beta}^{t,n} \not \subset [0,l)$ for all $n \in \mathbb{N}$.



\end{proof}

The second lemma needed in order to complete the proof of Proposition \ref{mainprop} is the following:

\begin{lem} \label{leftendpointlemma} Let $T:I^t \to I^b$ be a g-GIET and let $\mathcal{W}$ be a recurrent orbit closure of $T$ with leftmost endpoint equal to $x$. Then $T$ is infinitely renormalizable and $l \geq x$.
\end{lem}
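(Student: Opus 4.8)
The plan is to prove both conclusions at once by showing, inductively on the step $n$, that the induced map $T^{(n)}$ (as long as it is defined) again possesses a recurrent orbit closure $\mathcal{W}^{(n)}$ whose leftmost endpoint is exactly $x$. Everything follows from this single invariant. Indeed, the recurrent points of $T^{(n)}$ lie in its top intervals $I^{t,n}\subseteq[0,r^{t,n})$, and since $x$ is the leftmost point of $\mathcal{W}^{(n)}$ there are such points accumulating to $x$ from the right; hence $x<r^{t,n}$ for every $n$, and passing to the limit gives $l=\lim_{n\to\infty}r^{t,n}\ge x$. Moreover $r^{t,n}>x\ge 0$ forces the domain of $T^{(n)}$ to be nonempty, so $d_n\ge 1$ and the induction can never reach $d=0$, i.e.\ $T$ is infinitely renormalizable.

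For the base case, $T^{(0)}=T$ carries $\mathcal{W}$ itself, with leftmost point $x$. For the inductive step I would use that $T^{(n+1)}$ is the first-return map of $T^{(n)}$ to $[0,\lambda^{(n+1)})$. Provided $\lambda^{(n+1)}>x$, the recurrent orbit underlying $\mathcal{W}^{(n)}$ meets $[0,\lambda^{(n+1)})$ in the points accumulating at $x$; since passing to a first-return map preserves $\omega$- and $\alpha$-recurrence (each sufficiently close return to a point $p\in[0,\lambda^{(n+1)})$ is itself a return under the induced map), these points form a recurrent orbit of $T^{(n+1)}$ whose closure still has leftmost endpoint $x$. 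Thus the whole argument reduces to the single geometric claim that the elementary step never cuts below $x$:
\[
\lambda^{(n+1)}>x \qquad \text{for every } n .
\]

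Establishing this claim is the heart of the proof, and where I expect the main difficulty. Since $\lambda^{(n+1)}$ equals either $\max(l^{t,n},l^{b,n})$ or $\min(r^{t,n},r^{b,n})$, I would argue by inspecting the two rightmost intervals of $T^{(n)}$ and combining two facts: recurrent orbits are \emph{regular}, hence never fall into a gap, and $\mathcal{W}^{(n)}$ has points arbitrarily close to $x$ from the right. Regularity excludes the configurations in which a gap wins (Case 2b): a gap winning would cut off material lying to the right of $\lambda^{(n+1)}$, but the recurrent points near $x$ persist on both sides as interval points and cannot be absorbed into a gap, forcing an interval (not a gap) to win and keeping $\lambda^{(n+1)}$ to the right of $x$. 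When the two rightmost labels coincide (the cylinder case), I would invoke Lemma \ref{cylinderlemma}: if the recurrence near the right end comes from a fixed point, then $\lambda^{(n+1)}$ decreases towards that fixed point, which is $\ge x$, and the induction proceeds in Case 2a without ever terminating; and a non-trivially recurrent $\mathcal{W}^{(n)}$ cannot produce a terminal single self-diffeomorphism at all, because a one-interval self-map supports only fixed points as recurrent orbits.

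Finally, combining the persistence invariant with the observation that recurrent points sit inside the top intervals yields $x<r^{t,n}$ at every step, hence $l\ge x$; and the impossibility of the two terminal configurations (gap-wins excluded by regularity, and the single self-diffeomorphism of Case 1b excluded by the presence of genuine recurrence strictly to the right of $x$, handled through Lemma \ref{cylinderlemma}) yields infinite renormalizability. The delicate bookkeeping throughout is precisely the verification that the recurrent orbit closure, and in particular its leftmost endpoint $x$, is never destroyed by an elementary step; once $\lambda^{(n+1)}>x$ is secured, preservation of $\mathcal{W}^{(n)}$ is a routine consequence of the definition of the first-return map.
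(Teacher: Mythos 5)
Your overall strategy coincides with the paper's: reduce both conclusions to the single claim that the elementary step never cuts at or below $x$ (equivalently $r^{t,n}>x$ for all $n$), using that the recurrent orbit underlying $\mathcal{W}$ is regular, survives passage to first-return maps, and accumulates at $x$ from the right. Your handling of the gap-related cases and of the cylinder (equal rightmost labels) case via Lemma \ref{cylinderlemma} also matches the paper in spirit.

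There is, however, a genuine gap: you never treat the case in which the two rightmost intervals of $T^{(n)}$ play against each other (Case 1, $r^{t,n}=r^{b,n}$) and carry \emph{distinct} labels. This is the generic Rauzy--Veech step, and it is exactly the case in which the removed region $[\max(l^{t,n},l^{b,n}),\,r^{t,n})$ is \emph{not} contained in a gap and may genuinely contain recurrent points, so your regularity argument says nothing; and since the labels differ there is no cylinder, so Lemma \ref{cylinderlemma} does not apply either. If in this configuration one had $\max(l^{t,n},l^{b,n})\le x$, then both rightmost intervals would contain $x$, and nothing in your proposal rules this out. The paper closes this case with a separate order argument: the rightmost top interval $I^{t,n}_{\alpha_t}$ contains $x$, but its image $I^{b,n}_{\alpha_t}$ is not the rightmost bottom interval and hence lies entirely to the left of $l^{b,n}\le x$; therefore $T^{(n)}$ sends points of $\mathcal{W}$ arbitrarily close to $x$ (or $x$ itself) strictly below $x$, contradicting that $x$ is the leftmost point of the invariant set $\mathcal{W}$. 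Without this step (or an equivalent one) the proof is incomplete. A smaller imprecision: Case 2b is not ``excluded'' by regularity --- a gap may perfectly well win during the induction; the correct statement is that whenever a gap is involved the removed region is contained in a gap of $T^{(n)}$ and therefore lies strictly to the right of every recurrent point, so such a step automatically cuts above $x$.
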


\begin{proof} It is enough to show that $r^{t,n} > x$ for all $n \in \mathbb{N}$, as then $l = \lim_{n\to\infty} \hspace{1mm}r^{t,n} \geq x$ exists and hence $T$ is infinitely renormalizable.

We first show that there exists no $n \in \mathbb{N}$ such that $0 < r^{t,n} \leq x$. Assume by contradiction that such an $n$ exists and assume further that $n$ is the smallest integer for which $r^{t,n} \leq x$, i.e $r^{t,n-1} > x$. Note that as $x$ is the leftmost endpoint of a recurrent orbit closure, the first return map in a neighbourhood around $x$ is always defined and $x$ is never contained in a gap. Therefore, at the $n$-th induction step, two intervals are playing against each other, namely the rightmost top and bottom intervals $[l^{t,n-1}, r^{t,n-1})$ and $[l^{b,n-1}, r^{b,n-1})$ of $T^{(n-1)}$ which both contain $x$ and have the same rightmost endpoint $r^{t,n-1} = r^{b,n-1}$.

We consider two cases: if the label of the top and bottom rightmost interval of $T^{(n-1)}$ are the same, then $x$ is necessarily a fixed point for $T^{(n-1)}$ and by Lemma \ref{cylinderlemma} $r^{t,m} > x$ for all $m \in \mathbb{N}$ which is a contradiction to the assumption that $r^{t,n} \leq x$.

If the top and bottom rightmost label of $T^{(n-1)}$ are different, then $T^{(n-1)}(x) < l^{b,n-1} < x$ which is a contradiction to $x$ being the left-most point of the closure of the recurrent orbit. Hence we showed that there exists no $n \in \mathbb{N}$ such that $0 < r^{t,n} \leq x$.

We now show that there exists no $n \in \mathbb{N}$ such that $r^{t,n} = 0$. By contradiction, let $n \in \mathbb{N}$ be such that $d_n = 0$ and $d_{n-1} = 1$. As by the first part, $r^{t,n-1} > x$, it follows that $T^{(n-1)}$ is the first return map to some interval which contains an open neighbourhood around $x$, in particular, $x$ is contained both in a top and in a bottom interval of $T^{(n-1)}$. But as $d_{n-1} = 1$, the label of the top and bottom interval must be the same and contain a fixed point $x$, hence by Lemma \ref{cylinderlemma} the g-GIET is RV-stable, in particular, $d_n$ is never equal to zero.
\end{proof}

The two lemmas \ref{recurrencelemma} and \ref{leftendpointlemma} directly imply Proposition \ref{mainprop}:

\begin{proof}[Proof (of prop \ref{mainprop})] By Lemma \ref{recurrencelemma}, if $T:I^t \to I^b$ is an RV-stable g-GIET with renormalization limit $l$, then there exists a recurrent orbit closure $\mathcal{W}$ with leftmost endpoint $x$ such that $l=x$, and by Lemma \ref{leftendpointlemma} the leftmost endpoint of any other recurrent orbit closure is smaller or equal to $x$. To conclude the proof, we have to show that this inequality is strict, in particular, we claim that two distinct recurrent orbit closures $\mathcal{W}_1$ and $\mathcal{W}_2$ cannot have the same left-most endpoint. Indeed, note that by the proof of Lemma \ref{recurrencelemma} it follows that the left-most endpoint of a quasiminimal belongs to a non-trivially recurrent orbit. In particular, if either $\mathcal{W}_1$ or $\mathcal{W}_2$ is a closed orbit, then the claim is immediate. If both $\mathcal{W}_1$ and $\mathcal{W}_2$ are quasiminimals with the same left-most endpoint $x$, then the non-trivially recurrent trajectory through $x$ is contained both in $\mathcal{W}_1$ and in $\mathcal{W}_2$. However, we have seen in Section \S~\ref{sec:maierthm} that two quasiminimals which contain the same non-trivially recurrent orbit must be equal.
\end{proof}

\subsection{Renormalization limit  and winning letters} We have seen that Rauzy-Veech induction is able to detect the "rightmost" recurrent orbit of a g-GIET $T$. We would now like to determine whether this recurrent orbit is trivially or non-trivially recurrent. The proposition below provides a way of doing so using the set of letters that win infinitely often.

\begin{prop} \label{winningletterstrichotomy}Let $T:I^t \to I^b$ be an infinitely renormalizable g-GIET with renormalization limit  $l$. Let $\mathcal{A}_{\infty} \subset \mathcal{A}$ denote the set of letters that win infinitely often. We have the following trichotomy:

\begin{enumerate}
    \item $\mathcal{A}_{\infty} = \emptyset$ if and only if $I^t$ is a domain of transition,
    \item $|\mathcal{A}_{\infty}| =1$ if and only if $l$ is the leftmost endpoint of a periodic orbit,
    \item $|\mathcal{A}_{\infty}| > 1$ if and only if $l$ is the leftmost endpoint of a quasiminimal.
\end{enumerate}
\end{prop}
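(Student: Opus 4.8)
The plan is to reduce the whole statement to the dichotomy already extracted in the proof of Lemma~\ref{recurrencelemma}. First I would pass to an RV-stable iterate $T^{(N)}$ (Remark~\ref{rk:RVstable}); since the renormalization limit $l$, the set $\mathcal{A}_\infty$ and the dynamical nature of the orbit through $l$ all depend only on the tail of the induction, nothing is lost. By Lemma~\ref{lemma:RVstable} an RV-stable g-GIET has an interval winning at \emph{every} step, so by the pigeonhole principle $\mathcal{A}_\infty\neq\emptyset$ whenever $T$ is infinitely renormalizable. Hence case (1) is precisely the situation in which the induction terminates: I would show that $\mathcal{A}_\infty=\emptyset$ forces the induction to stop and reach $d=0$, meaning that no point returns to the final interval $[0,\lambda^{(n)})$, so every orbit escapes through a gap in both time directions and $I^t$ is a domain of transition. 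Conversely, a domain of transition has no recurrent orbit closure, so by Proposition~\ref{mainprop} it cannot be RV-stable, the induction must terminate, and $\mathcal{A}_\infty=\emptyset$. This settles (1).

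For (2) and (3) I would invoke Proposition~\ref{intervalthatcontainslremainsunchanged} to produce two letters $\alpha,\beta\in\mathcal{A}_\infty$ with $l\in I_\alpha^{t,n}\cap I_\beta^{b,n}$ for all $n$, and recall from the proof of Lemma~\ref{recurrencelemma} the governing dichotomy: if $\alpha=\beta$ then $l$ is a fixed point of an acceleration of $T$, hence (via the tower structure) the leftmost endpoint of a periodic orbit of $T$, while if $\alpha\neq\beta$ then $l$ is the leftmost endpoint of a quasiminimal. By Proposition~\ref{mainprop} the recurrent orbit closure detected at $l$ is unique, so these two alternatives are mutually exclusive. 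The easy implications are then immediate: since $\alpha,\beta\in\mathcal{A}_\infty$, having $|\mathcal{A}_\infty|=1$ forces $\alpha=\beta$ and hence a periodic orbit, and a quasiminimal at $l$ forces $\alpha\neq\beta$ and hence $|\mathcal{A}_\infty|\geq 2$.

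The crux, which I expect to be the main obstacle, is the remaining implication: that the cylinder case $\alpha=\beta$ forces $|\mathcal{A}_\infty|=1$ (equivalently, that a periodic orbit at $l$ excludes $|\mathcal{A}_\infty|\geq 2$). Here I would show that the induction eventually enters a \emph{pure cylinder} regime in which only the label $\alpha$ plays. Concretely, by Lemma~\ref{cylinderlemma} (say in the subcase $r^t>r^b$) the fixed point is $x_0=l$ and $r^{t,n}=T^{n}(r^t)\downarrow x_0$, so the collar $(l,r^{t,n})$ to the right of $l$ shrinks to nothing. Any $\gamma\neq\alpha$ in $\mathcal{A}_\infty$ would, by Lemma~\ref{winninglettersintersectl}, have $I_\gamma^{t,n}$ and $I_\gamma^{b,n}$ meeting $[l,1)$ for every $n$; as $l\in I_\alpha^{t,n}\cap I_\alpha^{b,n}$, these intervals must sit inside the shrinking collars $(l,r^{t,n})$ and $(l,r^{b,n})$. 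Using the monotone convergence of the rightmost endpoints toward $x_0$, I would argue that such a collar is transient, with its points descending monotonically to $x_0$, so that its label is absorbed and stops playing after finitely many steps and cannot win infinitely often, contradicting $\gamma\in\mathcal{A}_\infty$. This yields $\mathcal{A}_\infty=\{\alpha\}$.

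Finally I would assemble the equivalences. The three left-hand conditions $|\mathcal{A}_\infty|=0,\,=1,\,>1$ partition all possibilities, while the three right-hand conditions (domain of transition, periodic orbit at $l$, quasiminimal at $l$) are mutually exclusive and, by Lemma~\ref{recurrencelemma} together with (1), exhaustive. Having established $0\Leftrightarrow\text{transition}$, $\,=1\Rightarrow\text{periodic}$, $\,\text{periodic}\Rightarrow=1$ (the cylinder claim), and $\,\text{quasiminimal}\Rightarrow>1$, the remaining correspondence $>1\Leftrightarrow\text{quasiminimal}$ is then forced, since two of the three classes of a pair of exhaustive three-way classifications already match bijectively. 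This gives the asserted trichotomy.
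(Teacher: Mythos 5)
Your proposal is correct and follows essentially the same route as the paper: part (1) via Lemma~\ref{recurrencelemma} and Lemma~\ref{leftendpointlemma} (equivalently, RV-stability forcing a winner at every step), part (2) via the cylinder dichotomy $\alpha=\beta$ versus $\alpha\neq\beta$ coming from Proposition~\ref{intervalthatcontainslremainsunchanged} and Lemmas~\ref{cylinderlemma} and~\ref{winninglettersintersectl}, and part (3) by exclusion using Proposition~\ref{mainprop}. Your identification of ``periodic at $l$ implies $|\mathcal{A}_\infty|=1$'' as the crux, and your squeezing of any other infinitely-winning letter into the shrinking collar $(l,r^{t,n})$, is exactly the content the paper compresses into ``$x_0$ is contained in a cylinder and hence there is only one letter winning infinitely often''; your version actually supplies more detail than the published argument at that point.
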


\begin{proof} Lemma \ref{recurrencelemma} together with Lemma \ref{leftendpointlemma} implies that $\mathcal{A}_{\infty} \neq \emptyset$ if and only if $I^t$ is not a domain of transition, which is exactly the statement of (1).

For (2), if $l=x_0$ where $x_0$ is the leftmost endpoint of a periodic orbit, then once $n$ is large enough so that $r^{t,n} < $ min $\{T^l(x_0), 0 < l \leq p\}$, then $x_0$ is contained in a cylinder and hence there is only one letter winning infinitely often.

Conversely, let $w$ be the unique letter that wins infinitely often. Let $n \in \mathbb{N}$ be such that $T^{(n)}$ is RV-stable. As by Lemma \ref{winninglettersintersectl} the labels of the intervals which contain $l$ have to win infinitely often, it follows that $l \in I_w^{t,n} \cap I_w^{b,n}$. But then $l$ is a fixed point for $T^{(n)}$ and hence is the leftmost endpoint of a periodic orbit of $T$.

The last statement (3) follows directly from (2) and the fact that by Lemma \ref{recurrencelemma} $l$ must be the leftmost endpoint of either a periodic orbit or a quasiminimal.
\end{proof}

\section{Decomposition into domains of recurrence} \label{sec:decompositionproof}

\subsection{Finding a domain of recurrence}  We have seen that if one applies Rauzy-Veech induction to a g-GIET $T:I^t \to I^b$ infinitely many times, the induction accumulates at the leftmost endpoint of a recurrent orbit closure $\mathcal{W}$ of $T$. We now want to determine a subset of $I^t$ that contains $\mathcal{W}$ and at the same time is a domain of recurrence for $T$. In particular, we want to prove Proposition \ref{findingdomrec} below. For the definition of the restriction of a g-GIET we refer the reader to Section \S~\ref{sec:domains}.

\begin{prop} \label{findingdomrec} Let $T: I^t \to I^b$ be RV-stable with renormalization limit  $l$ and corresponding recurrent orbit closure $\mathcal{W}$. Let $\mathcal{A}_{\infty}$ be the set of letters that win infinitely often and define $I_{\infty}^t := \bigcup_{\alpha \in \mathcal{A}_{\infty}} I_{\alpha}^t$. Then $I_{\infty}$ is a domain of recurrence for the restriction $T|_{I_{\infty}^t}: I_{\infty}^t \to T(I_{\infty}^t)$. Furthermore,

\begin{itemize}
    \item if $\mathcal{W}$ is a fixed point, then $T|_{I_{\infty}^t}$ contains no recurrent orbits other than fixed points.
    \item if $\mathcal{W}$ is a quasiminimal, then $T|_{I_{\infty}^t}$ contains a unique quasiminimal and all recurrent orbits are non-trivially recurrent.
\end{itemize}
\end{prop}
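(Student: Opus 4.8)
The plan is to split the argument according to the trichotomy of Proposition~\ref{winningletterstrichotomy}, which already tells us that $\mathcal{W}$ is (the closure of) a periodic orbit precisely when $|\mathcal{A}_\infty|=1$ and a quasiminimal precisely when $|\mathcal{A}_\infty|>1$, so that the two bullet points correspond exactly to these two cases. In both cases I would reduce the dynamics of $T|_{I_\infty^t}$ to that of the \emph{accelerated} map $T^{(n)}$ for $n$ large enough that $T^{(n)}$ is RV-stable (Remark~\ref{rk:RVstable}). The guiding principle is that $I_\infty^t$ is the \emph{recurrent core}: by Lemma~\ref{winninglettersintersectl} and Proposition~\ref{intervalthatcontainslremainsunchanged}, the top and bottom intervals carrying labels in $\mathcal{A}_\infty$ always meet $[l,1)$ and keep their labels, whereas the intervals of the remaining labels are eventually evacuated from a right neighbourhood of $l$; consequently the first return map $T_{I_\infty^t}$ of $T$ to $I_\infty^t$ is an acceleration of $T$ whose recurrent behaviour is governed by exactly the letters of $\mathcal{A}_\infty$. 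Recall from the definition in \S~\ref{sec:domains} that proving $I_\infty^t$ is a domain of recurrence amounts to proving that $T_{I_\infty^t}$ has no transient orbit, i.e.\ no orbit escaping through a gap both in forward and in backward time; the two displayed structural assertions will then follow from the same analysis.

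In the periodic case $\mathcal{A}_\infty=\{w\}$ we have $I_\infty^t=I_w^t$, a single interval. Here the rightmost top and bottom intervals of $T^{(n)}$ eventually share the label $w$ and form a cylinder, so by Lemma~\ref{cylinderlemma} the renormalization limit $l$ is a fixed point of the single-branch, orientation-preserving accelerated map on that cylinder. Being a monotone one-dimensional map with fixed point $l$, this map has every orbit moving monotonically, and hence escaping in \emph{at most one} of the two time directions; this simultaneously rules out transient orbits and shows that the only recurrent orbits are fixed points. I would then transfer this to $T_{I_\infty^t}$ by observing that the cylinder map is a further acceleration of $T_{I_\infty^t}$, so an orbit of $T_{I_\infty^t}$ escaping $I_\infty^t$ in both directions would, after passing to the cylinder, contradict monotonicity.

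In the quasiminimal case $|\mathcal{A}_\infty|>1$ the accelerated map $T^{(n)}$ restricted to the intervals with labels in $\mathcal{A}_\infty$ is an RV-stable g-GIET on $|\mathcal{A}_\infty|\ge 2$ intervals in which every letter wins infinitely often, hence is $\infty$-complete in the sense of Definition~\ref{def:infcomplete}. By the Poincaré--Yoccoz Theorem~\ref{thm:PoincareYoccoz} it is semi-conjugate to a minimal IET $T_0$ on $|\mathcal{A}_\infty|$ intervals. Minimality of $T_0$ then yields all three conclusions: no transient orbits (an orbit that enters the region is trapped by the recurrent structure, exactly as in the attracting quasiminimal of \S~\ref{sec:ex_quasiminimal}); no periodic orbits, so that every recurrent orbit is non-trivially recurrent; and a unique quasiminimal, namely the closure of $\mathcal{W}$. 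For the uniqueness I would additionally invoke Maier's theorem for g-GIETs (Corollary~\ref{cor:maierthm}): since two quasiminimals sharing a non-trivially recurrent orbit must coincide, and every recurrent orbit of $T|_{I_\infty^t}$ is non-trivial and lands in the quasiminimal, there can be only one.

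The hard part will be the bookkeeping that rigorously identifies the recurrent core of $T_{I_\infty^t}$ with the RV-accelerated map carried by $\mathcal{A}_\infty$, and in particular the claim that no orbit of $T_{I_\infty^t}$ leaves $I_\infty^t$ through a gap in both time directions. Establishing this uniformly on all of $I_\infty^t$, and not merely near $l$, is the crux: one must show that the losing and non-playing labels are evacuated from a right neighbourhood of $l$ so that escape can occur on only one side, and one must match $T_{I_\infty^t}$ with the accelerated map so that either the cylinder structure (periodic case) or the $\infty$-completeness (quasiminimal case) controls the \emph{entire} region. The tower representation of \S~\ref{sec:towers} (Definition~\ref{def:towerrep}) is the natural language for this identification, and pinning down the one-sided nature of escapes --- via monotonicity in the periodic case and via the semi-conjugacy/trapping mechanism in the quasiminimal case --- is where the argument is most delicate.
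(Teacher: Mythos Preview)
Your periodic case is fine and in fact cleaner than the paper's unified treatment: once $|\mathcal{A}_\infty|=1$ the restriction is a single orientation-preserving branch with fixed point $l$, and the monotonicity argument really does forbid escape in \emph{both} time directions (the increasing/decreasing orbit would have to land simultaneously in $I_w^b\setminus I_w^t$ and $I_w^t\setminus I_w^b$ on the same side of $l$, which is impossible).

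The quasiminimal case, however, has a genuine gap. The semi-conjugacy $h$ furnished by Theorem~\ref{thm:PoincareYoccoz} does \emph{not} rule out transient orbits. If $\{T^{-m}(x),\dots,T^{n}(x)\}$ is a transient orbit of $T|_{I_\infty^t}$, then $h$ is only defined on the points that lie in $I_\infty^t$, and the relation $h\circ T=T_0\circ h$ only applies where \emph{both} $y$ and $T(y)$ are in $I_\infty^t$. All you obtain is a finite orbit \emph{segment} of the minimal IET $T_0$, which is no contradiction whatsoever. The ``trapping'' intuition from \S~\ref{sec:ex_quasiminimal} is specific to \emph{attracting} quasiminimals and fails for repelling or mixed ones. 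Similarly, your uniqueness argument via Corollary~\ref{cor:maierthm} is circular as stated: Maier tells you two quasiminimals sharing a non-trivially recurrent orbit coincide, but you have not produced a shared orbit.

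The paper avoids Poincar\'e--Yoccoz altogether here. It first shows (easy) that the restriction $T|_{I_\infty^t}$ is again RV-stable. Then it argues by contradiction: a transient orbit lying entirely in $[l,1)$ has a minimum $x_0$; choosing $n$ with $x_0\le r^{t,n}< x_1$ (the second-smallest point), $x_0$ becomes a point with a top \emph{and} a bottom gap for $T^{(n)}$, and one of these gaps must eventually \emph{win}, contradicting RV-stability. For orbits dipping into $[0,l)$ the paper introduces a fake singularity at $l$ (Lemma~\ref{alwaysloose}) and shows the piece $I_{\alpha_1}^t=I_\alpha^t\cap[0,l)$ always loses, hence is a wandering interval whose forward iterates stay in $[l,1)\cap I_\infty^t$; such points cannot be transient. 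Uniqueness of the quasiminimal then follows because any second quasiminimal would have leftmost point $<l$ (Proposition~\ref{mainprop}), forcing it to meet the wandering interval $I_{\alpha_1}^t$. These are the combinatorial ingredients you are missing; they replace, rather than supplement, the semi-conjugacy argument you sketched.
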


\subsubsection{Wandering intervals} In order to prove Proposition 6.1, we first relate the letters that loose infinitely often to wandering intervals. For the definition of a wandering interval we refer the reader to Definition \ref{def:wanderinginterval}.

\begin{lem}\label{lem:wanderinginterval}
    Let $T:I^t \to I^b$ be RV-stable, let $\alpha \in \mathcal{A} \backslash \mathcal{A}_{\infty}$ be a label that always looses. If $\alpha$ always looses at the bottom (top), then $I_{\alpha}^t$ is a forward (backward) wandering interval. If $\alpha$ looses both at the top and at the bottom, then $I_{\alpha}^t$ is both a forward and backward wandering interval. In both cases, $I_{\alpha}^t$ does not contain a recurrent orbit.
\end{lem}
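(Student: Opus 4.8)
The plan is to show that a label which only ever loses produces, under renormalization, a tower of unbounded height over a controlled base interval, so that the forward (resp.\ backward) iterates of $I^t_\alpha$ are exactly the pairwise disjoint floors of that tower. Throughout I use that $T$ is RV-stable: by Lemma~\ref{lemma:RVstable} no gap ever wins, so at every step two intervals play and one wins. Hence ``$\alpha$ loses at the bottom'' means precisely that $\alpha=\alpha_b$ is the rightmost bottom interval and the rightmost top interval wins, i.e.\ we are in Case~1a with the top winning; ``$\alpha$ loses at the top'' is the mirror situation (Case~1a, bottom wins). I also use that for every $n$ the map $T$ admits a tower representation $\mathrm{Rep}(I^{t,n})$ over $T^{(n)}$ (in the sense of Definition~\ref{def:towerrep}, as in the proof of Theorem~\ref{thm:PoincareYoccoz}), with base intervals $I^{t,n}_\beta$ and heights $k^n_\beta$ equal to the first return time of $I^{t,n}_\beta$; recall that when $\beta$ loses to a winner $\gamma$ the height updates by $k^{n+1}_\beta=k^n_\beta+k^n_\gamma\ge k^n_\beta+1$, while all other heights are unchanged.

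First I treat the case where $\alpha$ loses only at the bottom. Note $\alpha_b\neq\alpha_t$ at each such step, since if the two rightmost labels coincided Lemma~\ref{cylinderlemma} would produce a fixed point and force $\alpha$ to win. Reading off the Case~1a (top wins) formula, the new top intervals are $I^{t,1}_\beta=I^t_\beta\cap[0,l^b)$; since $I^t_\alpha$ lies to the left of the rightmost top interval, $I^t_\alpha\subset[0,l^t)\subset[0,l^b)$, so $I^{t,1}_\alpha=I^t_\alpha$ is unchanged. Combined with Lemma~\ref{intervalsthatdontplayremainunchanged} (the base is unchanged at steps where $\alpha$ does not play), this gives $I^{t,n}_\alpha=I^t_\alpha$ for every $n$. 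As $\alpha$ loses infinitely often, $k^n_\alpha\to\infty$. By the defining property of a tower representation, the floors $I^t_\alpha,\,T(I^t_\alpha),\dots,T^{k^n_\alpha-1}(I^t_\alpha)$ are pairwise disjoint and contained in $I^t$; letting $n\to\infty$ shows that all forward iterates $\{T^j(I^t_\alpha)\}_{j\ge 0}$ are well defined and pairwise disjoint, i.e.\ $I^t_\alpha$ is a forward wandering interval. The case where $\alpha$ loses only at the top is identical after replacing $T$ by its inverse g-GIET $T^{-1}$, which exchanges the roles of top and bottom and turns forward wandering into backward wandering.

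The main obstacle is the mixed case, where $\alpha$ loses infinitely often both at the top and at the bottom. Now the clean ``base is preserved'' step fails: at a top-loss step the Case~1a (bottom wins) formula replaces the top base by the proper subinterval $I^{t,n+1}_\alpha=\{x\in I^{t,n}_\alpha:\ T^{(n)}(x)\ge l^{t,n}\}$, so $I^{t,n}_\alpha$ shrinks, and symmetrically the bottom base shrinks at bottom-loss steps; thus neither base stays equal to $I^t_\alpha$. To recover forward wandering of the full interval I would first reduce disjointness of $\{T^j(I^t_\alpha)\}_{j\ge 0}$, using injectivity of $T$, to the single statement $T^k(I^t_\alpha)\cap I^t_\alpha=\varnothing$ for all $k\ge 1$, and then argue that a return of a point of $I^t_\alpha$ to $I^t_\alpha$ is incompatible with $\alpha$ being a pure loser: such a return is exactly what the induction would record as $\alpha$ winning, since its interval would have to be preserved or lengthened rather than consumed. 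Equivalently, one tracks $I^t_\alpha$ through the induction as a finite disjoint union of tower bases and checks that the pieces split off at top-loss steps are themselves carried by disjoint floors. This bookkeeping, rather than any new idea, is the delicate point of the proof.

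Finally, the absence of recurrent orbits follows formally. Suppose $I^t_\alpha$ is forward wandering, so $T^k(I^t_\alpha)\cap I^t_\alpha=\varnothing$ for all $k\ge 1$. If an orbit were entirely contained in $I^t_\alpha$, then for any point $x$ of it we would have $T(x)\in I^t_\alpha$ while also $T(x)\in T(I^t_\alpha)$, contradicting disjointness; hence no orbit lies in $I^t_\alpha$, and in particular no recurrent orbit is contained in $I^t_\alpha$. More precisely, for $x$ interior to $I^t_\alpha$ the neighbourhood $I^t_\alpha$ is avoided by all forward iterates $T^k(x)$, $k\ge1$, so $x\notin\omega(x)$ and $x$ is not $\omega$-recurrent, hence not recurrent. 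The backward-wandering case is symmetric, using $\alpha$-recurrence and $T^{-1}$, and this completes the plan.
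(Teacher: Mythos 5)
Your treatment of the two pure cases is correct and is exactly the paper's argument: when $\alpha$ only ever loses at the bottom, RV-stability forces every loss to be an interval-vs-interval step (Lemma~\ref{lemma:RVstable}), the base $I^{t,n}_\alpha=I^t_\alpha$ is unchanged, the height $k^n_\alpha\to\infty$, and the floors $I^t_\alpha,T(I^t_\alpha),\dots,T^{k^n_\alpha-1}(I^t_\alpha)$ of the tower of $T$ over $T^{(n)}$ are pairwise disjoint; the top case follows by passing to $T^{-1}$. Your closing argument that a forward (resp.\ backward) wandering interval carries no $\omega$- (resp.\ $\alpha$-) recurrent point is also fine and is more explicit than the paper, which leaves this implicit.

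The mixed case, however, is a genuine gap: you correctly observe that the ``base is preserved'' step fails there, but you then only sketch two possible repairs, neither of which works as stated. The heuristic ``a return of $I^t_\alpha$ to itself is exactly what the induction would record as $\alpha$ winning'' is not justified by any definition in the paper (winning is about which rightmost interval is longer, not about returns of $I^t_\alpha$ to itself), and the picture of $I^t_\alpha$ splitting into ``a finite disjoint union of tower bases'' rests on a misreading of the induction: at a top-loss step the new top interval of the loser is \emph{not} $\{x\in I^{t,n}_\alpha:\ T^{(n)}(x)\ge l^{t,n}\}\subset I^{t,n}_\alpha$, but rather $I^{t,n+1}_\alpha=(T^{(n)})^{-1}\bigl(I^{t,n}_\alpha\bigr)$, a subinterval of the \emph{winner's} top interval $I^{t,n}_\gamma$, which $T^{(n)}=T^{k^n_\gamma}$ maps bijectively onto the whole of $I^{t,n}_\alpha$. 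The correct bookkeeping is therefore that $I^t_\alpha$ never splits: it remains a single floor of the tower $\mathrm{Tow}(I^{t,n}_\alpha)$ at some level $j_n$, i.e.\ $I^t_\alpha=T^{j_n}(I^{t,n}_\alpha)$ with $0\le j_n<k^n_\alpha$. A top-loss step replaces $j_n$ by $j_n+k^n_\gamma$ and leaves the number $k^n_\alpha-j_n$ of floors at and above $I^t_\alpha$ unchanged; a bottom-loss step leaves $j_n$ unchanged and increases $k^n_\alpha-j_n$ by $k^n_\gamma$. Since the floors $T^{-j_n}(I^t_\alpha),\dots,I^t_\alpha,\dots,T^{k^n_\alpha-1-j_n}(I^t_\alpha)$ of one tower are pairwise disjoint and contained in $I^t$, and since $j_n\to\infty$ (resp.\ $k^n_\alpha-j_n\to\infty$) exactly when $\alpha$ loses infinitely often at the top (resp.\ bottom), the mixed case follows. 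To be fair, the paper itself dismisses the mixed case with the single clause ``either in the future, past, or both,'' so you have correctly located the one point that needs an argument; but your proposal does not supply it, and the routes you indicate would not.
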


\begin{proof} If $\alpha$ always looses at the bottom, then from the definition of RV-induction it follows that for any $n \in \mathbb{N}$, the tower $\text{Tow}(I_{\alpha}^{t,n})$ is equal to
\begin{align}
\text{Tow}(I_{\alpha}^{t,n}) = \{ I_{\alpha}^{t}, T(I_{\alpha}^{t}), \dots, T^{k_n}(I_{\alpha}^{t}) \},
\end{align}
i.e it consists of the forward images of $I_{\alpha}^{t}$ up to time $k_n$, where $k_n$ is the height of the tower at time $n$. In particular, since all the floors of a tower are disjoint, all forward images of $I_{\alpha}^{t,n}$ are disjoint. Similarly, if $\alpha$ always looses at the top, then the corresponding tower consists of all backward images of $I_{\alpha}^t$ up to time $k_n$ and hence all backward images of $I_{\alpha}^{t,n}$ are disjoint. Hence, $I_{\alpha}^t$ corresponds to a wandering interval (either in the future, past, or both).
\end{proof}

\subsubsection{The quasiminimal case}

In this subsection, we always assume that $\mathcal{W}$ is a quasiminimal. We have seen in Section \S~\ref{sec:somecombinatorialarguments} that for $T:I^t \to I^b$ RV-stable, there exists $\alpha, \beta \in \mathcal{A}_{\infty}$ such that $l \in I_{\alpha}^{t} \cap I_{\beta}^{b}$. We now want to show that $I_{\alpha}^{t} \cap [0,l)$ is in fact a wandering interval for $T$ all of whose iterates are contained in $(l,1)$.

Consider $\hat{T}$, where $\hat{T}$ has been obtained from $T$ by introducing a "fake" singularity at $l$ as follows: define
\begin{align*}
    I_{\alpha_1}^t = [0,l) \cap I_{\alpha}^t, \\
    I_{\alpha_2}^t = [l,1) \cap I_{\alpha}^t, \\ I_{\alpha_1}^b = T(I_{\alpha_1}^t),\\
    I_{\alpha_2}^b = T(I_{\alpha_2}^t),
\end{align*}
and let $\hat{T}$ be obtained from $T$ by replacing $I_{\alpha}^t$ with $I_{\alpha_1}^t$ and $I_{\alpha_2}^t$ as well as replacing $I_{\alpha}^b$ with $I_{\alpha_1}^b$ and $I_{\alpha_2}^b$. In Lemma \ref{alwaysloose} below, we want to show that if $l$ is the leftmost endpoint of a quasiminimal, then $\alpha_1$ always looses under RV induction on $\hat{T}$.

\begin{lem} \label{alwaysloose} Let $T:I^t \to I^b$ be an RV-stable g-GIET with renormalization limit  $l$, where $l$ is the leftmost endpoint of a quasiminimal. Let $\hat{T}: I^t \to I^b$ and $I_{\alpha_1}^t, I_{\alpha_2}^t$ be as above. Then the letter $\alpha_1$ always looses for Rauzy-Veech induction on $\hat{T}$.
\end{lem}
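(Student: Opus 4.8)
The plan is to run Rauzy--Veech induction on $\hat T$ while following the single left interval $I_{\alpha_1}^t = I_\alpha^t \cap [0,l)$ and to show it can never be a winner. First I would record the geometry forced by the hypothesis: since $l$ is the leftmost endpoint of the quasiminimal, it is (by Lemma~\ref{recurrencelemma}) a non-trivially recurrent point whose full orbit lies in $[l,1)$, so $T(l) > l$ and hence $I_{\alpha_1}^b = T(I_{\alpha_1}^t) \subset (l,1)$. Because $\hat T$ and $T$ coincide as maps, $\hat T$ is infinitely renormalizable with the same renormalization limit $l$; moreover the artificial singularity at $l$ is a left endpoint of a continuity interval that survives every induction step, since $l$ always lies in the inducing domain $[0,\lambda^{(n)})$ (as $\lambda^{(n)} > l$). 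Thus $l$ remains the left endpoint of $I_{\alpha_2}^{t,n}$ for all $n$, and Proposition~\ref{intervalthatcontainslremainsunchanged} applied to $\hat T$ gives $\alpha_2 \in \mathcal{A}_\infty$.

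Next I would prove, by induction on the step $n$, the invariant $I_{\alpha_1}^{t,n} \subset [0,l)$, simultaneously with the claim that $\alpha_1$ never wins. The base case is the definition of $\hat T$. For the top, the invariant makes the right endpoint of $I_{\alpha_1}^{t,n}$ at most $l < r^{t,n}$, so $I_{\alpha_1}^{t,n}$ is never the rightmost top interval; hence $\alpha_1$ neither plays nor wins at the top. The invariant is preserved by every move that is not a win of $\alpha_1$: when $\alpha_1$ does not play its interval is unchanged, and when $\alpha_1$ loses its top interval is only cut down, so it stays inside $[0,l)$. Combining the invariant with Lemma~\ref{winninglettersintersectl} (whose hypothesis $I_{\alpha_1}^{t,n} \cap [l,1) \neq \emptyset$ fails for $\alpha_1$) already yields $\alpha_1 \notin \mathcal{A}_\infty$.

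The main obstacle is to upgrade this to \emph{never wins}, i.e.\ to exclude even a single bottom win; this gives that $\alpha_1$ always loses, and necessarily at the bottom, since it never plays at the top, which is precisely the hypothesis needed to feed $I_{\alpha_1}^t$ into Lemma~\ref{lem:wanderinginterval}. Here I would argue by contradiction, supposing $I_{\alpha_1}^{b,n}$ is the rightmost bottom interval and wins against the rightmost top interval $I_\delta^{t,n}$ (or a top gap) at step $n$. In the configurations where the resulting inducing length $\lambda^{(n+1)} = \max(l^{t,n},l^{b,n})$ equals $l$, the domain is cut to $[0,l)$; this would expel $I_{\alpha_2}$ below $l$ and force $r^{t,m} = l$ for $m>n$, contradicting $\alpha_2 \in \mathcal{A}_\infty$ through Lemma~\ref{winninglettersintersectl} (if $\delta=\alpha_2$ this is exactly the cylinder situation of Lemma~\ref{cylinderlemma}). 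The remaining configurations, in which $I_{\alpha_2}$ sits strictly to the left of the top winner, are the delicate ones: I expect the key to be the positional fact that the right half $I_{\alpha_2}^{b,n} \subset (l,1)$ always \emph{shields} the left half $I_{\alpha_1}^{b,n}$ at the right end, so that whenever the $\alpha$-block reaches the rightmost bottom position it is $\alpha_2$, not $\alpha_1$, that plays. Verifying this shielding statement is where the real work lies.
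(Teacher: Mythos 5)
Your setup is sound: the invariant $I_{\alpha_1}^{t,n}\subset[0,l)$ does hold, it correctly rules out $\alpha_1$ ever playing at the top, and combining it with Lemma~\ref{winninglettersintersectl} does give $\alpha_1\notin\mathcal{A}_\infty$. But the heart of the lemma --- that $\alpha_1$ loses \emph{every} time it plays at the bottom, not merely that it wins only finitely often --- is exactly the step you defer to an unverified ``shielding'' claim, so the proof is not complete. Worse, the shielding statement as you formulate it (``whenever the $\alpha$-block reaches the rightmost bottom position it is $\alpha_2$, not $\alpha_1$, that plays'') is the wrong target: $\alpha_1$ \emph{must} play at the bottom, and in fact infinitely often. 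Since $r^{b,n}\to l$ and $I_{\alpha_2}^{b,n}\subset(l,r^{b,n})$, the left endpoint of $I_{\alpha_2}^{b,n}$ has to move down to $l$, which only happens when $\alpha_2$ loses at the bottom; each such loss cuts the domain precisely at the left endpoint of $I_{\alpha_2}^{b,n}$ and thereby exposes $I_{\alpha_1}^{b}$ as a candidate rightmost bottom interval. This is also forced by the intended application: Lemma~\ref{lem:wanderinginterval} concludes that $I_{\alpha_1}^t$ is a \emph{forward wandering interval} from the fact that the tower over $I_{\alpha_1}^{t,n}$ consists of its forward images up to time $k_n$ with $k_n\to\infty$, which requires $\alpha_1$ to lose at the bottom infinitely often; if $\alpha_1$ never played, nothing would follow.

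The paper's proof takes a different and shorter route, which you should compare with: it reuses the trichotomy a)/b)/c) established in the proof of Proposition~\ref{intervalthatcontainslremainsunchanged} for the steps where $\alpha$ plays at the bottom against the rightmost top interval $I_\gamma^{t,n}$. Case a) is already excluded there; in case c) the left endpoint of $I_{\alpha}^{b,n}$ (hence of $I_{\alpha_2}^{b,n}$) does not move and $\alpha_1$ stays hidden; and $\alpha_1$ comes into play only immediately after an occurrence of case b), in which configuration it must lose. So rather than trying to prove a permanent positional shield, you should analyse the single configuration in which $\alpha_1$ is exposed (right after case b)) and show it loses there; that is the content your argument is missing.
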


\begin{proof} Note that as in the proof of Proposition \ref{intervalthatcontainslremainsunchanged}, there exist $\alpha, \beta \in \mathcal{A}$ such that $l \in I_{\alpha}^{t,n} \cap I_{\beta}^{t,n}$ and $\alpha \neq \beta$ since we assume that $\mathcal{W}$ is a quasminimal. But then we have shown that only case b) and c) in the proof of Proposition \ref{intervalthatcontainslremainsunchanged} may arise for any step of the RV-induction on $T$. If we now consider $\hat{T}:I^t \to I^b$ (i.e the g-GIET obtained from $T$ by adding a fake singularity at $l$), then $\alpha_1$ plays for the Rauzy-Veech induction on $\hat{T}$ only after case b) arises. But then $\alpha_1$ must always loose.
\end{proof}

In particular, by Lemma \ref{lem:wanderinginterval}, $I_{\alpha_1}^t$ is a forward wandering interval for $\hat{T}$ (and hence also for $T$) and does not contain a recurrent orbit. We further remark that analogously, by considering $T^{-1}$, if $l \in I_\beta^b$ and $I_{\beta_1}^b$, $I_{\beta_2}^b$ are obtained by introducing a fake singularity at $l$ at the bottom then the letter $\beta_1$ always looses under Rauzy-Veech induction. Hence, $I_{\beta_1}^b$ is a backward wandering interval for $T$ that does not contain a recurrent orbit.


\subsubsection{The general case} We now assume $\mathcal{W}$ to be again either a periodic orbit or a quasiminimal. Before we turn to the proof of Proposition 6.1, we first prove the following auxiliary lemma below.

\begin{lem} Let $T: I^t \to I^b$ be RV-stable with renormalization limit  $l$ and corresponding orbit closure $\mathcal{W}$. Then $T$ restricted to $I_{\infty}^t$ is RV-stable.
\end{lem}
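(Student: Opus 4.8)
The plan is to verify the two defining features of RV-stability for $S := T|_{I_\infty^t}$ separately: that $S$ is infinitely renormalizable, and that it satisfies the win/loose dichotomy (every letter never plays or plays infinitely often, and in the latter case only wins, only looses, or both). Write $\mathcal{A} = \mathcal{A}_\infty \sqcup \mathcal{A}_{\mathrm{loose}} \sqcup \mathcal{A}_{\mathrm{never}}$, where, by RV-stability of $T$, the letters of $\mathcal{A}_{\mathrm{loose}}$ are those that play but only loose, and $\mathcal{A}_{\mathrm{never}}$ those that never play. By Lemma \ref{lem:wanderinginterval}, each $I_\gamma^t$ with $\gamma \in \mathcal{A}_{\mathrm{loose}}$ is a wandering interval containing no recurrent orbit; by Lemma \ref{intervalsthatdontplayremainunchanged}, each $I_\gamma^t$ with $\gamma \in \mathcal{A}_{\mathrm{never}}$ stays unchanged and is contained in the domain of every $T^{(n)}$, whose right endpoint tends to $l$, so that $I_\gamma^t \subset [0,l)$.

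The first step is to prove $\mathcal{W} \subset I_\infty^t$, which hands us infinite renormalizability essentially for free. Since the leftmost endpoint of $\mathcal{W}$ is $l$, every point of the recurrent orbit $\mathcal{O}(l)$ lies in $[l,1)$ and is itself recurrent; hence it avoids the intervals $I_\gamma^t$ with $\gamma \in \mathcal{A}_{\mathrm{never}}$ (which sit in $[0,l)$) and the wandering intervals $I_\gamma^t$ with $\gamma \in \mathcal{A}_{\mathrm{loose}}$, forcing $\mathcal{O}(l) \subset I_\infty^t$. As $\mathcal{W} = \overline{\mathcal{O}(l)} \subset I_\infty^t$ is $T$-invariant, it is a recurrent orbit closure for $S$ with leftmost endpoint $l$, so Lemma \ref{leftendpointlemma} applied to $S$ yields at once that $S$ is infinitely renormalizable.

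For the dichotomy I would split according to $|\mathcal{A}_\infty|$. If $|\mathcal{A}_\infty| = 1$, then $\mathcal{W}$ is a periodic orbit (Proposition \ref{winningletterstrichotomy}(2)) and, by Proposition \ref{intervalthatcontainslremainsunchanged}, the unique surviving letter $w$ satisfies $l \in I_w^t \cap I_w^b$; thus $S$ is a one-interval cylinder with a fixed point at $l$, and Lemma \ref{cylinderlemma} guarantees that RV-induction never empties the interval and that $w$ plays at every step, so $S$ is RV-stable. If $|\mathcal{A}_\infty| > 1$, I would prove by induction on the step count that the RV-orbit of $S$ mirrors that of $T$ after deleting the wandering and never-playing letters: whenever the rightmost top and bottom intervals of the current iterate both carry labels in $\mathcal{A}_\infty$, the RV-step of $S$ reproduces that of $T$; whenever the rightmost interval of $T$ carries a label in $\mathcal{A}_{\mathrm{loose}}$, that slot is a gap for $S$, and since such a letter only ever looses in $T$ the adjacent surviving interval wins against this gap (a case-2a step). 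Consequently the number of intervals of $S$ stays equal to $|\mathcal{A}_\infty| \geq 2$ (no case 1b or 2b occurs, so the induction never stops), no surviving letter is ever removed, and each $\alpha \in \mathcal{A}_\infty$—winning infinitely often for $T$—also wins infinitely often for $S$; in particular no letter of $S$ stops playing, which gives RV-stability (indeed $\infty$-completeness).

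The hard part will be making this correspondence precise. Two points need care: (i) identifying the intervals and labels of $S$ with $\mathcal{A}_\infty$, including the possible merging of adjacent surviving intervals whose images fail to be adjacent; and (ii) showing that each ``$T$ plays a wandering letter'' step really becomes a case-2a (interval-beats-gap) step of $S$, never a case-1b or 2b step that would shrink the alphabet. Establishing (ii) requires controlling, through Lemma \ref{winninglettersintersectl} and Proposition \ref{intervalthatcontainslremainsunchanged}, the relative positions near $l$ of the rightmost surviving intervals and of the gaps vacated by the removed wandering intervals. As a cheaper but weaker fallback, Remark \ref{rk:RVstable} already yields that $S$ is \emph{eventually} RV-stable; upgrading this to RV-stability from the initial step is exactly what the correspondence above is needed for, since it rules out any surviving letter playing only finitely often at the start.
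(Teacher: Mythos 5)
Your proposal follows essentially the same route as the paper's proof: the same case split on whether $\mathcal{W}$ is a periodic orbit or a quasiminimal, and in the latter case the same key observation that every gap of $T|_{I_{\infty}^t}$ is either a gap of $T$ or the slot vacated by an always-loosing interval, and must therefore itself loose whenever it plays, so that the number of intervals stays constant and every letter wins infinitely often. The step you flag as ``the hard part'' is precisely the one the paper dispatches in a single sentence, so your write-up is if anything more candid about where the detail lies, and your added preliminary step (showing $\mathcal{O}(l)\subset I_{\infty}^t$ and invoking Lemma~\ref{leftendpointlemma} for infinite renormalizability) is sound but not part of the paper's argument.
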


\begin{proof} If $\mathcal{W}$ is a periodic orbit, then $T$ restricted to $I_{\infty}^t$ consists of one interval which always wins, in particular $T$ is RV-stable.

If $\mathcal{W}$ is a quasiminimal, then the gaps of $T$ restricted to $I_{\infty}^t$ are either in the complement of $I^t$ or in $I^t \backslash \mathcal{A}_{\infty}$. Whenever a gap plays, if it is in the former case, it must loose as $T$ is RV-stable. In the latter case, it must also loose as it replaces an interval that always looses. Hence, the number of intervals of $T$ restricted to $\mathcal{A}_{\infty}$ is constant, and as all intervals win infinitely often, $T$ is RV-stable.
\end{proof}

We now prove the following lemma:

\begin{lem} Let $T: I^t \to I^b$ be RV-stable. Then $I_{\infty}^t$ is a domain of recurrence for $T|_{I_{\infty}^t}$.
\end{lem}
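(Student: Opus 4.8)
The plan is to prove the equivalent statement that $\tilde T:=T|_{I_\infty^t}\colon I_\infty^t\to I_\infty^b$ (where $I_\infty^b:=T(I_\infty^t)=\bigcup_{\alpha\in\mathcal A_\infty}I_\alpha^b$) has no transient orbit. Recall from \S\ref{sec:typeorbits} that an orbit is transient precisely when it is finite and leaves the domain through a top gap in forward time and through a bottom gap in backward time. Hence it suffices to exclude, for every $x\in I_\infty^t$, the simultaneous occurrence of a forward escape (some $\tilde T^{l}(x)\in I_\infty^b\setminus I_\infty^t$) and a backward escape (some $\tilde T^{-m}(x)\in I_\infty^t\setminus I_\infty^b$). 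By the previous lemma $\tilde T$ is RV-stable, so all results of \S\ref{sec:renormandrecurrence} apply to it; write $\tilde l$ for its renormalization limit and recall from Lemma~\ref{recurrencelemma} that $\tilde l$ is the leftmost endpoint of a recurrent orbit closure of $\tilde T$. Since every label of $\tilde T$ lies in $\mathcal A_\infty$ and therefore wins infinitely often, Proposition~\ref{winningletterstrichotomy} leaves exactly two cases, according to whether this closure is a fixed point or a quasiminimal.

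In the \emph{periodic case} $|\mathcal A_\infty|=1$, so $I_\infty^t$ is a single interval and $\tilde T$ is a single orientation preserving diffeomorphism of $I_\infty^t$ onto $I_\infty^b$. Applying the trichotomy to $\tilde T$ together with Lemma~\ref{cylinderlemma}, $\tilde l$ is a fixed point of $\tilde T$. Since $\tilde T$ is monotone, every orbit is a monotone sequence lying on one side of $\tilde l$; such a sequence can diverge out of the interval in at most one of the two time directions, while in the opposite direction it stays between the orbit point and $\tilde l$ and hence converges to a fixed point. Thus at least one of the two semi-orbits is infinite, so the orbit is not transient, and $I_\infty^t$ is a domain of recurrence.

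In the \emph{quasiminimal case} $|\mathcal A_\infty|>1$, I would exploit the wandering-interval analysis already carried out in this subsection. By Lemma~\ref{alwaysloose} and the remark following it, introducing the fake singularity at $\tilde l$ turns the left pieces $I_{\alpha_1}^t=I_\alpha^t\cap[0,\tilde l)$ and $I_{\beta_1}^b=I_\beta^b\cap[0,\tilde l)$ into a forward and a backward wandering interval respectively, each containing no recurrent orbit and, crucially, with all of its forward (resp.\ backward) iterates contained in $(\tilde l,1)$. The strategy is to show that an orbit of $\tilde T$ which does not immediately leave $I_\infty^t$ stays trapped in $(\tilde l,1)$ and accumulates on $\mathcal W$, hence is infinite; while the only mechanism producing an escape into a gap of $\tilde T$ is the entry of the orbit into one of the intervals $I_\gamma^t$ with $\gamma\notin\mathcal A_\infty$ that were discarded in forming $I_\infty^t$, which by Lemma~\ref{lem:wanderinginterval} are one-sided wandering intervals. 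Combining these, a forward escape forces the forward semi-orbit into a forward-wandering regime and a backward escape forces the backward semi-orbit into a backward-wandering regime; for a finite orbit these cannot coexist, since then one of the two semi-orbits would instead be trapped in $(\tilde l,1)$ accumulating on $\mathcal W$ and thus be infinite. Therefore no orbit is transient.

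I expect the quasiminimal case to be the main obstacle. The delicate point is to translate the abstract notion of an orbit escaping into a \emph{gap of the restricted map} $\tilde T$ into the concrete wandering-interval language of Lemmas~\ref{lem:wanderinginterval} and~\ref{alwaysloose}: a gap of $\tilde T$ is either an original gap of $T$ or one of the removed intervals $I_\gamma^t$ with $\gamma\notin\mathcal A_\infty$, so one must follow an orbit through these removed intervals, use that each is wandering only in one time direction, and combine this with the fact that the iterates of the pieces $I_{\alpha_1}^t$ and $I_{\beta_1}^b$ never leave $(\tilde l,1)$, in order to conclude that a finite orbit cannot exit $I_\infty^t$ through gaps in both time directions. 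The ordering information supplied by Proposition~\ref{mainprop} (that $\tilde l$ is the leftmost point of the rightmost recurrent closure) is what rules out the orbit escaping ``to the left'' past $\tilde l$.
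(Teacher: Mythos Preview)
Your periodic case ($|\mathcal A_\infty|=1$) is essentially correct, modulo the minor imprecision that the orbit is trapped on one side of the \emph{nearest} fixed point of $\tilde T$ rather than of $\tilde l$ specifically; on each maximal fixed-point-free subinterval the sign of $\tilde T(x)-x$ is constant, so the orbit is monotone there and can escape in at most one time direction.

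The quasiminimal case, however, has a genuine gap, as you yourself anticipate. The proposed dichotomy---forward escape forces a forward-wandering regime, backward escape a backward-wandering regime, and ``for a finite orbit these cannot coexist''---is not an argument. A transient orbit is by definition finite and escapes in both directions, so the sentence ``one of the two semi-orbits would instead be trapped in $(\tilde l,1)$ accumulating on $\mathcal W$ and thus be infinite'' is self-contradictory. Moreover, the gaps of $\tilde T$ are not only the removed intervals $I_\gamma^t$ with $\gamma\notin\mathcal A_\infty$ but also the original gaps of $T$; a putative transient orbit of $\tilde T$ lying entirely in $(\tilde l,1)$ and exiting through original gaps on both sides never touches a removed interval, so Lemmas~\ref{lem:wanderinginterval} and~\ref{alwaysloose} say nothing about it. This is precisely the case you have no tool for.

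The paper's proof does not split into periodic versus quasiminimal; it splits according to whether the putative transient $\tilde T$-orbit lies entirely in $[\tilde l,1)$ or meets $[0,\tilde l)$. The second case is handled via Lemma~\ref{alwaysloose} roughly as you sketch. For the first case, the key idea you are missing is this: let $x_0$ be the minimum of the finite orbit and $x_1$ its second-smallest point, and choose $n$ with $x_0\le r^{t,n}\le x_1$. Then $x_0$ is the only orbit point in $[0,r^{t,n})$, and since the orbit is transient it never returns there in either direction; hence the induced map $\tilde T^{(n)}$ has a gap at $x_0$ both on top and on bottom. These two gaps persist under further RV steps until one of them wins and a letter is removed, contradicting the RV-stability of $\tilde T$. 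This ``double gap at the orbit minimum'' argument is what closes the hole, and it works uniformly without any periodic/quasiminimal case distinction.
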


\begin{proof} By the previous lemma, $T$ restricted to $I_{\infty}^t$ is again RV-stable. Now assume that there exists a transient point $x \in I_{\infty}^t$, i.e there exist $m,l \in \mathbb{N}$ such that $T^{m+1}(x) \notin I^t$ and $T^{-(l+1)}(x) \notin I^b$, but \{$T^{-l}(x), \dots, x, \dots ,T^m(x)\} \in I^t \cap I^b$. There are two cases: either the transient orbit is entirely contained in $[l,1)$, or some point of the transient orbit lies in $[0,l)$. If the entire orbit is contained in $[l,1)$, let
\begin{align*}
x_0 &:= \text{min}\{T^{-l}(x), \dots, x, \dots T^m(x)\},\\
x_1 &:= \text{min}\{T^{-l}(x), \dots, x, \dots T^m(x)\} \backslash \{x_0\}.
\end{align*}

\begin{figure}[h]
\centering
\includegraphics[width=1.0\textwidth]{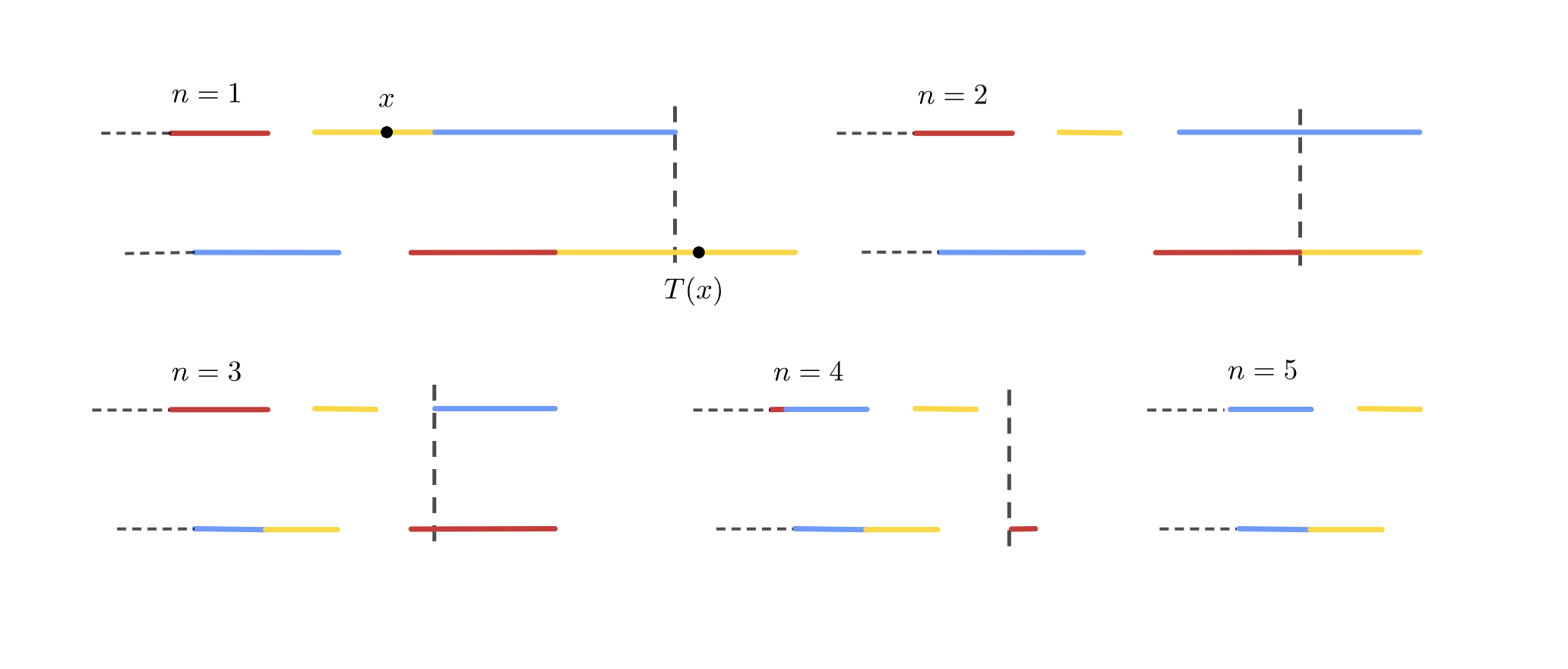}
\caption{\label{fig:recurrence}An example of a transient point $x \in I^t$ for which $T(x) \notin I^t$ and $x \notin I^b$ (i.e $l=1, m=0$). The induction removes the red interval.}
\end{figure}

As the renormalization limit  is equal to $l$, there exists $n \in \mathbb{N}$ such that $x_0 \leq r^{t,n} \leq x_1$. As $T^{(n)}$ is the first return map of $T$ to $[0, r^{t,n})$ and as $x_0$ is the minimum of the finite orbit through $x$, neither $T^{(n)}(x_0)$ nor $(T^{(n)})^{-1}(x_0)$ are well-defined, hence $T^{(n)}$ has a gap at $x_0$ both at the top and at the bottom. Both gaps persist under further iteration of RV-induction until one of the gaps wins (see Figure \ref{fig:recurrence}). But from the definition of RV-induction, when a gap wins the induction removes a letter (see Case 2b in $\S$ \ref{sec:RV g-GIETs}), which is a contradiction to $T$ being RV-stable, i.e in particular the number of intervals of $T$ should remain constant.

If a point of the orbit is contained in $[0,l)$, it must pass through $I_{\alpha_1}^t$ or $I_{\beta_1}^b$. However, by the proof of Lemma \ref{lem:wanderinginterval}, all iterates $\{ T^{n}(I_{\alpha_1}^t), n \in \mathbb{N} \}$ and all iterates  $\{ T^{-n}(I_{\beta_1}^b), n \in \mathbb{N} \}$ are contained in $[l,1) \cap I_{\infty}^t$, hence a point in $I_{\alpha_1}^t$ or $I_{\beta_1}^b$ can not be transient.
\end{proof}

Finally, we are able to prove Proposition 6.1.

\begin{proof}[Proof (of Proposition 6.1)] From the previous lemma, we know that $I_{\infty}^t$ is a domain of recurrence for $T|_{I_{\infty}^t}$. Hence any recurrent orbit of $T$ either is entirely contained in  $I_{\infty}^t$ or is disjoint with $I_{\infty}^t$, as otherwise by recurrence we could construct a transient point in $I_{\infty}^t$ for $T|_{I_{\infty}^t}$.

Therefore, since $l \in \mathcal{W} \,\cap\, I_{\infty}^t$, it follows that $\mathcal{W} \subset  I_{\infty}^t$. If $\mathcal{W}$ is a fixed point, then $I_{\infty}^t$ consists of one interval. Hence any recurrent orbit that intersects this interval must by the previous observation lie entirely within this interval and is hence a fixed point.

If $\mathcal{W}$ is a quasiminimal, then we claim that $I_{\infty}^t$ does not intersect any other quasiminimal $\hat{\mathcal{W}}$: indeed, if this was the case, then $\hat{\mathcal{W}}$ must lie entirely in $I_{\infty}^t$ and as its leftmost endpoint must be smaller than $l$ by Proposition \ref{leftendpointlemma}, it must intersect $I_{\alpha_1}^t$ from Lemma \ref{alwaysloose} which is a contradiction to Lemma \ref{alwaysloose} which states that $I_{\alpha_1}^t$ does not contain any recurrent orbits.
\end{proof}

\subsection{Renormalization scheme and proof of main theorems} \label{sec:proofs}  The proof of Theorem 1.1, 1.2 and 1.3 will follow by induction, namely we will define a \emph{renormalization scheme} which consists of iterating the elementary steps from the previous section finitely many times to obtain the desired dynamical decomposition.


\smallskip
\noindent {\bf Basic step of the renormalization scheme:} Let $T:I^t \to I^b$ be a g-GIET on $d\geq 1$ intervals.  Recall that we denote by $d_n$ the number intervals exchanged by $T^{(n)}$.  Consider the (possibly finite) sequence of g-GIETs $T^{(1)}, T^{(2)}, T^{(3)} \dots$ obtained when iterating RV induction starting from $T$.

    \vspace{.9mm}
\noindent
Consider the three following possible cases (which are mutually exclusive, as we show in  Lemma~\ref{lemma:exclusive} below):
\begin{enumerate}[]
    \item
    \textbf{Induction detects two singular orbits:} {\it there exists a smallest $n \in \mathbb{N}$ such that the rightmost top and bottom label of $T^{(n)}$ are equal, i.e $\pi_t^n(d_n) = \pi_b^n(d_n)$ = $\alpha$, and the right endpoints are equal, i.e $r^{t,n} = r^{b,n}$. } In this case, $T|_{I_{\alpha}^t}$ is a g-GIET on one interval for which all recurrent orbits are (possibly singular) fixed points, and $I_{\alpha}^t$ is a domain of recurrence for $T|_{I_{\alpha}^t}$.

    \vspace{1.3mm}

    \item \textbf{Induction accumulates at recurrent orbit closure:} {\it there exists $n \in \mathbb{N}$ such that $T^{(n)}$ is RV-stable with renormalization limit $l$}. Here $l>0$ or $l=0$ are both possible.
    In this case, let $I_{\infty}^{t}$ be the set of top intervals that win infinitely often. From Proposition \ref{findingdomrec} we know that $I_{\infty}^t$ is a domain of recurrence for the restriction $T|_{I_{\infty}^t}$. Furthermore, $T|_{I_{\infty}^t}$ is either a g-GIET on one interval for which all recurrent orbits are fixed points, of a g-GIET on more than one interval which contains a unique quasiminimal and for which all recurrent orbits are non-trivially recurrent. In the latter case, since the rotation number of $T|_{I_{\infty}^t}$ is infinite complete, it follows from Theorem \ref{thm:PoincareYoccoz} that $T|_{I_{\infty}^t}$ is semi-conjugated to a minimal IET with the same rotation number.

     \vspace{1.3mm}
    \item \textbf{Induction trivializes:} {\it there exists $n \in \mathbb{N}$ such that $d_{n+1}=0$, and we are not in case $(1)$.} In this case, $T$ does contain any recurrent orbit (all orbits either enter or escape from a gap) and $I^t$ is a transition domain.
\end{enumerate}
We now show that these three cases describe all the possible outcomes for the first step of the scheme:
\begin{lem}\label{lemma:exclusive}
For any map in the (possibly finite) sequence $T^{(1)}, T^{(2)}, \dots$ obtained by applying Rauzy-Veech induction to a g-GIET $T$ on $d\geq 1$ intervals, one and only one of the three possibilities $(1)$, $(2)$ and $ (3)$ descibed above holds.
\end{lem}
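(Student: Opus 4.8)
The plan is to run Rauzy--Veech induction on $T$ and track which of the three events occurs first, organizing everything around the dichotomy between an infinitely renormalizable orbit and one that stops (reaches $d_{n+1}=0$). For exhaustiveness I would argue as follows. If at some step the rightmost top and bottom intervals carry the same label and share the same right endpoint, then the smallest such step exhibits $(1)$. Suppose instead this never happens, so that every elementary step is a genuine interval-versus-interval move between \emph{distinct} labels or an interval-versus-gap move. If the orbit is infinitely renormalizable, Remark~\ref{rk:RVstable} produces an $N$ with $T^{(N)}$ RV-stable, giving $(2)$. If the orbit stops, the terminal step from $d=1$ to $d=0$ is by definition a Case 1b) or Case 2b) step; but Case 1b) at $d=1$ would force the unique top and bottom intervals to coincide (equal labels, equal right \emph{and} left endpoints), i.e.\ the configuration of $(1)$, which we have excluded, so the orbit must stop through Case 2b) and we are in $(3)$. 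This covers all possibilities.

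For mutual exclusivity, the two pairs involving $(3)$ are immediate. Since RV-stability is defined only for infinitely renormalizable maps (Definition~\ref{def:RVstable}), possibility $(2)$ forces the orbit to be infinite, so $d_{n+1}\neq 0$ for every $n$ and $(3)$ fails; thus $(2)$ and $(3)$ are incompatible. The incompatibility of $(1)$ and $(3)$ is built into the statement of $(3)$, which explicitly assumes we are not in case $(1)$.

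The main obstacle is the exclusivity of $(1)$ and $(2)$, which I would settle by showing that the cylinder configuration detected in $(1)$ cannot occur at an RV-stable iterate. Suppose $(1)$ holds at the smallest step $n_0$, so the rightmost top and bottom intervals of $T^{(n_0)}$ both carry a single label $\alpha$ and share the right endpoint $\lambda=r^{t,n_0}=r^{b,n_0}$; thus $T^{(n_0)}$ maps $[l^{t,n_0},\lambda)$ onto $[l^{b,n_0},\lambda)$ with the two intervals right-aligned at $\lambda$. The key point is that this is not a clean elementary move in the sense of Lemma~\ref{lemma:RVstable}(1): one cannot record $\alpha$ as simultaneously winning and losing, and carrying out the induction one sees that the part of $I_{\alpha}^{t,n_0}$ lying to one side of $\min(l^{t,n_0},l^{b,n_0})$ is trapped in a forward- (or backward-) invariant interval abutting $\lambda$ and never returns, so the next step either deletes $\alpha$ outright (Case 1b), when $l^{t,n_0}=l^{b,n_0}$) or creates a genuine gap there. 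In either case the number of exchanged intervals drops and the letter $\alpha$ ceases to play, which by Lemma~\ref{lemma:RVstable} is impossible for an RV-stable map; hence $T^{(n_0)}$ is not RV-stable. Since the basic step halts at this first detection of a cylinder, possibility $(2)$ does not apply, and the delicate step to get right is precisely this verification that a same-label, right-aligned rightmost pair is incompatible with the constant interval count enforced by RV-stability.
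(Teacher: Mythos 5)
Your exhaustiveness argument is correct but follows a genuinely different route from the paper's. The paper splits on whether the number of exchanged intervals ever decreases: if not, Remark~\ref{rk:RVstable} yields an RV-stable iterate and outcome $(2)$; if so, it lists the decreasing steps $n_1<\dots<n_k$, notes each is a Case 1b) or Case 2b) move, sends any occurrence of Case 1b) to outcome $(1)$, and otherwise reads off $(3)$ or $(2)$ according to whether $d_{n_k}=1$ or $d_{n_k}>1$. You instead split on whether the same-label, same-right-endpoint configuration of $(1)$ ever occurs, and then on the dichotomy between an infinitely renormalizable orbit and one that stops. Your route is in one respect more faithful to the literal statement of $(1)$: Case 1b) records equal right endpoints and equal \emph{lengths}, not equal \emph{labels}, so the paper's inference ``Case 1b) occurs $\Rightarrow$ we are in $(1)$'' tacitly assumes $\pi_t^{-1}(d)=\pi_b^{-1}(d)$; you only invoke Case 1b) at $d=1$, where the labels coincide automatically.

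The mutual-exclusivity half, which the paper's proof does not address, is where your argument has a genuine gap, namely in separating $(1)$ from $(2)$. You show that $T^{(n_0)}$ is not RV-stable at the first step $n_0$ where the cylinder configuration appears, and conclude that $(2)$ fails because ``the basic step halts at this first detection''. But $(2)$ asserts the existence of \emph{some} RV-stable iterate, and non-stability of $T^{(n_0)}$ does not exclude stability of $T^{(m)}$ for $m>n_0$: once the induction runs past the cylinder, the letter $\alpha$ is discarded and the induction continues on the remaining intervals, which may perfectly well stabilize. Concretely, take a GIET on three intervals whose rightmost top and bottom intervals are both $I_\alpha=[1/2,1)$ (an invariant cylinder) and whose restriction to $[0,1/2)$ is an irrational rotation: the configuration of $(1)$ holds at the very first step, yet all sufficiently deep iterates are RV-stable, so $(2)$ holds as well. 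Thus $(1)$ and $(2)$ can only be separated by the convention that the scheme terminates at the first condition detected; invoking that convention is a statement about the scheme, not a consequence of the definitions, and it needs to be made explicit rather than derived.

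A smaller inaccuracy in the same passage: you assert that the step following the cylinder configuration ``deletes $\alpha$ outright or creates a genuine gap'', so that ``in either case the number of exchanged intervals drops''. When $l^{t,n_0}\neq l^{b,n_0}$ the configuration of $(1)$ falls under Case 1a), whose explicit formulas in \S~\ref{sec:RV g-GIETs} keep the number of intervals equal to $d$ at the very next step; the count drops only under the ``first return map to $[0,\lambda^{(1)})$'' reading of the elementary step, and the two readings diverge precisely in this degenerate same-label situation, because $[\max(l^{t,n_0},l^{b,n_0}),r^{t,n_0})$ is invariant and part of $I_\alpha^{t,n_0}$ never returns. Your conclusion that $T^{(n_0)}$ is not RV-stable survives, but reaching the drop in the interval count takes either the first-return reading or one or two further elementary steps, and you should state which convention you are using.
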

\begin{proof}
Assume first that there is no $n$ such that $d_{n+1}<d_n$, i.e.~the number of exchanged intervals is constant along the RV orbit. In this case,  $T$ is infinitely renormalizable (since the induction can stop only after decreasing the number of intervals, see \S~\ref{sec:RV g-GIETs}). Recalling  Remark~\ref{rk:RVstable}, if $n$ is sufficienly large, $T^{(n)}$ is RV-stable, so case $(3)$ holds.

If on the other hand there exists an $n$ such that $d_{n+1}<d_n$  (i.e. the number of intervals at some point decreases), let $n_1<n_2< \dots < n_k$ be all positive integers $n$ such that $1\leq d_{n+1}<d_n$
(here $1\leq k< d$ since the number of intervals can only decrease $d-1$ times).  
  From the description of one step of RV induction in  \S~\ref{sec:RV g-GIETs}, since  the number of intervals decreases either when two intervals play and neither wins (Case 1b in \S~\ref{sec:RV g-GIETs}) or when a gap wins against an interval (Case 2b in \S~\ref{sec:RV g-GIETs}), for each  $n\in \{ n_1,\dots, n_k\}$, $T^{(n)}$ is in one of these two cases.
If  there exists $n \in \{ n_1, \dots , n_k\}$  such that $T^{(n)}$ is in Case 1b), then we are in $(1)$.  If not, consider $n_k$ and the corresponding $d_{n_k}$. If $ d_{n_{k}}=1$, then, after  applying Step 2b, $d_{n_{k}+1}=0$ and hence $(3)$ holds.
Finally, if  instead $d_{n_k}>1$, the number of exchanged intervals is constant for any $n\geq n_k$ so reasoning as at the beginning of the proof, by Remark \ref{rk:RVstable}, there exists $n\geq n_k$ such that $T^{(n)}$ is RV-stable and again $(2)$ holds. This concludes the proof.
\end{proof}

\smallskip
\noindent {\bf Output of the first step}:
If $T$ is in case (1), i.e.~ the renormalization detected two singular orbits and  $I^t_\alpha$ as a domain of trivial recurrence,  or in  case (2), i.e.~the induction accumulates at a recurrent orbit closure and identifies $I^t_\infty$, we can \emph{record} the outcome of the first step of the renormalization scheme, by defining:
$$
E_1^t:= \begin{cases} I^t_\alpha, & \text{in \ Case\ } (1),\\
I_{\infty}^t  & \text{in \ Case\ } (2).
\end{cases}
$$
Thus, $E_1^t$ is the domain of recurrence detected by the induction in the first step of the scheme. In Case (3),  the induction detected that $I^t$ is a domain of transition, so there no domain of recurrence is recorded as output.

 \begin{rem}\label{rk:output} Remark that in both cases the restricted map $T_1$ defined by
 $$T_1 := T^{(n)}|_{E_1^t}: E_1^t \to E_1^b$$
 satisfies the assumptions of either i) or ii) of Theorem 1.2.
 \end{rem}

\smallskip
\noindent {\bf Stopping conditions}: the renormalization scheme is complete, in the following cases:
\begin{itemize}
\item[S1)]  if
$T$ is in Case (1) and $d_n=1$ (since in this case $d_{n+1}=0$);
\item[S2)] if $T$ is in Case (2) and the renormalization limit $l=0$;
\item[S3)] if $T$ is in Case (3), i.e.~the induction trivializes.
\end{itemize}
\smallskip

\noindent {\bf Further steps of the  scheme}:  If none of the \emph{stopping conditions} S1), S2), or S3) defined above hold, we \emph{iterate} the renormalization scheme as follows.   Consider the map obtained by restricting $T^{(n)}$ to the complement of $E_1$,
\begin{equation}\label{eq:newstepmap} T^{(n)}|_{I^t \backslash E_1^t}: I^t \backslash E_1^t \to I^b \backslash E_1^b.\end{equation} We can now repeat the basic step of the scheme taking this new g-GIET as initial $T$.   By induction, we can keep iterating the basic step of the scheme in the same way, unless one of the stopping condition holds.
 Note  that by construction, restricting $T^{(n)}$ to the complement of $E_1$,
we get a  g-GIET on strictly less intervals than $T^{(n)}$. Therefore, after a finite number of steps (at most $d$),
the renormalization scheme stops.

As output of the steps of the renormalization scheme we obtain $k\leq d$ recurrence domains $E^t_i$, $1\leq i\leq k$ and, correspondingly, the  $k$ g-GIETs
\begin{equation}\label{eq:koupput}
T_i:E_i^t \to T_i(E_i^t), \qquad 1 \leq i \leq k. \end{equation}
\begin{proof}[Proof of Theorems~\ref{thm:decomposition}  and~\ref{thm:structure} ]
We will prove at the same time that the renormalization scheme produces  the decomposition in Theorem~\ref{thm:decomposition},  and that the domains have the structure claimed by Theorem~\ref{thm:structure}.

 \vspace{1mm}
\noindent{\it Decomposition pieces.}  Given a GIET $T;I^t\to I^b$ (which is in particular a g-GIET), consider the  domains of recurrence $E_i^t\subset I^t$ and the associated  g-GIETs $T_i:E_i^t \to T_i(E_i^t)$ for $1 \leq i \leq k$  obtained by starting from $T$ and following the renormalization scheme described above. As already remarked (see Remark~\ref{rk:output}), all these g-GIETs satisfy the assumptions of Theorem~\ref{thm:structure}.

Now for each $1 \leq i \leq k$, consider the tower representation $R_i^t := $ Rep$(E_i^t)$ of the g-GIET $T_i$ over the original g-GIET $T$ and set $D^t := [0,1) \backslash \sqcup_{i=1}^k R_i^t$, so that by construction
$[0,1) = R_1^t \cup \dots \cup R_k^t \cup D$.

\vspace{1mm}
\noindent{\it Properties of the pieces of the decomposition.}  We claim that each $R_i^t$ defined above satisfies either (i) or (ii) of the decomposition Theorem~\ref{thm:decomposition}.
 Indeed, if $T_i$ has a (possibly singular) fixed point and all recurrent orbits are fixed, then $T|_{R_i^t}$ has a (possibly singular) periodic orbit and all recurrent orbits are periodic of the same period. Similarly, if $T_i$ has a unique quasiminimal and all recurrent orbits are non-trivially recurrent, then $T|_{R_i^t}$ contains a unique quasiminimal and all recurrent orbits are non-trivially recurrent.

The fact that $R_i^t$ are a domain of recurrence for $1, \leq i \leq k$ follows from the fact that corresponding $E_i^t$ are a domain of recurrence. Furthermore, $D^t := [0,1) \backslash \sqcup_{i=1}^k R_i^t$ is a domain of transition for $T$, since all recurrent orbits of $T$ are contained in some $E_i^t$, and hence also in some $R_i^t$, for $1 \leq i \leq k$.
Finally, the tower structure described by Theorem~\ref{thm:structure} is automatic by the definition of the $R^t_i$.

\vspace{1mm}
\noindent{\it Pairwise disjointness.}
It is left to prove that the tower representations $R_1^t, \dots, R_k^t$ are all disjoint: Assume by contradiction there exists $i > j$ such that $R_i^t \cap R_j^t \neq \emptyset$. Then, by Remark~\ref{rk:tower}, this implies that there exists a point $x \in E_i^t$ and $n \in \mathbb{N}$ such that $T^{n}(x) \in R_j^t$. By construction, both $E_i^t$, $E_i^b$ are disjoint with $R_j^t$. Hence, since $x \in E_i^t$ is not in $R_j^t$, and $R_j^t$ is a region of recurrence, it follows $T^{m}(x) \in R_j^t$ for all $m > n$, since otherwise the orbit through $x$ would be transient for $R_j^t$. But this is a contradiction since $x \in E_i^t$, in particular the orbit through $x$ must return to $E_i^b$, hence there exists $m > n$ such that $T^m(x) \notin R_j^t$. This concludes the proof of both theorems.
\end{proof}

\subsection{Bounds on periodic orbits and ergodic measures.}\label{sec:ergodic_bound}
Let us now prove Theorem~\ref{thm:boundonperiodicdomains} and Corollary~\ref{cor:ergodic_bound}.
\begin{proof}[Proof of Theorem~\ref{thm:boundonperiodicdomains}] If $d_i$ for $1 \leq i \leq k$ denotes the number of intervals of the domains $E_i^t$ defined in the proof of Theorem~\ref{thm:decomposition} and~\ref{thm:structure}, then by Proposition \ref{winningletterstrichotomy} if $E_i$ contains only periodic recurrent trajectories, then $d_i = 1$, whereas if $E_i$ contains a unique quasiminimal, then $d_i \geq 2$. Because the map described in \eqref{eq:newstepmap} has exactly a number of $d_i$ less intervals than the map considered in the previous induction step, it follows that if $p$ denotes the number of periodic domains of $T$, $q$ denotes the number of quasiminimal domains and $d$ denotes the number of intervals of the original GIET $T$, then

$$q < \left\lfloor \frac{d-p}{2} \right\rfloor.$$
\end{proof}
\begin{proof}[Proof of Corollary~\ref{cor:ergodic_bound}] Remark that each periodic domains supports only invariant measures which come from  periodic orbits, and hence are atomic. 
By the  Structure Theorem~\ref{thm:structure} for each of the $q$ quasiminimal domains of $T$, 
$d_i$, $1\leq i\leq q$ are the intervals of the IETs $T_i$ given by the Structure Theorem~\ref{thm:structure} for each of the $q$ quasiminimal domains, 
 Thus, for each $1\leq i\leq q$, 
the ergodic measures of the base g-GIET $T_i$
are given by the pullback of the ergodic measures of the semi-conjugated IET.
From a classical bound for the number of ergodic invariant measures of an  IET of $d_i$ intervals, which has at most $[d_i/2]$ invariant probability ergodic measures, we then get the bound. 
\end{proof}
\begin{rem}\label{rk:genus_bound}    We can make the bound of Corollary~\ref{cor:ergodic_bound} bound sharper: indeed, the sharp bound for number of ergodic invariant measures of an IET is known to be the \textit{genus} of a surface of which the IET is a Poincar{\'e} section; this is fully encoded (and can be reconstructed from)  the permutation of the IET (as proved by Veech \cite{Veech}, see for instance Proposition 9 in \cite{Yoccoz_Course}). 
\end{rem}



\subsection{Further Remarks and questions}
We conclude the paper with a few remarks and open questions.

\subsubsection{Non-uniqueness of the decomposition}\label{sec:notunique} As already remarked in the introduction, the decomposition in Theorem~\ref{thm:decomposition} is \emph{not} unique. This can be clearly seen from the
proof: when building the regions $E^t_i$ (and consequently the corresponding recurrence regions $R^t_i$, which are tower representations over $E^t_i$) which arise from outcome $(2)$ of the basic step of the renormalization process, we chose arbitrarily  an $n_0$ sufficiently large such that $T^{(n_0)}$ is RV-stable. Choosing any $n\geq n_0$, we get a g-GIET $T^{(n)}$ which is also RV-stable.
 If $R^t_i$  contains a Cantor-like quasiminimal, this can lead to a smaller recurrence region.

Note, on the other hand, that the $R^t_i$ which are  domains of trivial recurrence are unique, as well as the domains of non-trivial recurrence on which the restriction is minimal (i.e.~all orbits are dense).


\subsubsection{Recovering a flow decomposition result}\label{sec:surface_bands}
From our combinatorial results about the dynamical decomposition of a GIET, one can recover the decomposition theorem proved by Gutierrez in \cite{gutierrez}. Given a flow on a surface $S$, one can consider a g-GIET  obtained as the first return map on a transversal segment $I$ of the flow and perform the renormalization scheme to obtain the decomposition given by Theorem~\ref{thm:decomposition}. From this, to get a decomposition of the surface, it is enough to saturate the regions $R_i$ by leaves of the flow flow. More precisely,  instead of considering the tower representations $R_i$ (which are a subset of the transversal segment), we can construct a submanifold $S_{R_i}$ of the surface $S$ by flowing each point in $E^t_i$ until its first return\footnote{If $R_i$ is a tower representation of $T_i: E_i^t \to E_i^b$, where $\mathcal{A}_\infty^i$ is the set of letters of $T_i$, then for any $\alpha \in \mathcal{A}_\infty^i$ consider the union of arcs of forward trajectories which connect $I_{\alpha}^t$ with $I_{\alpha}^b$. We consider the union of all these arcs over all $\alpha \in \mathcal{A}_\infty^i$ and denote their closure by $S_{R_i}$.}
to $E^n_i$ and then taking the closure. One can see that $S_{R_i}$ is a compact connected surface with boundary, which is obtained by a \emph{Ribbon bands} type of construction\footnote{The \emph{bands} here are the region foliated by all trajectories which start from $I^t_\alpha$ and land in $I^b_\alpha$, $\alpha\in \mathcal{A}^i_{\infty}$.}.
The  boundary $\partial S_{R_i}$ is a union of saddle connections, i.e.~trajectories which join singularities of the flow.
If $S_{\Omega_i}$ is the quasiminimal on $S$ obtained saturating  $\Omega_i \subset R_i$  with flow trajectories,
then $S_{\Omega_i} \subset S_{R_i}$ and there is no arc of trajectory contained in $S_{R_i} \backslash \partial S_{R_i}$ connecting two points of $\partial(S_{R_i})$. Thus, this decomposition of the surface into subsurfaces with boundaries satisfies the conclusion of Gutierrez's decomposition theorem.

\subsubsection{Extended combinatorics and combinatorial Euler characteristic}\label{sec:extended_classes}
The \emph{extended} combinatorial data (see Remark~\ref{rk:extended_combinatoics} in \S~\ref{sssect:comb}) can be used to code 
the location of gaps. It would be interesting to characterize which extended combinatorics appear performing the renormalization scheme, by analyzing the structure of the analogous of \emph{Rauzy-classes} for extended combinatorics. 

To a given extended combinatorial data, one can associate a \emph{suspension}, which is a surface with boundary (as the band complex described in the previous \S~\ref{sec:surface_bands}) 
by generalizing the construction of zippered rectangles\footnote{
A  topological surface with boundaries which \emph{suspend} a g-GIET can be obtained by gluing $|\mathcal A|$ rectangles $S_\alpha$, which correspond to each label $\alpha \in \mathcal A$. Each of these rectangles has two horizontal sides which are glued to the interval with respect to the g-GIET map.} by Veech \cite{Veech} (see also \cite{Yoccoz_Course}). Both the \emph{genus} $g$ and the \emph{Euler characteristic} $\chi$ of the suspension can be recovered purely from the extended combinatorics\footnote{The genus can be recovered by extending Veech construction and  then $\chi:=	2-2g-b $, where 
$b$ is the number of connected components of gaps, which correspond to the number of boundary components of the suspension).}.  
One can  also show purely combinatorially that this Euler characteristic is preserved by Rauzy--Veech induction. This provides for example a direct combinatorial proof that, if $g_i$, for $1\leq i\leq q$, denotes the genus of each domain of recurrence (as defined in  \S~\ref{sec:ergodic_bound}), we have that  $g_1 + \dots + g_n \le g$. 

\subsubsection{Wandering intervals in quasiminimals} Consider one of the domains of recurrence $R_i$ given by our decomposition (if there exist any) which is a quasiminimal domain (see case (2) of Theorem~\ref{thm:decomposition}). We claim that from the combinatorial rotation number it is not possible to distinguish whether $T$ restricted to $R_i$ is \emph{minimal}, or whether $R_i$ contains a Cantor-like quasiminimal. By the Structure Theorem~\ref{thm:structure}, $R_i$ is  a tower representation over a g-GIET $T_i: E_i^t \to E_i^b$ for which all letters win infinitely often. The generalization of Poincar{\'e}-Yoccoz Theorem then implies that $T_i$ is semi-conjugated to a minimal IET on the same number of intervals. The semi-conjugacy is a topological  conjugacy if and only if $T_i$ has no wandering intervals. In this case, $T$ restricted to $R_i$ is minimal (i.e.~all orbits are dense in $R_i$). Otherwise, the quasiminimal is a Cantor set and the gaps are wandering intervals.

While the combinatorial rotation number cannot distinguish a wandering interval from an orbit, we observe that, algorithmically, if one e.g.~performs the RV algorithm numerically, one can recognize the presence of wandering intervals: indeed, a tower contains a wandering interval if and only if there are floors whose width does not go to zero as we continue iterating RV induction. Hence in the limit it is possible to identify the presence of wandering intervals.

\subsubsection{Pure quasiminimals} 
As mentioned in Section \S~\ref{sec:flowsintro}, Levitt introduced the terminology \emph{pure quasiminimal} to denote the quasiminimals which are topologically either attractors or repellors. One can characterize pure quasiminimals using the \textit{direction} of their wandering intervals: a quasiminimal of Cherry-type which has only \emph{forward} (or only \emph{backward}) wandering intervals is \emph{pure}. On the other hand, if there are both forward and backward wandering intervals (as in the AIETs built by Marmi-Moussa-Yoccoz, see Chapter \S~\ref{sec:flowsintro}), the quasiminimal cannot be pure\footnote{One can show that the presence of a backward wandering interval $I$ makes it impossible for the quasiminimal $\Omega$ to be a topological attractor (see \S \ref{sec:attractorsrepulsors} for the definition of attractor) or vice versa, since any open neighbourhood $U$ of the quasiminimal must intersect all backward iterates of $I$ and hence cannot under forward iterates of $T$ accumulate to $\Omega$.}.

Purity has an importance in the study of our  decompositions, for example for the following reason: as remarked in \S~\ref{sec:notunique}, the quasiminimal domains in the decomposition given by Theorem~\ref{thm:decomposition} are not in general unique.  In the case when the quasiminimal domain contains a pure quasiminimal, one can find a sequence $R^{(n), t}_i$ for $n\geq n_0$ of nested regions whose intersection is exactly the quasiminimal.

Another importance of purity lies in the fact that it allows us to \textit{regularize} a g-GIET: if $R_i$ is a quasiminimal domain of $T$ where the quasiminimal is pure and an attractor (resp. repellor) it follows from the work of Gutierrez in \cite{gutierrez} 
that $T$ restricted to $R_i$ is conjugated to a g-AIET with arbitrary contracting (resp. dilating) factors\footnote{Reinterpreting the result of Gutierrez, written in the language of foliations, in the language of IETs, this can be seen as follows. 
Using 
Theorem ~\ref{thm:PoincareYoccoz}, $T$ restricted to $R_i$ is semi-conjugated to a minimal IET $\tilde{T}$ with the same rotation number, where wandering intervals are \textit{blown down} to points. We can then choose arbitrary contracting (dilating) factors and again \textit{blow up} these points to wandering intervals in (as in \cite{CAMELIER_GUTIERREZ_1997} or \cite{bressaud_persistence_2010}) 
in order to obtain a g-AIET with the chosen contracting (dilating) factors. This works as long as all factors are contracting (dilating), since this directly induces directly finiteness of the wandering intervals. The g-AIET obtained in this way is then conjugated to the original g-GIET, since both share the same rotation number and the same wandering intervals (meaning they have the same order under iteration of the map).}. In particular, $T$ restricted to $R_i$ is conjugated to a $\mathcal{C}^\infty$ g-GIET.

As a tool to study purity, it may be helpful to use 
  \emph{extended} combinatorics (see Remark~\ref{rk:extended_combinatoics})  and their 
  their Rauzy-classes (see \S~\ref{sec:extended_classes}), since the extended combinatorial datum 
  contains information on the location and number of gaps, so that one can in principle read from it if there are gaps only on top or only on bottom, and hence if the g-IET is pure. 

\subsubsection{Stopping times for the renormalization scheme}
Our renormalization scheme consists of applying RV induction until one can identify the first (\emph{right-most}) region of recurrence (either a periodic or quasiminimal) and then \emph{restarting} RV induction by recursion on a g-GIET with one less interval, obtained by restricting the previous one on a suitable complementary region. This scheme, although algorithmic \emph{in flavour}, is devised to give a \emph{theoretical} proof of the existence of a decomposition with the desired properties.

While RV induction can be easily implemented \emph{numerically}, there are at least two aspects in our decomposition scheme which are not algorithmically implementable. While if the right-most quasiminimal is a periodic domain, it  can be detected in finite time (see below), if the induction accumulates at a recurrent orbit closure (i.e.~the g-GIET is in (2) of the basic step of the scheme, see \S~\ref{sec:proofs}), then it is not clear if it is possible to introduce a \emph{stopping condition} that allows to detect the quasiminimal in finite time. Even if this was possible, though, the combinatorial rotation number is infinite for a region which is a tower representation over a g-GIET which is (semi-)conjugated to a minimal IET, so one cannot compute it in a finite number of steps of RV induction.

For periodic domains,  a simple \emph{stopping condition} can be given (to \emph{stop} the search for quasiminimals during a step of the scheme and restart RV induction). A periodic domain indeed is detected when the induction detects two singular orbits, or when in case (2) one has a stable g-GIET with $d=1$. In both of these cases, we reach a step of the induction when $\alpha_t=\alpha_b=\alpha$, i.e.~the last intervals of $I^t$ and $I^b$ have the same letter (in case (1) they have the same lenghts, while in the latter case one is contained in the other). Here one can \emph{stop} the RV induction run for this step (since nested intervals imply the presence of a fixed point).

One may hope that there was a similar stopping condition for quasiminimal domains, for example if one could detect an inclusion between the \emph{union} of the top  intervals $I^t_\alpha$ with $\alpha\in\mathcal{A}_\infty$ and the union of the bottom intervals  $I^b_\alpha$ with $\alpha\in\mathcal{A}_\infty$. A simple counterexample shows that this is not the case: one can for example take an AIET with a wandering interval and add a continuity interval to the AIET (marking the endpoints of the wandering interval as \emph{fake} discontinuities of the g-GIET).  Then, the interval of continuity which is in fact a wandering interval is always loosing, but there is no inclusion between the remaining unions of top and bottom intervals.
On the other hand, if a quasiminimal is \emph{pure}, it may be possible to show that it can be detected with a finite-time stopping condition. We hope to come back to this question in further work.

\subsection*{Acknowledgements}
The authors would like to thank Selim Ghazouani and Michael Bromberg for many inspiring discussions, especially at the very early stages of this project.  S.~S.\ is currently supported by the SNSF grant Number $200021$\textunderscore$188617$. 
 C.~U.\ acknowledges the SNSF Consolidator Grant TMCG $2$\textunderscore$213663$ 
 as well as the SNSF Grant Number $200021$\textunderscore$188617$. 

\newpage
\bibliographystyle{alpha}  
\bibliography{references}

\end{document}